\documentclass[12pt]{amsart}
\usepackage[all]{xy}
\usepackage{amssymb}
\usepackage{amsmath}
\usepackage{mathrsfs}
\usepackage{amsfonts}
\usepackage{amsmath,amsfonts,amssymb}
\usepackage{cases}
\usepackage{indentfirst}
\usepackage{ifthen}
\usepackage{amsthm}
\usepackage{graphicx}
\usepackage{epstopdf}
\usepackage{changepage}
\usepackage{geometry}
\usepackage{bbm}
\usepackage{color}
\usepackage[normalem]{ulem}
\usepackage{mathdots}
\usepackage{stmaryrd}
\usepackage{hyperref}
\usepackage{tikz-cd} 
\usepackage[framemethod=tikz]{mdframed}
\usepackage{lipsum}
\usepackage{multirow}
\usepackage{multicol}
\usepackage{bm}
\numberwithin{equation}{subsection}

\newcommand{\RR}{\mathbb R}
\newcommand{\CC}{\mathbb C}

\newcommand{\ZZ}{\mathbb Z}

\newcommand{\bP}{\mathbf P}
\newcommand{\bT}{\mathbf T}
\newcommand{\bB}{\mathbf B}
\newcommand{\bH}{\mathbf H}
\newcommand{\bG}{\mathbf G}

\newcommand{\bM}{\mathbf M}
\newcommand{\bN}{\mathbf N}

\newcommand{\bU}{\mathbf U}

\newcommand{\bZ}{\mathbf Z}

\newcommand{\cF}{\mathcal F}

\newcommand{\cH}{\mathcal H}

\newcommand{\cO}{\mathcal O}

\newcommand{\cS}{\mathcal S}

\newcommand{\fp}{\mathfrak p}

\newcommand{\SL}{\mathrm{SL}}

\newcommand{\PGL}{\mathrm{PGL}}
\newcommand{\GL}{\mathrm{GL}}

\newcommand{\Spin}{\mathrm{Spin}}

\newcommand{\rG}{\mathrm{G}}

\newcommand{\rE}{\mathrm{E}}

\newcommand{\GG}{\mathbb G}
\newcommand{\PP}{\mathbb P}

\newcommand{\diag}{\mathrm{diag}}

\newcommand{\Hom}{\operatorname{Hom}}
\newcommand{\Stab}{\operatorname{Stab}}

\newcommand{\End}{\operatorname{End}}
\newcommand{\der}{\mathrm{der}}

\newcommand{\Irr}{\mathrm{Irr}}

\newcommand{\ind}{\mathrm{ind}}

\newtheorem{theorem}{Theorem}[section]

\newtheorem{proposition}{Proposition}[section]
\newtheorem{lemma}{Lemma}[section]
\newtheorem{corollary}{Corollary}[section]

\newtheorem*{remark}{Remark}
\theoremstyle{remark}

\usepackage[english]{babel}

\hypersetup{
  colorlinks=true,
  linkcolor=blue,
  citecolor=blue,
  urlcolor=blue,
  linktoc=all,
  pdftitle={A variation on local trilinear form},
  pdfauthor={Yiyang Wang}
}

\begin{document}

\title{A variation on local trilinear form}
\author{Yiyang Wang}
\address{Department of Mathematics, Kyoto University, Kitashirakawa Oiwake-cho, Sakyo-ku, Kyoto 606-8502, Japan}
\email{wang.yiyang.67u@st.kyoto-u.ac.jp}

\begin{abstract}
\mathversion{normal}
Let $F$ be a $p$-adic field and $M$ the subgroup of $\GL_2^3(F)$ consisting of triples with equal determinants. For each unitary principal series $L$-packet of $M$ whose central character is trivial on the diagonal $F^\times$, we determine its unique member distinguished by the diagonal $\GL_2(F)$.
\end{abstract}

\maketitle
\addtocontents{toc}{\setcounter{tocdepth}{1}} %only sections (no subsections) in the contents
\tableofcontents

\section{Introduction}\label{sec:introduction}
% ===== Begin 01_intro.tex =====
\subsection{Motivation}\label{subsec:motivation}
Let $F$ be a $p$-adic field and $\bG=\GL_2$ over $F$. We set $\bM^{\#}:=\bG\times \bG\times \bG =\GL_2^3$,
\[
\bM=(\GL_2^3)^{\det}:=\{(g_1,g_2,g_3)\in \GL_2^3\mid \det(g_1)=\det(g_2)=\det(g_3)\}\subset \bM^{\#},
\]
and $\bG^\Delta$ the diagonal subgroup of $\bM$. Let $\bm{\pi}^{\#}\in \Irr(M^{\#})$ be an irreducible smooth representation of $M^{\#}=\bM^{\#}(F)$. From the work of Tadi\'c \cite{tadic}, we know that the restriction $\bm{\pi}:=\bm{\pi}^{\#}|_{M}$ to $M =\bM(F)$ admits a semisimple decomposition $$ \bm{\pi}= \bm{\sigma}_1\oplus \cdots \oplus \bm{\sigma}_n,\qquad \bm{\sigma}_i\in \Irr(M)$$ which is of finite length and multiplicity free. Moreover, each irreducible representation of $M$ can be embedded into such a restriction and the set of constituents $\{\bm{\sigma}_1,\cdots,\bm{\sigma}_n\}$ of $\bm{\pi}=\bm{\pi}^{\#}|_M$ forms an L-packet of $M$ (see \S~\ref{sec:representations} for details). Suppose the central character of $\bm{\pi}^{\#}$ is further trivial on the diagonal center $Z^\Delta\simeq F^\times \subset G^\Delta$, then by the multiplicity one theorem of D. Prasad \cite[Theorem 1.1]{prasad}, we have 
\[
\dim \Hom_{G^\Delta}(\bm{\pi}^{\#}, \mathbf{1})\leq 1.
\]
Since $G^\Delta$ is also a subgroup of $M$, we have 
\[
\Hom_{G^\Delta}(\bm{\pi}, \mathbf{1}) = \bigoplus_{i=1}^n \Hom_{G^\Delta}(\bm{\sigma}_i,\mathbf{1}),
\]
which leads to the following natural problem.
\begin{itemize}
    \item \textbf{Problem 1}. When $\dim\Hom_{G^\Delta}(\bm{\pi}^{\#},\mathbf{1}) = 1$, which $\Hom_{G^\Delta}(\bm{\sigma}_i,\mathbf{1})$ is nonzero?
\end{itemize}

The purpose of this paper is to deal with this question when $\bm{\pi}^{\#} \in \Irr(M^{\#})$ is an irreducible principal series induced from a unitary character. Before explaining the results, let us briefly explain our motivation for this question.

Let $\bG_E=\Spin_8$ be the connected split simple group over $F$ that is simply-connected and of type $\mathrm{D}_4$. Up to conjugacy, $\bG_E$ has a unique Heisenberg parabolic $\bP_E = \bM_E\bN_E$, whose Levi $\bM_E$ is isomorphic to $\bM$ as an $F$-group. We fix an isomorphism $\bM_E\simeq \bM = (\GL_2^3)^{\det}$ such that the modulus character $\delta_{P_E}|_{M_E}$ is identified with $|\det|^5: M \longrightarrow \RR^{\times}_{>0}$, where $$\det: M \ni (g_1,g_2,g_3)\longmapsto \det(g_1)\in F^\times. $$
On the other hand, the split exceptional group $\bH =\rG_2$ is naturally embedded into $\bG_E$ via the triality realization, of which the detail is not needed here. There is a dual pair $\SL_2^3/\mu_2^\Delta\times \bG_E$ in the adjoint split $\rE_7$, where the $F$-group $\SL_2^3/\mu_2^\Delta$ is just isomorphic to $\bM/\GG_m^\Delta$. In the study of the theta correspondence for this dual pair, we are naturally led to the following distinction problem.
\begin{itemize}
    \item \textbf{Problem 2}. Let $\bm{\sigma}\in \Irr(M_E)$ be an irreducible tempered representation of $M_E$ whose central character $\omega_{\bm{\sigma}}$ is trivial on the diagonal $F^\times$. Given $s>0$, we denote by $I_{P_E}(\bm{\sigma},s):=I_{P_E}^{G_E}(\bm{\sigma}\otimes |\det|^{s})$ the corresponding standard module, and $J_{P_E}(\bm{\sigma},s)$ its Langlands quotient. Determine
    \[
    \dim \Hom_{H}(I_{P_E}(\bm{\sigma},s), \mathbf{1})\qquad \text{and}\qquad \dim \Hom_{H}(J_{P_E}(\bm{\sigma},s), \mathbf{1}).
    \]
\end{itemize}
The standard approach to deal with $\Hom_{H}(I_{P_E}(\bm{\sigma},s), \mathbf{1})$ is the Mackey theory (see \cite[\S 1]{blanc-delorme}). It turns out that there are five double cosets in $H\backslash G_E/P_E =\rG_2(F)\backslash G_E/P_E$, and we take a set of representatives $\{g_i\}_{i=1}^5$. For $1\leq i\leq 5$, we set $P_i:= H\cap (g_i P_E g_i^{-1})$ and denote by $^{g_i}\bm{\sigma}_s$ the representation of $g_i P_E g_i^{-1}$ given by
\[
^{g_i}\bm{\sigma}_s(h):=(\bm{\sigma}\otimes |\det|^s)(g_i^{-1}hg_i),\qquad h\in P_i.
\]
Then the Mackey filtration of $I_{P_E}(\bm{\sigma},s)|_{H}$ has constituents 
\[
X_i(s):=\mathrm{ind}_{P_i}^{H}\left(({}^{g_i}\bm{\sigma}_s\delta_{P_E}^{1/2})|_{P_i}\right),\qquad 1\leq i\leq 5,
\]
where $\mathrm{ind}(-)$ denotes the unnormalized compact induction. 

The unique closed orbit among the five orbits is $H 1 P_E$, and we take $g_1 = 1$ so that $P_1 = P_E\cap H = M_1 N_1$ is the Heisenberg parabolic of $H=\rG_2(F)$. The pair $(\bM_E,\bM_1)$ is isomorphic to $(\bM,\bG^\Delta)\simeq ((\GL_2^3)^{\det},\GL_2^\Delta)$, and with the computational facts
\[
\delta_{P_E} = |\det|^5,\qquad \delta_{P_E}|_{M_1} = \delta_{P_1}^{5/3},
\]
we can then get 
\begin{equation}
    \label{eq:closed-orbit-hom}\Hom_{H}(X_1(s),\mathbf{1})=\Hom_{M_1}\left( (\bm{\sigma} \delta_{P_1}^{\frac{2s-1}{6}}),\mathbf{1}\right)\simeq \Hom_{M_1}(\bm{\sigma},\delta_{P_1}^{\frac{1-2s}{6}}).
\end{equation}
Based on an analysis of the orbits and exponents, one can in fact verify that
\begin{equation}
    \label{eq:non-closed-orbit-vanishing} \Hom_{H}(X_i(s),\mathbf{1}),~\mathrm{Ext}^1_{H}(X_i(s),\mathbf{1})=0,\qquad 2\leq i\leq 5
\end{equation} at least for $s>0$ belonging to a dense open set $U\subset (0,+\infty)$ including $s=1/2$. We would then get 
\[
\Hom_H(I_{P_E}(\bm{\sigma},s),\mathbf{1})\simeq\begin{cases}
    0,& s\in U-\{1/2\},\\
    \Hom_{M_1}(\bm{\sigma},\mathbf{1}),& s= 1/2,
\end{cases}
\]
 
since we assume that $\omega_{\bm{\sigma}}$ is trivial on the diagonal $F^\times$. Thus the study of Problem 2 can be ``generically" reduced to Problem 1.

\begin{remark}
    The analysis of the orbits and  the vanishing result \eqref{eq:non-closed-orbit-vanishing} will be included in a subsequent paper. We note that the classification of the five orbits of $\rG_2(F)\backslash G_E/P_E$ was given in the pioneering work \cite{jiang}, and also explained in \cite{pollack-wan-zydor}.
\end{remark}

\subsection{Main results}\label{subsec:main-results}
Let $\bB=\bT\bU$ be the standard upper triangular Borel of $\bG \simeq \GL_2$, and $\bB_M^{\#} = \bT_M^{\#} \bU_M^{\#}$ (resp. $\bB_M = \bT_M \bU_M$) the standard upper triangular Borel of $\bM^{\#}$ (resp. $\bM$). Given characters $\mu,\nu: F^\times \longrightarrow \CC^\times$, we denote by $\mu \times \nu$ the normalized induction $I_B^G(\mu\boxtimes \nu)$. For $1\leq i\leq 3$, let $\mu_i ,\nu_i: F^\times \longrightarrow \CC^1$ be unitary characters, and we set 
\[
\bm{\pi}^{\#}=\boxtimes_{i=1}^3(\mu_i\times\nu_i),\qquad
\bm{\pi}=\bm{\pi}^{\#}|_M.
\]
In this context we denote the inducing characters from $T_M^{\#}$ and $T_M$ by 
\[ \bm{\chi}^{\#} = (\mu_1,\nu_1;\mu_2,\nu_2;\mu_3,\nu_3),\qquad \bm{\chi}:=\bm{\chi}^{\#}|_{T_M}.\]
We assume further $\prod_{i=1}^3\mu_i\nu_i=\mathbf{1}$, i.e. the central character $\omega_{\bm{\pi}^{\#}}$ is trivial on the diagonal center, and write $\xi_i=\mu_i\nu_i^{-1}$. 

Let $R_{\bm{\chi}}$ be the R-group for $\bm{\chi}$, and we fix the standard Whittaker datum $\mathfrak{w}_M$ of $M$ (see \S~\ref{sec:representations} for detail). Then the theory of R-groups gives a labeling
\[
\bm{\pi}\simeq\bigoplus_{\kappa\in\Irr(R_{\bm{\chi}})}\bm{\sigma}_\kappa,
\]
with $\bm{\sigma}_{\mathbf{1}}$ the unique $\mathfrak{w}_M$-generic member. It turns out that the order of $R_{\bm{\chi}}$ can only be $4,2,1$, and we can easily classify the cases when $|R_{\bm{\chi}}|>1$, as follows.
\begin{enumerate}
    \item[(a)] If $|R_{\bm{\chi}}|=4$, then $\xi_1 = \xi_2 = \xi_3$ is a nontrivial quadratic character. (We call this the case of \textbf{order 4}.)
    \item[(b)] If $|R_{\bm{\chi}}|=2$, then exactly one of the following holds.
    \begin{enumerate}
        \item[(1)] Up to permutation $\xi_1 = \xi_2 = \xi$ is a nontrivial quadratic character, and $\xi_3\neq \xi$. (We call this the case of \textbf{order 2, type I}.)
        \item[(2)] The three characters $\xi_1,\xi_2,\xi_3$ are mutually distinct nontrivial quadratic characters, and $\xi_1\xi_2\xi_3 = \mathbf{1}$. (We call this the case of \textbf{order 2, type II}.)
    \end{enumerate}
\end{enumerate}
When $|R_{\bm{\chi}}|=2$, let us denote by $\bm{\sigma}_+$ the unique $\mathfrak{w}_M$-generic member, and $\bm{\sigma}_-$ the other member. (The notation $\bm{\sigma}_{\pm}$ here is identified with $\bm{\sigma}_{\pm 1}$ whenever needed.) Now we can state our results as follows.

\begin{theorem} \label{theorem:main-theorem}
    Notations as above.
    \begin{enumerate}
        \item In the case of order 4, the $G^{\Delta}$-distinguished member is $\bm{\sigma}_{\mathbf{1}}$, i.e. the $\mathfrak{w}_M$-generic one.
        \item In the case of order 2, type I, say $\xi_1 = \xi_2 = \xi$ and $\xi_3\neq \xi$ up to permutation, we put $s:=\xi(-1)$. Then the $G^{\Delta}$-distinguished member is $\bm{\sigma}_s$.
        \item In the case of order 2, type II, we put $\lambda:=\nu_1\nu_2\nu_3$ and $Y=\{\mathbf{1},\xi_1,\xi_2,\xi_3\}$, so that $\lambda^2=\mathbf{1}$ and 
        \[ s=\prod_{\eta\in Y}\varepsilon(\frac12,\lambda\eta,\psi)\in\{\pm1\}.\]
Then the $G^{\Delta}$-distinguished member is $\bm{\sigma}_s$.
    \end{enumerate}
\end{theorem}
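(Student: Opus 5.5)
The plan is to realize the distinguished member explicitly through the open-orbit (Mackey) analysis of $\bm{\pi}^{\#}|_{G^\Delta}$, and to compare it with the $R$-group labeling via intertwining operators.

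\textbf{Step 1: locate the invariant functional.} Consider $B_M\backslash M/G^\Delta$. Since $M/B_M\simeq(\PP^1)^3$ and $G^\Delta$ acts diagonally, $G^\Delta$ has an open orbit consisting of triples of distinct points. Its stabilizer is the center $Z^\Delta$ (three distinct points determine an element of $\PGL_2$). Because $\prod\mu_i\nu_i=\mathbf{1}$, the open-orbit integral
\[
\ell(f)=\int_{Z^\Delta\backslash G^\Delta} f(\eta g)\,dg
\]
gives the $G^\Delta$-invariant functional on $\bm{\pi}^{\#}$, where $\eta$ is a representative of the open orbit. This is the local trilinear form. The integral converges for unitary $\bm\chi$, either by Prasad's analysis or by analytic continuation in an unramified twist. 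By Prasad, $\ell$ spans $\Hom_{G^\Delta}(\bm{\pi}^{\#},\mathbf 1)$, so the task is to decide on which $\bm{\sigma}_\kappa$ the functional $\ell$ is nonzero.

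\textbf{Step 2: decompose via the $R$-group.} Let $R_{\bm\chi}$ act on $\bm\pi$ through normalized intertwining operators $\mathcal A(w)$, normalized so that they fix the $\mathfrak w_M$-Whittaker functional (Shahidi normalization). Then $\bm\sigma_\kappa$ is the $\kappa$-isotypic part. Each $w\in R_{\bm\chi}$ is, in $M^{\#}$, a product of the simple reflections $w_i$ for the $i$ with $\xi_i$ quadratic, composed with a twist by the character of $M^\#/M$ that realizes $w\bm\chi^{\#}=\bm\chi^{\#}\otimes(\chi\circ\det)$. Concretely, $\mathcal A(w)=\bigl(\bigotimes_{i\in S}A_i\bigr)\circ(\text{multiplication by }\chi\circ\det)$, with $A_i:\mu_i\times\nu_i\to\nu_i\times\mu_i$. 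Since $\ell$ is unique up to scalar and $G^\Delta\subset M$, we get $\ell\circ\mathcal A(w)=c(w)\,\ell$ for a character $c$ of $R_{\bm\chi}$. The distinguished member is then $\bm\sigma_{c}$, and the theorem reduces to computing $c(w)$ for generators $w$.

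\textbf{Step 3: compute $c(w)$.} For a single-factor operator $A_i$ we use the known functional equation of the trilinear form on $\GL_2^3$ (Prasad's and Loke's explicit computations). It states that $\ell_{w_i\bm\chi}\circ A_i$ equals $\ell_{\bm\chi}$ times an explicit ratio of local factors: the $\gamma$-factor of $\xi_i$ divided by triple-product factors. The twist by $\chi\circ\det$ identifies $\ell_{w\bm\chi}$ with $\ell_{\bm\chi}$ up to the value $\chi(\det\eta)$ on the open-orbit representative.

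In the order 4 and type I cases, the triple-product factors cancel in pairs. For type I we are left with a product like $\xi(-1)$, which comes from $\varepsilon(1/2,\xi)^2=\xi(-1)$ together with the normalization. For order 4 the resulting character turns out to be trivial on all of $R_{\bm\chi}$, so the generic member is the distinguished one.

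In type II, the remaining $\gamma$-factors combine into the triple product $\varepsilon$-factor $\varepsilon(\tfrac12,\bm\pi^{\#})$. Its Langlands parameter is $\bigoplus$ of eight characters $\mu_1^{a}\mu_2^{b}\mu_3^{c}$ with the obvious choices. These collapse to twice $\bigoplus_{\eta\in Y}\lambda\eta$ up to duals, giving $\prod_{\eta\in Y}\varepsilon(\tfrac12,\lambda\eta,\psi)$. This matches Prasad's dichotomy criterion, and it is the sign that appears in the theorem.

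The main obstacle is Step 3: getting the normalizations exactly right. This includes Whittaker-normalized operators versus the constants in the trilinear functional equation, the contribution $\chi(\det\eta)$ of the twist, and the $\psi$-dependence. Only with all of these fixed does the sign come out exactly as stated rather than up to an unspecified $\pm1$. As a first safeguard, I would check the answer in the unramified-$\lambda$ situation against the $\SL_2$ analogue, and use Prasad's $\varepsilon$-dichotomy between $\GL_2$ and $D^\times$ to fix signs.
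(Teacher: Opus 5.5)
\textbf{There is a genuine gap: the sign, which is the entire content of the theorem, is never computed.}

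Your Steps 1 and 2 set up the same framework as the paper. There is a nonzero $L\in\Hom_{G^\Delta}(I(\bm{\chi}),\mathbf{1})$ and a character $c$ of $R_{\bm{\chi}}$ with $L\circ R(r,\bm{\chi})=c(r)L$. Multiplying by a character of $M^{\#}/M$ becomes invisible after restriction to $M$; this is the paper's $D_{12}$ in Lemma~\ref{lemma:type-i-fixed-tensor-model}.

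Step 3 does not evaluate $c$. You invoke an unspecified ``known functional equation'' for single rank-one operators with unspecified constants, then assert the outcomes (``a product like $\xi(-1)$'', ``turns out to be trivial''). As you admit, with normalizations unfixed the sign is only determined up to $\pm1$.

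\textbf{Order 4.} Nothing you write forces $c$ to be trivial. The paper needs no computation here: the $S_3$-symmetry of Lemma~\ref{lemma:order-four-permutation-operators} permutes the three nontrivial characters of $R_{\bm{\chi}}$ transitively. Alternatively, the type~I computation applied to each $r_{ij}$ would give $\xi(-1)$, which equals $\rho(-1)^2=1$ because $\xi=\rho^2$.

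\textbf{Type I.} The paper avoids global constants altogether. It writes $R(r_{12},\bm{\chi})$ as $A_1A_2$, where $A_\xi^\psi=D_\xi\circ R_\xi$ is an involution. It proves $L^{\#}_\gamma\circ P_{12}=\gamma(-1)L^{\#}_\gamma$ using Prasad's Mackey filtration and the Waldspurger torus functional. Comparing $\gamma=\alpha$ with $\gamma=\xi\alpha$ then gives $L\circ A_1=\xi(-1)L\circ A_2$.

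\textbf{Type II.} Your heuristic here is actually wrong. With the lift $(\xi_1,\mathbf{1};\xi_2,\mathbf{1};\lambda\xi_3,\lambda)$, the triple-product parameter is twice $\bigoplus_{\eta\in Y}\lambda\eta$, so
\[
\varepsilon(\tfrac12,\bm{\pi}^{\#},\psi)=\prod_{\eta\in Y}\varepsilon(\tfrac12,\lambda\eta,\psi)^2=\prod_{\eta\in Y}(\lambda\eta)(-1)=1 .
\]
This is not $s$; the sign $s$ is a square root of the triple-product $\varepsilon$-factor. For the same reason, Prasad's $\GL_2$/$D^\times$ dichotomy only sees $\varepsilon(\tfrac12,\bm{\pi}^{\#})=+1$, which is automatic for principal series. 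It carries no information about which member of the $M$-packet is distinguished, so it cannot be used to fix signs.

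What is needed is an identity producing exactly four $\gamma$-factors. The paper uses the Venkatesh--Wang identity $\cF_\psi^{\otimes3}(K_{\bm{\chi}^{\#}})=c(\bm{\chi}^{\#})K_{w_{123}\bm{\chi}^{\#}}$ for the full $w_{123}$ at once. It checks that $K_{w_{123}\bm{\chi}^{\#}}=K_{\bm{\chi}^{\#}}$, and that $\cF_{\psi^{-1}}$ lifts the Whittaker-normalized operator on the homogeneous model. This gives $c=\bigl(\prod_{\eta\in Y}\gamma(\tfrac12,\lambda\eta,\psi)\bigr)^{-1}=s$.

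A single-reflection route via the $p$-adic star-triangle identity for the kernel on $\PP^1$ is viable in principle. You would still have to carry out that computation with the Whittaker normalization and exhibit the cancellations. As written, Step 3 is a placeholder.
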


First we mention that if all $\mu_i,\nu_i$ are unramified and $|R_{\bm{\chi}}|>1$, then we are either in the case of order 4, or in the case of order 2 and type I, with $\xi(-1) = 1$. We set $K_M:= \GL_2^3(\cO_F)\cap M$ (where $\cO_F$ is the ring of integers of $F$), then we can easily check that the $\mathfrak{w}_M$-generic member is really $K_M$-unramified (see \S~\ref{subsec:supplements}). Thus in this unramified setting the $G^{\Delta}$-distinguished member is exactly the ($K_M$-)unramified one, and this is compatible with D. Prasad's theorem in the unramified case. 

Let us briefly explain how to prove Theorem \ref{theorem:main-theorem}. Let the symmetric group $S_3$ act on $\bM^{\#},\bM$ in the usual way by permuting the three factors. In the case of order 4, it turns out that the packet $\{\bm{\sigma}_{\kappa}\}$ is $S_3$-stable, while the $S_3$-action also preserves the Hom-space $\Hom_{G^\Delta}(\bm{\pi},\mathbf{1})$. Then the $G^{\Delta}$-distinguished member in $\{\bm{\sigma}_{\kappa}\}$ must be fixed by $S_3$, which is just $\bm{\sigma}_{\mathbf{1}}$. The two cases of order 2 are more interesting. In such situations the theory of R-groups gives us a normalized intertwining operator $R(w,\bm{\chi})\in \End_{M}(\bm{\pi})$, which acts on $\bm{\sigma}_{\pm}$ by $\pm 1$. Then we take a nonzero trilinear form $L\in \Hom_{G^{\Delta}}(\bm{\pi},\mathbf{1})$ and determine the constant $c \in \{ \pm 1\}$ such that 
\[ L \circ R(w,\bm{\chi}) = c L.\]
Finally, the $G^{\Delta}$-distinguished constituent is $\bm{\sigma}_c$.

The evaluation of the constant $c$ is a pleasant computational problem related to trilinear forms. In the type I case, we use D. Prasad's Mackey filtration  together with an explicit formula for the torus period to compute it (see \S~\ref{subsec:open-orbit-flip}).  Somewhat surprising to the author, the constant $c$ in the type II case has essentially been computed by Venkatesh--Wang in another context in \cite[Lemma 11.4.2]{venkatesh-wang} (while their motivation has nothing to do with our problem). Their method is purely analytic, which expresses the corresponding intertwining operator $R(w,\bm{\chi})$ as Fourier transform on a specific model for principal series. The reader may compare our method for type I case with this beautiful analytic approach. In \S~\ref{sec:type-ii} we match the result of \cite[Lemma 11.4.2]{venkatesh-wang} with our setting, and the technicality is mainly to extend their formulae for three $\PGL_2$ to $\bM$ and the switch between several different models.

We close this subsection with a brief summary of the structure of this paper.
\begin{itemize}
    \item In \S~\ref{subsec:notation}, we list some general notations used throughout the paper.
    \item In \S~\ref{sec:representations}, we deal with the representation theory of the group $M$, focusing on the restriction of irreducible principal series, and the R-group.
    \item In \S~\ref{sec:results}, we restate the main results with some remarks, and also prove the order 4 case. At the end of this section we add some additional results; they are not quite related to the main Theorem, but will be used in a subsequent paper. 
    \item In \S~\ref{sec:type-i} (resp. \S~\ref{sec:type-ii}), we complete the proof of the case of order 2, type I (resp. type II).
\end{itemize}

\subsection*{Acknowledgements} 

The author would like to thank his advisor A. Ichino for his constant support. This work was supported by JSPS KAKENHI Grant Number JP25KJ1457.

\subsection{Some general notations}\label{subsec:notation}
Here are some notations for representation theory of $p$-adic groups.
\begin{itemize}
\item We fix a $p$-adic field $F$, with $\mathcal O_F$ its ring of integers, and $\mathfrak p_F\subset \mathcal O_F$ the maximal ideal with a fixed uniformizer $\varpi_F$. The absolute value $|\cdot|$ on $F$ is normalized by $|\varpi_F|=q_F^{-1}$, where $q_F=|\cO_F/\fp_F|$.
\item  We fix a nontrivial unitary additive character $\psi:F\longrightarrow \CC^1$. We then denote by $dx$ the additive Haar measure on $F$ that is self-dual with respect to $\psi$, and set $d^\times x = dx/|x|$, which is a multiplicative Haar measure on $F^\times$. 

We usually use the symbol $\mathbf{1}$ to denote the trivial character.
\item An algebraic group over $F$ will be denoted by a boldfaced letter (such as $\bG,\bH,\bM$), and their groups of $F$-points will be denoted by the corresponding ``usual" letter (such as $G,H,M$).
\item All representations in this paper are complex smooth representations. For a $p$-adic group $H$, we denote by $\Irr(H)$ the set of (isomorphism classes of) irreducible smooth representations of $H$, and by
\[
X(H):=\Hom_{\mathrm{sm}}(H,\CC^\times)
\]
the group of its smooth characters. If $H_1$ is a closed normal subgroup of $H$, we shall identify 
\[X(H/H_1) = \{\eta \in X(H)\mid \eta|_{H_1} = \mathbf{1}\}\]
as a subgroup of $X(H)$.

\item For a representation $(\tau,V_\tau)$, we denote its contragredient by $(\tau^\vee,V_{\tau}^\vee)$, and its central character by $\omega_\tau$ whenever it does exist. If $\tau$ is a representation of $\GL_2(F)$ and $\eta:F^\times\longrightarrow \CC^\times$ a character, we use the simplified notation $\tau\otimes \eta:= \tau\otimes (\eta \circ \det)$ for the twist by $\eta$. 

For a representation $\tau_i$ of a group $H_i$ ($i=1,2$), we denote by $\tau_1\boxtimes \tau_2$ the external tensor product representation of $H_1\times H_2$.
\item For a $p$-adic group $H$ with a closed subgroup $P$ and a representation $(\tau,V_{\tau})$ of $P$, we denote by $\ind_P^H(\tau)$ the unnormalized compact induction, whose underlying space $\ind_P^H(V_{\tau})$ is the set of smooth functions $f:H\to V_\tau$ with $f(ph)=\tau(p)f(h)$ for $p\in P, h\in H$, and the support of $f$ is compact modulo $P$. Then the normalized compact induction will be denoted by $$I_P^H(\tau):=\mathrm{ind}_P^H(\delta_P^{1/2}\tau),$$ where $\delta_P: P\longrightarrow \RR_{>0}^\times$ is the modulus character of $P$.
\end{itemize}
Whenever convenient, we shall not distinguish a representation from its underlying space. For example, it is usually convenient to write $\Hom_H(\tau,\mathbf{1})$ instead of $\Hom_H(V_\tau,\CC)$, or $f\in I_P^H(\tau)$ instead of $f\in I_P^H(V_\tau)$. But we remark that this is not always the case, because we will have to switch between different models in many contexts.

We next fix some notations for groups and their elements or characters.
\begin{itemize} 
\item We fix $\bG=\GL_2$, with the standard upper triangular Borel subgroup $\bB=\bT\bU$. We use the notations
\[
u(x)=\begin{bmatrix}1&x\\0&1\end{bmatrix},\qquad
\overline u(x)=\begin{bmatrix}1&0\\x&1\end{bmatrix},\qquad
\dot{w}_0=\begin{bmatrix}0&1\\-1&0\end{bmatrix},\qquad s_0=\begin{bmatrix}0&1\\1&0\end{bmatrix}.
\]
\item We set
\[
\bM^{\#}=\bG^3,\qquad
\bM=(\GL_2^3)^{\det}:=\{(g_1,g_2,g_3)\in\GL_2^3:\det(g_1)=\det(g_2)=\det(g_3)\}.
\]
The upper triangular Borel subgroups of $\bM^{\#}$ and $\bM$ are denoted by
\[
\bB_M^{\#}=\bT_M^{\#}\bU_M=\bB^3,
\qquad
\bB_M=\bT_M\bU_M=\bB_M^{\#}\cap\bM,
\]
 The diagonal copy of $\bG$ in $\bM$ and its center are denoted by
\[
\bG^\Delta=\{(g,g,g):g\in\bG\},\qquad
\bZ^\Delta=\{(zI_2,zI_2,zI_2):z\in\GG_m\}.
\]

\end{itemize}

Finally, several words for notations of parabolic induction. As we have mentioned, in many contexts we want to switch between different models of the same induced representation. Given characters $\mu,\nu: F^\times \longrightarrow \CC^\times$, we denote by $(I_{\mu,\nu},V_{\mu,\nu})$ the ``usual" induced model $I_B^G(\mu \boxtimes \nu)$, and we use $\mu \times \nu$ to indicate the isomorphism class of $I_B^G(\mu \boxtimes \nu)$. Given a character $\bm\chi^{\#}$ (resp. $\bm\chi$) of $T_M^{\#}$ (resp. $T_M$), we write 
\[
I(\bm\chi^{\#}):=I_{B_M^{\#}}^{M^{\#}}(\bm\chi^{\#}),
\qquad
I(\bm\chi):=I_{B_M}^{M}(\bm\chi)
\]
for the corresponding induced models. If $\bm{\chi}^{\#} = (\mu_1,\nu_1;\mu_2,\nu_2;\mu_3,\nu_3)$, then $I(\bm\chi^{\#})= \boxtimes_{i=1}^3 I_{\mu_i , \nu_i}$ with its underlying space $\otimes_{i=1}^3 V_{\mu_i,\nu_i}$.
% ===== End 01_intro.tex =====

\section{Representations of $M$}\label{sec:representations}
% ===== Begin 02_repn of M.tex =====

The purpose of this subsection is to deal with irreducible representations of $M$ and their L-parameters. Following \S~\ref{subsec:notation} we write $\bM^{\#} = \GL_2^3$, $\bM = (\GL_2^3)^{\det}$, and there is an exact sequence 
\[
1\longrightarrow \bM \longrightarrow \bM^{\#} \overset{\delta}{\longrightarrow} \mathbb{G}_m^2\longrightarrow 1
\]
of algebraic groups, with 
\[
\delta:\GL_2^3\longrightarrow \GG_m^2,\qquad (g_1,g_2,g_3)\mapsto (\det (g_1 g_2^{-1}),\det(g_2 g_3^{-1})).
\]
Note that on the level of $F$-points we also have the exact sequence 
\[
1\longrightarrow M \longrightarrow M^{\#} \overset{\delta }{\longrightarrow} (F^\times)^2\longrightarrow 1,
\]
so that we can identify $(\bM^{\#}/\bM)(F)= M^{\#}/M$.

On the dual side we have an exact sequence
\[
1\longrightarrow Z'\longrightarrow \widehat{M^{\#}} \longrightarrow \widehat{M}\longrightarrow 1,
\]
where after identifying $\widehat{M^{\#}}=\GL_2^3(\CC)$ the central torus $Z'$ can be written as
\[
Z' = \{(z_1 I_2, z_2 I_2, z_3 I_2) \mid z_k \in \CC^\times \text{ and }z_1 z_2 z_3 = 1\},
\]
and $\widehat{M}\simeq \widehat{M^{\#}}/Z'$ as complex groups. We regard $Z'$ as the dual group of the $F$-group $\bM^{\#}/\bM\simeq \GG_m^2$, and then by local class field theory we have an isomorphism 
\[
\Hom(L_F,Z')\simeq
X(M^{\#}/M):=\{\chi \in X(M^{\#}) \mid \chi|_{M}=\mathbf{1}\}.
\]

We start from a brief review of some classical results of Tadi\'c \cite[\S 2]{tadic}. 

\subsection{Review of Tadi\'c's results}\label{subsec:tadic}

Let $\bH$ temporarily be a general connected reductive group over $F$, with derived subgroup $\bH^{\der}$. Suppose $\bH_1$ is an algebraic subgroup of $\bH$ containing $\bH^{\der}$, and in particular $H_1=\bH_1(F)$ is a normal closed subgroup of $H=\bH(F)$ with abelian quotient. The group $H$ acts on $\Irr(H_1)$ in a natural way (say from the right): for $x\in H$, $(\sigma, V) \in \Irr(H_1)$, the action gives $(\sigma^x, V) \in \Irr(H_1)$ defined by
\[
\sigma^x(g):=\sigma(x g x^{-1}),\quad g\in H_1.
\]
This action factors through $H/H_1Z(H)$, which is a finite abelian group. The work of Tadi\'c provides us with the following results.
\begin{enumerate}
\item For any $\pi \in \Irr(H)$, the restriction $\pi|_{H_1}$ is semi-simple of finite length \cite[Lemma 2.1]{tadic}. 
\item Denote by $\cO_{H_1}(\pi)$ the finite subset of $\Irr(H_1)$ consisting of distinct irreducible factors of $\pi|_{H_1}$, then $\cO_{H_1}(\pi)$ forms a single $H$-orbit in $\Irr(H_1)$ and 
\[
\pi|_{H_1}\simeq m_{H_1}(\pi) \cdot \left(\bigoplus_{\tau \in \cO_{H_1}(\pi)}\tau\right)
\]
for some $m_{H_1}(\pi) \in \ZZ_{\geq 1}$.
\item For any $\tau\in \Irr(H_1)$, there exists $\pi \in \Irr(H)$ with $\tau\subset \pi|_{H_1}$. If the central character $\omega_\tau$ of $\tau$ is unitary, then there exists such $\pi$ with unitary central character $\omega_\pi$ \cite[Prop. 2.2]{tadic}.
\item For $\pi_1,\pi_2\in \Irr(H)$, put 
\[m_{H_1}(\pi_1,\pi_2):=\left|\{\eta \in X(H/H_1)\mid \pi_1\otimes \eta \simeq \pi_2\in \Irr(H)\}\right|,
\]
then we have \cite[Prop. 2.4]{tadic}
\begin{equation}\label{eq:tadic-dimension-formula}
m_{H_1}(\pi_1,\pi_2) = \dim \Hom_{H_1}(\pi_1,\pi_2),
\end{equation}
hence the following are equivalent \cite[Corollary 2.5]{tadic}: 
\begin{enumerate}
    \item $\pi_1\otimes \eta\simeq \pi_2$ for some $\eta \in X(H/H_1)$;
    \item $\cO_{H_1}(\pi_1)\cap \cO_{H_1}(\pi_2)\neq \varnothing$;
    \item $\cO_{H_1}(\pi_1)= \cO_{H_1}(\pi_2)$.
\end{enumerate}
\end{enumerate}
In summary, the map
\[
\Irr(H)\longrightarrow \Irr(H_1)/H,\quad \pi \longmapsto \cO_{H_1}(\pi).
\]
induces a bijection 
\[\Irr(H)/X(H/H_1)\overset{\sim}{\longrightarrow} \Irr(H_1)/H.\]

We highlight a special case of \eqref{eq:tadic-dimension-formula}: if we take $\pi_1 = \pi_2 = \pi$ and if $\pi|_{H_1}$ is multiplicity free, then we get
\begin{equation}\label{eq:multiplicity-free-length}
    \text{length of }\pi|_{H_1}=\left|\{\eta \in X(H/H_1)\mid \pi\otimes \eta\simeq \pi \}\right|.
\end{equation}
 In general for $\pi \in \Irr(H)$ the restriction $\pi|_{H_1}$ may not be multiplicity free, but for $(\bH,\bH_1) = (\GL_n,\SL_n)$, a fundamental result of Tadi\'c ensures that this restriction is always multiplicity free \cite[\S 1]{tadic}. 

\subsection{Representations of $\bM$ and LLC}\label{subsec:llc}

Next we apply the above results to $(\bH,\bH_1) = (\bM^{\#},\bM)$. Since $\bM^{\#} \simeq \GL_2^3$ and $\bM \simeq (\GL_2^3)^{\det}\supset \SL_2^3$, for $\bm{\pi}^{\#} \in \Irr(M^{\#})$ the restriction $\bm{\pi}^{\#}|_{M}$ is always multiplicity free by Tadi\'c's result. Recall our notation
\[
X(M^{\#}/M):=\{\bm{\eta}^{\#} \in X(M^{\#}) \mid \bm{\eta}^{\#}|_{M}=\mathbf{1}\}\subset X(M^{\#}).
\]
For $\bm{\pi}^{\#} \in \Irr(M^{\#})$, we introduce the notation
\begin{equation}\label{eq:self-twist-group}
X_M(\bm{\pi}^{\#}):=\{\bm{\eta}^{\#} \in X(M^{\#}/M)\mid \bm{\pi}^{\#} \otimes \bm{\eta}^{\#} \simeq \bm{\pi}^{\#}\}.
\end{equation}
 Note that if we denote by $\phi_{\bm{\pi}^{\#}} \in \Phi(\bM^{\#})$ the L-parameter of $\bm{\pi}^{\#}$, then $X_M(\bm{\pi}^{\#})\subset X(M^{\#}/M)$ corresponds to the subgroup 
 \begin{equation}
    \{\bm{\eta}^{\#} \in \Hom(L_F,Z')\mid \phi_{\bm{\pi}^{\#}}\otimes \bm{\eta}^{\#}\simeq \phi_{\bm{\pi}^{\#}}\}\subset \Hom(L_F,Z') \label{eq:parameter-self-twists}
 \end{equation}
under the isomorphism $\Hom(L_F,Z')\simeq X(M^{\#}/M)$. By \eqref{eq:multiplicity-free-length}, we get 
$$|\cO_{M}(\bm{\pi}^{\#})| = (\text{ length of }\bm{\pi}^{\#}|_{M}) = |X_M(\bm{\pi}^{\#})|.$$
Denote by $m\longmapsto \overline{m}$ the natural projection $p:\widehat{M^{\#}}\longrightarrow \widehat{M}$. We have a well-defined map 
    $$ \Phi(\bM^{\#}) \longrightarrow \Phi(\bM),\qquad \phi\longmapsto \overline{\phi}:= p\circ \phi. $$ 
    
\begin{lemma}\label{lemma:parameter-lifts}
    
    For $\bm{\pi}^{\#},\bm{\pi}'^{\#}\in \Irr(M^{\#})$, 
    \[
    \overline{\phi_{\bm{\pi}^{\#}}} \simeq \overline{\phi_{\bm{\pi}'^{\#}}} \Longleftrightarrow \bm{\pi}'^{\#} \simeq \bm{\pi}^{\#}\otimes \bm{\eta}^{\#}\text{ for some }\bm{\eta}^{\#}\in X(M^{\#}/M).
    \]
\end{lemma}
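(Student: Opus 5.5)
The plan is to move the statement to the dual side using the LLC for $\GL_2$, which is compatible with twisting, and then to lift a conjugating element through the central isogeny $p:\widehat{M^{\#}}\to\widehat{M}$.

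The direction ``$\Leftarrow$'' is immediate. If $\bm{\pi}'^{\#}\simeq\bm{\pi}^{\#}\otimes\bm{\eta}^{\#}$, then $\phi_{\bm{\pi}'^{\#}}\simeq\phi_{\bm{\pi}^{\#}}\otimes\bm{\eta}^{\#}$, where $\bm{\eta}^{\#}$ is viewed via the isomorphism $\Hom(L_F,Z')\simeq X(M^{\#}/M)$ as a parameter valued in $Z'$. Since $Z'=\ker p$, composing with $p$ kills the twist, so $\overline{\phi_{\bm{\pi}'^{\#}}}\simeq\overline{\phi_{\bm{\pi}^{\#}}}$.

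For ``$\Rightarrow$'', write $\phi=\phi_{\bm{\pi}^{\#}}$ and $\phi'=\phi_{\bm{\pi}'^{\#}}$. Suppose $p\circ\phi'=\overline{g}\,(p\circ\phi)\,\overline{g}^{-1}$ for some $\overline{g}\in\widehat{M}$.
\begin{enumerate}
\item Since $p$ is surjective, choose a lift $g\in\widehat{M^{\#}}$ of $\overline{g}$ and replace $\phi$ by $g\phi g^{-1}$. We may then assume $p\circ\phi'=p\circ\phi$ on the nose.
\item Define $\eta(w):=\phi'(w)\phi(w)^{-1}$. It lies in $Z'$ for every $w\in L_F$.
\item Because $Z'$ is central, $\eta$ is a homomorphism: $\eta(w_1w_2)=\phi'(w_1)\eta(w_2)\phi(w_1)^{-1}=\eta(w_1)\eta(w_2)$.
\item $\eta$ inherits continuity from $\phi,\phi'$. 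If we use Weil--Deligne parameters, the $\SL_2$-components of $\phi$ and $\phi'$ agree, since they land in the derived group and $p$ restricted to $\SL_2(\CC)^3$ has finite central kernel. Any difference would be a homomorphism $\SL_2\to Z'$, hence trivial. So $\eta$ factors through $W_F$, giving an element of $\Hom(L_F,Z')$.
\item Let $\bm{\eta}^{\#}\in X(M^{\#}/M)$ correspond to $\eta$ under the local class field theory isomorphism. Then $\phi'=\phi\otimes\eta$, so by the twist-compatibility of the LLC for $\GL_2$ (applied factorwise) we get $\bm{\pi}'^{\#}\simeq\bm{\pi}^{\#}\otimes\bm{\eta}^{\#}$.
\end{enumerate}

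I expect the only delicate point to be step 4, namely that the ratio of the $\SL_2$ (monodromy) parts is trivial. The argument above handles it: a homomorphism from the connected semisimple group $\SL_2(\CC)$ to the torus $Z'$ is trivial. Equivalently, in the Weil--Deligne formulation the nilpotent operators $N$ and $N'$ have the same image in $\mathrm{Lie}\,\widehat{M}$, and $\mathrm{Lie}(Z')$ meets the nilpotent cone only in $0$, so $N=N'$.

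One should also check that the identification $\Hom(L_F,Z')\simeq X(M^{\#}/M)$ is compatible with how the LLC for $\GL_2^3$ twists parameters. On each factor, a character $\chi_k$ of $F^\times$ corresponds via class field theory to the scalar-valued parameter $z_k$, and $\prod_k z_k=1$ corresponds to $\prod_k\chi_k\circ\det=\mathbf{1}$ on $M$. This holds by construction.
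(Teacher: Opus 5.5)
Your proof is correct and follows the paper's argument. You lift the conjugating element to $\widehat{M^{\#}}$ so that $p\circ\phi=p\circ\phi'$ on the nose, observe that $w\mapsto\phi'(w)\phi(w)^{-1}$ is a homomorphism into the central torus $Z'$, and conclude via the identification $\Hom(L_F,Z')\simeq X(M^{\#}/M)$ and twist-compatibility of the LLC for $\GL_2$. Your extra remarks make explicit what the paper leaves implicit: triviality on the $\SL_2$-part, which is automatic since $\SL_2(\CC)$ is perfect and $Z'$ is abelian, and the compatibility of the two identifications.
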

\begin{proof}
    For $\phi,\phi'\in \Phi(\bM^{\#})$, if $\overline{\phi}\simeq \overline{\phi'}$, up to $\widehat{M}$-conjugacy we may assume $\overline{\phi}= \overline{\phi'}$, then $p\circ \phi = p\circ \phi'$ simply means $\phi'(w)\phi(w)^{-1}\in Z'$ for each $w\in L_F$. Define $\lambda:L_F\rightarrow Z'$ by $\lambda(w):=\phi'(w)\phi(w)^{-1}$, then this is a (continuous) homomorphism since 
    \begin{align*}
        \lambda(w_1 w_2) & = \phi'(w_1w_2) \phi(w_1 w_2)^{-1}\\
        & = \phi'(w_1)(\phi'(w_2)\phi(w_2)^{-1})\phi(w_1)^{-1} = \lambda(w_1)\lambda(w_2),
    \end{align*}
   so we have \[
    \overline{\phi_{\bm{\pi}^{\#}}} \simeq  \overline{\phi_{\bm{\pi}'^{\#}}}  \Longleftrightarrow \phi_{\bm{\pi}'^{\#}} \simeq \phi_{\bm{\pi}^{\#}}\otimes \lambda\text{ for some }\lambda\in \Hom(L_F,Z')
    \]
   and hence the result by the explanation before the lemma.
\end{proof}

For $\bm{\pi}^{\#} \in\Irr(M^{\#})$ with its L-parameter $\phi_{\bm{\pi}^{\#}}$ and $\overline{m}\in Z_{\widehat{M}}(\overline{\phi_{\bm{\pi}^{\#}}})$, pick a lift $m\in \widehat{M^{\#}}$ of $\overline{m}$ and set $$^m \phi_{\bm{\pi}^{\#}}:L_F\longrightarrow \widehat{M^{\#}},\qquad ^m\phi_{\bm{\pi}^{\#}}(w)= m\phi_{\bm{\pi}^{\#}}(w)m^{-1}.$$ By the above proof, $\overline{{}^m \phi_{\bm{\pi}^{\#}}}= \overline{\phi_{\bm{\pi}^{\#}}}$ gives $\lambda_{m}\in \Hom(L_F,Z')$ defined by $\lambda_m(w) = {}^m \phi_{\bm{\pi}^{\#}}(w) \phi_{\bm{\pi}^{\#}}(w)^{-1}$, and $^m\phi_{\bm{\pi}^{\#}} \simeq \phi_{\bm{\pi}^{\#}}\otimes \lambda_m$. Since $m$ is unique up to an element in $Z'$, the element $\lambda_m$ depends only on $\overline{m}$, hence a map 
\begin{equation}
    \label{eq:centralizer-map}Z_{\widehat{M}}(\overline{\phi_{\bm{\pi}^{\#}}}) \longrightarrow \Hom(L_F,Z'),\qquad \overline{m}\longmapsto \lambda_m.
\end{equation}
 The image of this map is clearly the subgroup \eqref{eq:parameter-self-twists} (since $^m \phi_{\bm{\pi}^{\#}}\simeq \phi_{\bm{\pi}^{\#}}$), which is isomorphic to $X_M(\bm{\pi}^{\#})$ under $\Hom(L_F,Z')\simeq X(M^{\#}/M)$.

\begin{lemma}\label{lemma:component-group-self-twists}
    For $\bm{\pi}^{\#} \in\Irr(M^{\#})$ with its L-parameter $\phi_{\bm{\pi}^{\#}}$, the above map \eqref{eq:centralizer-map} induces an isomorphism 
    \begin{equation}
        \label{eq:component-group-for-llc}\cS(\overline{\phi_{\bm{\pi}^{\#}}}):=\pi_0\left(Z_{\widehat{M}}(\overline{\phi_{\bm{\pi}^{\#}}})/Z(\widehat{M})\right)\xrightarrow{\sim} X_M(\bm{\pi}^{\#}).
    \end{equation}

\end{lemma}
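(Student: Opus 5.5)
We need to show the map $\overline{m}\mapsto\lambda_m$ from \eqref{eq:centralizer-map} is a homomorphism, is surjective onto \eqref{eq:parameter-self-twists}, and has kernel exactly the identity component of $Z_{\widehat{M}}(\overline{\phi_{\bm{\pi}^{\#}}})$ together with $Z(\widehat{M})$. Surjectivity was already noted before the lemma. The image is identified with $X_M(\bm{\pi}^{\#})$ via $\Hom(L_F,Z')\simeq X(M^{\#}/M)$, using the compatibility of the LLC for $\GL_2^3$ with twisting by characters. So the work lies in the homomorphism property and the kernel.

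\emph{Homomorphism.} For lifts $m_1,m_2$ we write
\[
{}^{m_1m_2}\phi(w)\,\phi(w)^{-1}=m_1\bigl(\lambda_{m_2}(w)\phi(w)\bigr)m_1^{-1}\phi(w)^{-1}=\lambda_{m_2}(w)\lambda_{m_1}(w).
\]
This holds because $\lambda_{m_2}(w)\in Z'$ is central in $\widehat{M^{\#}}$. Hence $\overline{m}\mapsto\lambda_m$ is a homomorphism into the abelian group $\Hom(L_F,Z')$.

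\emph{Kernel.} We have $\lambda_m=\mathbf 1$ iff $m\in Z_{\widehat{M^{\#}}}(\phi)$. So the kernel is $p\bigl(Z_{\widehat{M^{\#}}}(\phi)\bigr)$, where $\phi=\phi_{\bm{\pi}^{\#}}$.

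\begin{itemize}
\item Since $\widehat{M^{\#}}=\GL_2(\CC)^3$, the centralizer $Z_{\widehat{M^{\#}}}(\phi)$ is a product of centralizers of representations in $\GL_2$. Each is an open subset (complement of a determinant hypersurface) of the commutant algebra $\End_{L_F}(\phi_i)$. It is therefore connected, and so is the product. Hence $p(Z_{\widehat{M^{\#}}}(\phi))$ is a connected subgroup containing $p(Z(\widehat{M^{\#}}))=Z(\widehat{M})$. The last equality holds because centers of $\GL_2^3$ surject onto the center of the quotient by the central torus $Z'$; this should be checked, e.g. $Z(\widehat M)$ is the image of scalars, since $\widehat{M}=\widehat{M^{\#}}/Z'$ has the same derived group $\SL_2^3$ modulo the finite kernel.
\item Conversely, $p(Z_{\widehat{M^{\#}}}(\phi))$ is of finite index in $Z_{\widehat{M}}(\overline\phi)$, since the quotient injects into the finite group $X_M(\bm{\pi}^{\#})$. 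It is also closed, being the image of an algebraic group. It is therefore open, hence contains the identity component.
\end{itemize}
Thus the kernel equals $Z_{\widehat{M}}(\overline\phi)^{\circ}Z(\widehat M)$, and
\[
\pi_0\bigl(Z_{\widehat M}(\overline\phi)/Z(\widehat M)\bigr)=Z_{\widehat M}(\overline\phi)\big/Z_{\widehat M}(\overline\phi)^{\circ}Z(\widehat M)\xrightarrow{\sim}X_M(\bm{\pi}^{\#}).
\]

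The main point requiring care is this kernel identification: the connectedness of $Z_{\widehat{M^{\#}}}(\phi)$ for $\GL$-type groups, and the identification of $Z(\widehat M)$ with the image of $Z(\widehat{M^{\#}})$. Both are standard for $\GL_n$ and should go through directly here.
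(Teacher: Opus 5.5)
Your proposal is correct and follows essentially the paper's argument: you identify the kernel of $\overline{m}\mapsto\lambda_m$ with $p\bigl(Z_{\widehat{M^{\#}}}(\phi)\bigr)$, and then use $p^{-1}(Z(\widehat M))=Z(\widehat{M^{\#}})$ together with the connectedness of centralizers in $\GL_2(\CC)^3$. Beyond the paper, you make explicit the homomorphism property of $\overline{m}\mapsto\lambda_m$, and you give the closed, finite-index argument showing that this connected kernel is exactly the identity component; the paper leaves both points implicit.
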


\begin{proof}
By definition, the kernel of the map \eqref{eq:centralizer-map} is the image of 
\(Z_{\widehat{ M^{\#}}}(\phi_{\bm{\pi}^{\#}})\) in \(Z_{\widehat{ M}}(\overline{\phi_{\bm{\pi}^{\#}}})\), hence an exact sequence
\[
1\longrightarrow
Z_{\widehat{ M^{\#}}}(\phi_{\bm{\pi}^{\#}})/Z'
\longrightarrow
Z_{\widehat{M}}(\overline{\phi_{\bm{\pi}^{\#}}})
\longrightarrow
X_M(\bm{\pi}^{\#})
\longrightarrow 1.
\]
By $p^{-1}\bigl(Z(\widehat{M})\bigr)=Z(\widehat{ M^{\#}})$, taking quotient along the above exact sequence gives 
\[
1\longrightarrow
Z_{\widehat{ M^{\#}}}(\phi_{\bm{\pi}^{\#}})/Z(\widehat{ M^{\#}})
\longrightarrow
Z_{\widehat{M}}(\overline{\phi_{\bm{\pi}^{\#}}})/Z(\widehat{M})
\longrightarrow
X_M(\bm{\pi}^{\#})
\longrightarrow 1.
\]
Since $Z_{\widehat{ M^{\#}}}(\phi_{\bm{\pi}^{\#}})$ and $Z_{\widehat{ M^{\#}}}(\phi_{\bm{\pi}^{\#}})/Z(\widehat{ M^{\#}})$ are connected (by $\bM^{\#}\simeq \GL_2^3$), we get
\[
\pi_0\!\left(
Z_{\widehat{M}}(\overline{\phi_{\bm{\pi}^{\#}}})/Z(\widehat{M})
\right)
\simeq X_M(\bm{\pi}^{\#}).
\]
\end{proof}

Now for $\bm{\tau}\in \Irr(M)$, we may pick any $\bm{\pi}^{\#} \in \Irr(M^{\#})$ with $\bm{\pi}^{\#}|_M\supset \bm{\tau}$, and define the L-parameter of $\bm{\tau}$ to be $\phi_{\bm{\tau}}:= \overline{\phi_{\bm{\pi}^{\#}}} \in \Phi(\bM)$. If $\bm{\pi}'^{\#}\in\Irr(M^{\#})$ is another representation whose restriction contains $\bm{\tau}$, we have $\bm{\pi}'^{\#}\simeq\bm{\pi}^{\#}\otimes\bm{\eta}^{\#}$ for some $\bm{\eta}^{\#}\in X(M^{\#}/M)$ and the preceding lemma shows $\overline{\phi_{\bm{\pi}'^{\#}}}\simeq\overline{\phi_{\bm{\pi}^{\#}}}$. Thus the L-packet of $\bm{\tau}$ is
\[
\Pi_{\phi_{\bm{\tau}}}:=\{\bm{\tau}'\in\Irr(M)\mid \bm{\tau}'\subset \bm{\pi}^{\#}|_M\}=\cO_M(\bm{\pi}^{\#})
\]
for any $\bm{\pi}^{\#}\in \Irr(M^{\#})$ with $\bm{\pi}^{\#}|_M\supset \bm{\tau}$.

\subsection{Restriction of irreducible principal series}\label{subsec:principal-series}

For $\tau\in \Irr(G)$, we denote by $X(\tau)$ the group of characters $\eta:F^{\times}\rightarrow \CC^\times$ such that $\tau\otimes \eta\simeq \tau$. Let $\mu,\nu: F^\times \rightarrow \CC^\times$ be two characters with $\mu\nu^{-1}\neq |\cdot|^{\pm 1}$, so that $\tau:=\mu\times \nu$ is an irreducible principal series. Then for a character $\eta:F^\times \rightarrow \CC^\times$, we have $\tau\otimes \eta \simeq \eta\mu\times \eta\nu$, so that
\[
X(\tau)=\begin{cases}
    \{\mathbf{1}\},& \mu\nu^{-1}\text{ is not of order 2},\\
    \{\mathbf{1}, \mu\nu^{-1}\},& \mu\nu^{-1}\text{ is of order 2}.
\end{cases}
\]

We next take characters $\mu_i,\nu_i:F^\times \longrightarrow \CC^\times$ for $1\leq i \leq 3$ and use the notation $\xi_i:=\mu_i \nu_i^{-1}$. Assume that $\xi_i\neq |\cdot|^{\pm 1}$ and form the irreducible representations $$\pi_i:=\mu_i\times \nu_i \in \Irr(G)\qquad \text{and}\qquad\bm{\pi}^{\#}:=\pi_1\boxtimes \pi_2\boxtimes \pi_3\in \Irr(M^{\#}).$$ 
Recall that in \S~\ref{subsec:notation} we introduced the notation
\[
\bm{\chi}^{\#}:=(\mu_1,\nu_1;\mu_2,\nu_2;\mu_3,\nu_3)
\]
for the corresponding character of $T_M^{\#}$, and $\bm{\chi}:=\bm{\chi}^{\#}|_{T_M}$ its restriction to $T_M$.  From the relations
\[
B_M = B_M^{\#}\cap M,\qquad M^{\#} = B_M^{\#}\cdot M,\qquad \delta_{B_M^{\#}}|_{B_M} = \delta_{B_M},
\]
we find that the restriction map induces an $M$-isomorphism 
\[
\bm{\pi}^{\#}|_{M} = I({\bm{\chi}}^{\#})|_{M} \overset{\sim}{\longrightarrow} I({\bm{\chi}}).
\]

\begin{lemma}\label{lemma:character-restriction}
Let
\[
\bm{\chi}^{\#}
=
(\mu_1,\nu_1;\mu_2,\nu_2;\mu_3,\nu_3),
\qquad
\bm{\chi}'^{\#}
=
(\mu'_1,\nu'_1;\mu'_2,\nu'_2;\mu'_3,\nu'_3)
\]
be two characters of \(T_{M}^{\#}\). Then $\left.\bm{\chi}^{\#}\right|_{T_M}
=
\left.\bm{\chi}'^{\#}\right|_{T_M}$ if and only if 
\begin{itemize}
    \item there exist $\eta_1,\eta_2,\eta_3:F^\times \longrightarrow \CC^\times$ with $\eta_1\eta_2\eta_3=\mathbf{1}$, such that 
    \[
    \mu_i' = \mu_i \eta_i,\qquad \nu_i' = \nu_i \eta_i\qquad \text{ for }1\leq i\leq 3.
    \]
\end{itemize}
\end{lemma}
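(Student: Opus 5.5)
The plan is to describe $T_M$ explicitly and reduce everything to an elementary statement about characters of $(F^\times)^3$. An element of $T_M^{\#}$ is a triple $(\diag(a_1,b_1),\diag(a_2,b_2),\diag(a_3,b_3))$, and it lies in $T_M$ exactly when $a_1b_1=a_2b_2=a_3b_3$. So $T_M$ is the kernel of the surjective homomorphism
\[
\delta|_{T_M^{\#}}: T_M^{\#}\longrightarrow (F^\times)^2,\qquad (a_i,b_i)_i\longmapsto (a_1b_1(a_2b_2)^{-1},\,a_2b_2(a_3b_3)^{-1}).
\]
Surjectivity is already visible on elements of the form $(\diag(t,1),\diag(1,1),\diag(1,1))$ and similar ones.

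First I would prove the direction ``$\Leftarrow$''. The difference $\bm{\chi}'^{\#}(\bm{\chi}^{\#})^{-1}$ is the character $(a_i,b_i)\mapsto \prod_i\eta_i(a_ib_i)$. On $T_M$ all the $a_ib_i$ equal a common value $d$, so this character equals $(\eta_1\eta_2\eta_3)(d)=1$.

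For ``$\Rightarrow$'', set $\bm{\theta}:=\bm{\chi}'^{\#}(\bm{\chi}^{\#})^{-1}=(\alpha_1,\beta_1;\alpha_2,\beta_2;\alpha_3,\beta_3)$, which is trivial on $T_M$. Since $T_M=\ker \delta|_{T_M^{\#}}$ and $\delta|_{T_M^{\#}}$ is surjective, $\bm{\theta}$ factors as $\rho\circ\delta$ for a character $\rho=(\rho_1,\rho_2)$ of $(F^\times)^2$. Writing $d_i=a_ib_i$, this gives
\[
\bm{\theta}=\rho_1(d_1)\rho_1^{-1}(d_2)\rho_2(d_2)\rho_2^{-1}(d_3).
\]
I then set $\eta_1=\rho_1$, $\eta_2=\rho_1^{-1}\rho_2$, $\eta_3=\rho_2^{-1}$. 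These visibly satisfy $\eta_1\eta_2\eta_3=\mathbf 1$, and $\bm\theta(a_i,b_i)=\prod\eta_i(a_ib_i)$, which means $\alpha_i=\beta_i=\eta_i$. That is exactly the assertion.

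Alternatively, one can argue directly without the quotient map. Restrict $\bm\theta$ to the elements $(\diag(t,t^{-1}),1,1)\in T_M$ to get $\alpha_1=\beta_1$, and similarly for the other factors. Then restricting to $(\diag(d,1),\diag(d,1),\diag(d,1))$ gives $\eta_1\eta_2\eta_3=\mathbf 1$.

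The only mildly delicate point is justifying the factorization through $\delta$, which needs surjectivity of $\delta$ on $T_M^{\#}$. The direct elementwise argument avoids it entirely, so I would present that version as the main proof.
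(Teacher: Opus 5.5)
Your proof is correct, and your main (elementwise) version is essentially the paper's argument: the paper evaluates the ratio character on the parametrization $t(a_1,a_2,a_3,c)$ of $T_M$, obtaining $(\nu_1\nu_2\nu_3)(c)\prod_i(\mu_i\nu_i^{-1})(a_i)$, and your two families of test elements are just the specializations $a_2=a_3=c=1$ and $a_1=a_2=a_3=c$ of that formula. Your alternative factorization through $\delta$ is also valid, since $\delta|_{T_M^{\#}}$ visibly admits a continuous section, but it is not needed.
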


\begin{proof}
Direct computation. It suffices to check  $\left.\bm{\chi}^{\#}\right|_{T_M}=\mathbf{1}$ if and only if $\mu_i = \nu_i$ and $\mu_1\mu_2\mu_3=\mathbf{1}$ (then apply this to $\bm{\chi}'^{\#}(\bm{\chi}^{\#})^{-1}$). Using the coordinates  \[
    t=t(a_1,a_2,a_3,c):=\left(\begin{bmatrix}
        a_1 & \\ & ca_1^{-1}
    \end{bmatrix},\begin{bmatrix}
        a_2 & \\ & ca_2^{-1}
    \end{bmatrix},\begin{bmatrix}
        a_3 & \\ & ca_3^{-1}
    \end{bmatrix}\right),\qquad a_i,c\in F^\times,
    \]
we have
\[
{\bm{\chi}}(t(a_1,a_2,a_3,c))
=
\prod_{i=1}^3 \mu_i(a_i)\nu_i(ca_i^{-1})
=
(\nu_1\nu_2\nu_3)(c)\prod_{i=1}^3 (\mu_i\nu_i^{-1})(a_i),
\]
and hence the result.
\end{proof}

For $1\leq i\leq 3$ we denote by $X_i:=X(\pi_i)$, then the definition \eqref{eq:self-twist-group} can be explicated as
\begin{equation}\label{eq:principal-series-self-twists}
X_M(\bm{\pi}^{\#})=\{(\eta_1,\eta_2,\eta_3)\in X_1\times X_2\times X_3\mid \eta_1 \eta_2 \eta_3 =\mathbf{1}\}.
\end{equation}
From $|X_i|\leq 2$ we find that the order of $X_M(\bm{\pi}^{\#})$ can only be $1,2,4$. We can give a very explicit classification when $X_M(\bm{\pi}^{\#})$ is of order 2 or 4.

\begin{lemma}\label{lemma:order-four-classification}
    Suppose $|X_M(\bm{\pi}^{\#})|=4$, then we have $\xi_1= \xi_2=\xi_3=\xi$ for some character $\xi:F^\times \longrightarrow \CC^1$ of order 2, and 
    \[
    X_M(\bm{\pi}^{\#})=\{\mathbf{1},(\xi,\xi,\mathbf{1}),(\xi,\mathbf{1},\xi),(\mathbf{1},\xi,\xi)\}\subset X_1\times X_2\times X_3.
    \]
    If moreover $\omega_{\pi_1}\omega_{\pi_2}\omega_{\pi_3}=\mathbf{1}$, then there exists a character $\rho: F^\times \longrightarrow \CC^1$ with $\rho^2=\xi$.
\end{lemma}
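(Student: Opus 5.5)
The argument has two parts: a counting argument inside $X_1\times X_2\times X_3$ using \eqref{eq:principal-series-self-twists}, and then an explicit choice of $\rho$.

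\emph{Step 1: every $X_i$ has order $2$.} By the computation of $X(\tau)$ earlier in this subsection, each $X_i$ is either $\{\mathbf{1}\}$ or $\{\mathbf{1},\xi_i\}$ with $\xi_i$ of order $2$. Suppose, say, $X_3=\{\mathbf{1}\}$. Then every element of $X_M(\bm{\pi}^{\#})$ has the form $(\eta_1,\eta_2,\mathbf{1})$ with $\eta_1\eta_2=\mathbf{1}$. Hence $\eta_1=\eta_2^{-1}=\eta_2$ (since $\eta_2^2=\mathbf{1}$), and this common value lies in $X_1\cap X_2$. So $X_M(\bm{\pi}^{\#})$ embeds into $X_1\cap X_2$, which has order at most $2$. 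This is a contradiction, so $X_i=\{\mathbf{1},\xi_i\}$ for every $i$.

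\emph{Step 2: the three nontrivial elements.} Record a nontrivial element $(\eta_1,\eta_2,\eta_3)$ by its support $S=\{i:\eta_i\neq\mathbf{1}\}\subset\{1,2,3\}$; the element is determined by $S$.
\begin{itemize}
\item $|S|=1$ is impossible, since then $\xi_i=\mathbf{1}$.
\item $|S|=2$, say $S=\{i,j\}$, occurs exactly when $\xi_i=\xi_j$.
\item $|S|=3$ occurs exactly when $\xi_1\xi_2\xi_3=\mathbf{1}$.
\end{itemize}
The group $X_M(\bm{\pi}^{\#})$ must contain three nontrivial elements. If one of them had support of size $3$, its product with any element of support size $2$ would have support of size $1$, which is impossible. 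The remaining elements would then also have support of size $3$, but there is only one such element. Therefore the three nontrivial elements have supports $\{1,2\},\{1,3\},\{2,3\}$. This gives $\xi_1=\xi_2=\xi_3=:\xi$ and the displayed description of $X_M(\bm{\pi}^{\#})$. Since $\xi^2=\mathbf{1}$, the character $\xi$ takes values in $\{\pm1\}\subset\CC^1$.

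\emph{Step 3: the square root.} We have $\omega_{\pi_i}=\mu_i\nu_i=\xi\nu_i^2$. The hypothesis $\omega_{\pi_1}\omega_{\pi_2}\omega_{\pi_3}=\mathbf{1}$ therefore becomes $\xi^3(\nu_1\nu_2\nu_3)^2=\mathbf{1}$, that is, $\xi=(\nu_1\nu_2\nu_3)^{-2}$. Put $\rho:=(\nu_1\nu_2\nu_3)^{-1}$, so $\rho^2=\xi$. Unitarity follows without assuming the $\nu_i$ unitary: $|\rho|^2=|\xi|=1$, hence $\rho$ is $\CC^1$-valued.

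The only step requiring care is the support argument in Step 2; the rest is bookkeeping.
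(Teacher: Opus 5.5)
Your proposal is correct and follows essentially the same route as the paper. You first force every $X_i$ to have order $2$, then use closure of $X_M(\bm{\pi}^{\#})$ under multiplication (your symmetric-difference-of-supports check plays the role of the paper's observation that, if $\xi_1\neq\xi_2$, the four surviving candidates do not form a group), and finally take $\rho=(\nu_1\nu_2\nu_3)^{-1}$. Your explicit argument for Step~1 and your remark $|\rho|^2=|\xi|=1$, which makes $\rho$ $\CC^1$-valued even when the $\nu_i$ are not unitary, fill in two points the paper leaves implicit.
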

\begin{proof}
    It is obvious that $|X_M(\bm{\pi}^{\#})|=4$ implies the order of $X_1\times X_2\times X_3$ must be 8, so that $\xi_1,\xi_2,\xi_3$ are all of order 2. If $\xi_1\neq \xi_2$, then the following four elements
    \[
    (\xi_1,\xi_2,\mathbf{1}),\quad (\xi_1,\mathbf{1},\mathbf{1}),\quad(\mathbf{1},\xi_2,\mathbf{1}),\quad (\mathbf{1},\mathbf{1},\xi_3)
    \]
    do not belong to $X_M(\bm{\pi}^{\#})$. But the remaining four elements 
    \[
    \mathbf{1},\quad(\xi_1,\mathbf{1},\xi_3),\quad(\mathbf{1},\xi_2,\xi_3),\quad (\xi_1,\xi_2,\xi_3)
    \]
    do not form a group, hence we must have $\xi_1=\xi_2$. By cyclic symmetry, we also have $\xi_2=\xi_3$. 

    We have $\omega_{\pi_i}=\mu_i \nu_i$, so from $\omega_{\pi_1}\omega_{\pi_2}\omega_{\pi_3}=\mathbf{1}$ and $\xi^2=\mathbf{1}$ we would get 
    \[
    \mathbf{1} =\prod_{i=1}^3\mu_i \nu_i = \prod_{i=1}^3 \nu_i^2 \xi_i = \xi\cdot \prod_{i=1}^3 \nu_i^2,
    \]
    so that $\xi = \rho^2$ for $\rho = (\nu_1\nu_2\nu_3)^{-1}$.
\end{proof}

\begin{corollary} \label{corollary:order-4-canonical-form}
    If $|X_M(\bm{\pi}^{\#})|=4$ and $\omega_{\pi_1}\omega_{\pi_2}\omega_{\pi_3}=\mathbf{1}$, then $\bm{\pi}^{\#}|_M\simeq \bm{\pi}'^{\#}|_M$ for some 
    \[
    \bm{\pi}'^{\#} = (\rho\times \rho^{-1})\boxtimes(\rho\times \rho^{-1})\boxtimes(\rho\times \rho^{-1}),
    \]
    with $\rho:F^\times \longrightarrow \CC^1$ of order 4.
\end{corollary}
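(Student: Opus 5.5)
Our plan is to combine Lemma~\ref{lemma:order-four-classification} with Lemma~\ref{lemma:character-restriction}. By Lemma~\ref{lemma:order-four-classification}, we have $\xi_1=\xi_2=\xi_3=\xi$ with $\xi$ of order 2, and $\rho:=(\nu_1\nu_2\nu_3)^{-1}$ satisfies $\rho^2=\xi$. Note that the characters $\mu_i,\nu_i$ themselves need not be unitary, so we should not expect $\rho$ to be unitary for free. Since $\rho^4=\xi^2=\mathbf{1}$, however, $\rho$ takes values in the fourth roots of unity, so it is automatically unitary. Its order is exactly $4$: indeed $\rho^2=\xi\neq\mathbf{1}$.

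We then aim to produce characters $\eta_i$ with $\eta_1\eta_2\eta_3=\mathbf{1}$ and
\[
\mu_i\eta_i=\rho,\qquad \nu_i\eta_i=\rho^{-1}.
\]
The compatibility condition for the $i$-th equation is $\mu_i\nu_i^{-1}=\rho^2$, which holds because $\xi_i=\xi=\rho^2$. So we set $\eta_i:=\rho^{-1}\nu_i^{-1}$, which gives $\nu_i\eta_i=\rho^{-1}$ and $\mu_i\eta_i=\xi_i\nu_i\eta_i=\rho^2\rho^{-1}=\rho$. It remains to check the product condition:
\[
\eta_1\eta_2\eta_3=\rho^{-3}(\nu_1\nu_2\nu_3)^{-1}=\rho^{-3}\rho=\rho^{-2}=\xi.
\]
This is not trivial, so this choice fails.

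To fix it, we will instead match $\bm{\chi}^{\#}$ with a variant in which one or more factors use $\rho^{-1}$ in place of $\rho$. Replacing $\rho$ by $\rho^{-1}=\rho\xi$ in the $i$-th factor changes $\eta_i$ by $\xi$. So in factor $1$ we use the target $(\rho^{-1},\rho)$, i.e.\ $\eta_1=\rho\nu_1^{-1}$. Then $\eta_1\eta_2\eta_3=\rho^{-1}\rho\cdot\rho^{-1}\nu_1^{-1}\nu_2^{-1}\nu_3^{-1}$, which we recompute directly: $\eta_1\eta_2\eta_3=\rho\cdot\rho^{-2}(\nu_1\nu_2\nu_3)^{-1}=\rho^{-1}\cdot\rho=\mathbf{1}$. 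Thus Lemma~\ref{lemma:character-restriction} gives
\[
\bm{\chi}^{\#}|_{T_M}=(\rho^{-1},\rho;\rho,\rho^{-1};\rho,\rho^{-1})|_{T_M}.
\]

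Finally, since $\rho^{-1}\times\rho\simeq\rho\times\rho^{-1}$ as irreducible principal series ($\rho^2=\xi\neq|\cdot|^{\pm1}$), the first factor may be rewritten. Using $\bm{\pi}^{\#}|_M\simeq I(\bm{\chi})$, which depends only on $\bm{\chi}^{\#}|_{T_M}$, we get $\bm{\pi}^{\#}|_M\simeq\bm{\pi}'^{\#}|_M$. The only delicate point is this bookkeeping of the product condition, and flipping one factor resolves it.
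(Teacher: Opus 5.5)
Your argument is correct and is essentially the paper's proof: both feed an explicit lift into Lemma~\ref{lemma:character-restriction}. Your remarks that $\rho^4=\mathbf{1}$ forces $\rho$ to be unitary, and that $\rho^2=\xi\neq\mathbf{1}$ forces order exactly $4$, are details the paper leaves implicit.

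The only difference is the choice of $\rho$. The paper takes $\rho=\nu_1\nu_2\nu_3$, the inverse of your choice (and of the one in Lemma~\ref{lemma:order-four-classification}). With that choice, $\eta_i=(\nu_i\rho)^{-1}$ already gives $\eta_1\eta_2\eta_3=\rho^{-4}=\mathbf{1}$. So your one-factor flip and the appeal to $\rho^{-1}\times\rho\simeq\rho\times\rho^{-1}$ are unnecessary; flipping all three factors is the same as replacing $\rho$ by $\rho^{-1}$.
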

\begin{proof}
    By the computation in the proof of the previous lemma, we have $\bm{\pi}^{\#} = I(\bm{\chi}^{\#})$ with $\bm{\chi}^{\#}$ of the form 
    \[
    \bm{\chi}^{\#} = (\nu_1\xi,\nu_1;\nu_2\xi,\nu_2;\nu_3\xi,\nu_3),
    \]
    where $\xi:F^\times \longrightarrow \CC^\times$ is of order 2 and $\xi(\nu_1\nu_2\nu_3)^2=\mathbf{1}$. Take $\rho=\nu_1\nu_2\nu_3$ and apply Lemma \ref{lemma:character-restriction} to $\bm{\chi}^{\#}$ and $\bm{\chi}'^{\#} = (\rho,\rho^{-1};\rho,\rho^{-1};\rho,\rho^{-1})$ with $\eta_i = (\nu_i\rho)^{-1}$, $1\leq i\leq 3$.
\end{proof}

\begin{lemma}\label{lemma:order-two-classification}
    Suppose $|X_M(\bm{\pi}^{\#})|=2$, then up to permutation exactly one of the following holds.
    \begin{enumerate}
        \item\label{item:order-two-type-i} We have $\xi_1 = \xi_2 = \xi$ of order 2, and $\xi_3 \neq \xi$. In this case, we get $$X_M(\bm{\pi}^{\#})=\{\mathbf{1},(\xi,\xi,\mathbf{1})\}.$$
        \item\label{item:order-two-type-ii} The three characters $\xi_1,\xi_2,\xi_3$ are all of order 2, distinct from each other, and $\xi_1\xi_2\xi_3 = \mathbf{1}$. In this case, we get $$X_M(\bm{\pi}^{\#}) =\{\mathbf{1}, (\xi_1,\xi_2,\xi_3)\}.$$
    \end{enumerate}
\end{lemma}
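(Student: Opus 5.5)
We use the explicit description \eqref{eq:principal-series-self-twists}: $X_M(\bm{\pi}^{\#})$ is the set of triples $(\eta_1,\eta_2,\eta_3)\in X_1\times X_2\times X_3$ with $\eta_1\eta_2\eta_3=\mathbf{1}$. Each $X_i$ is either trivial or $\{\mathbf{1},\xi_i\}$ with $\xi_i$ of order 2. If $|X_M(\bm{\pi}^{\#})|=2$, it contains exactly one nontrivial element $(\eta_1,\eta_2,\eta_3)$ with $\eta_i\in\{\mathbf{1},\xi_i\}$ and product trivial. We split according to the number of indices $i$ with $\eta_i\neq\mathbf{1}$. This number cannot be $1$, since then $\eta_i=\mathbf{1}$ by the product condition. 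So it is $2$ or $3$.

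\emph{Two nontrivial entries.} Up to permutation the element is $(\eta_1,\eta_2,\mathbf{1})$ with $\eta_1=\xi_1$ and $\eta_2=\xi_2$, both of order 2. The product condition gives $\xi_1\xi_2=\mathbf{1}$, i.e. $\xi_1=\xi_2=:\xi$. It remains to show $\xi_3\neq\xi$. If $\xi_3=\xi$, then all three $X_i=\{\mathbf{1},\xi\}$, and $(\xi,\mathbf{1},\xi)$, $(\mathbf{1},\xi,\xi)$ would also lie in $X_M(\bm{\pi}^{\#})$. That forces order 4, a contradiction. Hence $X_M(\bm{\pi}^{\#})=\{\mathbf{1},(\xi,\xi,\mathbf{1})\}$, which is case (1). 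This is consistent with $\xi_3$ being either not of order 2 or of order 2 but different from $\xi$.

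\emph{Three nontrivial entries.} The element is $(\xi_1,\xi_2,\xi_3)$ with all $\xi_i$ of order 2 and $\xi_1\xi_2\xi_3=\mathbf{1}$. We must show the $\xi_i$ are pairwise distinct. If, say, $\xi_1=\xi_2$, then $\xi_3=\xi_1\xi_2=\mathbf{1}$, contradicting that $\xi_3$ has order 2. So the $\xi_i$ are distinct, and $X_M(\bm{\pi}^{\#})=\{\mathbf{1},(\xi_1,\xi_2,\xi_3)\}$, which is case (2).

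\emph{Exclusivity.} In case (2) no two of the $\xi_i$ coincide, whereas case (1) requires, after some permutation, two equal characters of order 2. So the two cases cannot hold simultaneously under any permutation.

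The only step needing care is excluding $\xi_3=\xi$ in the first case, and this is handled by the order-4 count as in Lemma \ref{lemma:order-four-classification}. Everything else is routine bookkeeping.
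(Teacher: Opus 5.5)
Your proof is correct and is essentially the paper's argument: both are a direct case analysis of the description \eqref{eq:principal-series-self-twists}. The only difference is organizational. You split by the support of the unique nontrivial element of $X_M(\bm{\pi}^{\#})$, whereas the paper splits by the set $S$ of indices with $\xi_i$ of order 2; accordingly you derive distinctness from $\xi_1\xi_2\xi_3=\mathbf{1}$, while the paper derives the product condition from distinctness.
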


\begin{proof}
    We set 
    \[
    S:=\{1\leq i\leq 3\mid \xi_i \text{ is of order }2\},
    \]
    then clearly $|S|=1$ implies $|X_M(\bm{\pi}^{\#})|=1$. If $|S|=2$, up to permutation say $\xi_1,\xi_2$ are of order 2, then the only possibility is $X_M(\bm{\pi}^{\#})=\{\mathbf{1},(\xi_1,\xi_2,\mathbf{1})\}$, which forces $\xi_1=\xi_2$. If $|S|=3$, we have two cases:
    \begin{itemize}
        \item if $\xi_1,\xi_2,\xi_3$ are distinct from each other, then the only possibility is $X_M(\bm{\pi}^{\#})=\{\mathbf{1},(\xi_1,\xi_2,\xi_3)\}$, which forces $\xi_1\xi_2\xi_3 =\mathbf{1}$;
        \item otherwise up to permutation $\xi_1 = \xi_2 = \xi$, then $(\xi,\xi,\mathbf{1})\in X_M(\bm{\pi}^{\#})$, which forces $\xi_3\neq \xi$.
    \end{itemize}
\end{proof}

We introduce the following terminology:
\begin{itemize}
    \item if up to permutation part (\ref{item:order-two-type-i}) (resp. part (\ref{item:order-two-type-ii})) of the above lemma holds, we say $\{\bm{\sigma}_+, \bm{\sigma}_-\}$ is of \textbf{type I} (resp. \textbf{type II}).
\end{itemize}

\begin{corollary}\label{corollary:order-2-canonical-form}
Assume $|X_M(\bm{\pi}^{\#})|=2$ and $\omega_{\pi_1}\omega_{\pi_2}\omega_{\pi_3}=\mathbf{1}$.
\begin{enumerate}
    \item For type I case, up to permutation suppose $\xi_1=\xi_2 = \xi$ is of order 2 and $\xi_3\neq \xi$, then there exists a character $\alpha:F^\times \longrightarrow \CC^{\times}$ with $\alpha^2=\xi_3$, such that 
    \[
    \bm{\pi}'^{\#}:=(\xi\times \mathbf{1})\boxtimes (\xi\times \mathbf{1})\boxtimes (\alpha\times \alpha^{-1})
    \]
    satisfies $\bm{\pi}'^{\#}|_M\simeq \bm{\pi}^{\#}|_M$.

    \item For type II case, the three characters $\xi_1,\xi_2,\xi_3$ are all of order 2 and mutually distinct, with $\xi_1\xi_2\xi_3=\mathbf{1}$. There exists a character $\lambda:F^\times \longrightarrow \CC^1$ of order $\leq 2$, such that 
    \[
    \bm{\pi}'^{\#} := (\xi_1\times \mathbf{1})\boxtimes (\xi_2\times \mathbf{1})\boxtimes (\lambda\xi_3\times \lambda)
    \]
    satisfies $\bm{\pi}'^{\#}|_M\simeq \bm{\pi}^{\#}|_M$.
\end{enumerate}
\end{corollary}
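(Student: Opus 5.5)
The plan is to reduce everything to Lemma \ref{lemma:character-restriction}, exactly as in the proof of Corollary \ref{corollary:order-4-canonical-form}. Since $\bm{\pi}^{\#}|_M\simeq I(\bm{\chi})$ depends only on $\bm{\chi}=\bm{\chi}^{\#}|_{T_M}$, it suffices to find $\eta_1,\eta_2,\eta_3$ with $\eta_1\eta_2\eta_3=\mathbf{1}$ such that $(\mu_i\eta_i,\nu_i\eta_i)$ equals the prescribed pair for $\bm{\pi}'^{\#}$. Here we may replace any factor $\mu\times\nu$ by the isomorphic $\nu\times\mu$ whenever convenient. Equivalently, by the formula in the proof of Lemma \ref{lemma:character-restriction}, the restriction $\bm{\chi}$ is determined by the pair
\[
\Bigl(\nu_1\nu_2\nu_3,\ (\xi_1,\xi_2,\xi_3)\Bigr).
\]
So I only need to match the three ratios $\xi_i$ (possibly after swapping within a factor, which inverts $\xi_i$ and replaces $\nu_i$ by $\mu_i$) together with the product $\nu_1\nu_2\nu_3$.

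\emph{Type I.} The target $\bm{\pi}'^{\#}$ has ratios $(\xi,\xi,\alpha^2)$ and $\nu$-product $\alpha^{-1}$. Since $\xi_1=\xi_2=\xi$, I must choose $\alpha$ with $\alpha^2=\xi_3$ and $\alpha^{-1}=\nu_1\nu_2\nu_3$. Hence set $\alpha:=(\nu_1\nu_2\nu_3)^{-1}$ and check $\alpha^2=\xi_3$. The central character condition gives
\[
\mathbf{1}=\prod\mu_i\nu_i=\prod\xi_i\nu_i^2=\xi^2\xi_3(\nu_1\nu_2\nu_3)^2=\xi_3\alpha^{-2},
\]
using $\xi^2=\mathbf{1}$. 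This is exactly what is needed. Explicitly, take $\eta_1=\nu_1^{-1}$, $\eta_2=\nu_2^{-1}$ and $\eta_3=\alpha^{-1}\nu_3^{-1}$; their product is $\alpha^{-1}(\nu_1\nu_2\nu_3)^{-1}=\mathbf{1}$.

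\emph{Type II.} The target has ratios $(\xi_1,\xi_2,\xi_3)$ and $\nu$-product $\lambda$. So set $\lambda:=\nu_1\nu_2\nu_3$, with $\eta_1=\nu_1^{-1}$, $\eta_2=\nu_2^{-1}$ and $\eta_3=\lambda\nu_3^{-1}$; their product is $\mathbf{1}$. The same central character computation, now with $\xi_1\xi_2\xi_3=\mathbf{1}$, gives $\lambda^2=\mathbf{1}$, so $\lambda$ has order $\leq 2$. It remains to see that $\lambda$ is unitary, which is automatic because a character of order $\leq 2$ takes values in $\{\pm1\}$. The order-2 and distinctness claims about the $\xi_i$ are simply restated from Lemma \ref{lemma:order-two-classification}.

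There is no real obstacle here. The only points to get right are the bookkeeping of which of $\mu_i,\nu_i$ plays the role of $\nu$ in Lemma \ref{lemma:character-restriction}, and the use of the central character condition to produce the square root.
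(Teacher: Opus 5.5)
Your proof is correct and essentially identical to the paper's. Both apply Lemma \ref{lemma:character-restriction} with $\eta_1=\nu_1^{-1}$, $\eta_2=\nu_2^{-1}$ and $\eta_3=\nu_1\nu_2$; your $\eta_3=\alpha^{-1}\nu_3^{-1}$ (resp. $\lambda\nu_3^{-1}$) is exactly this. Your $\alpha=(\nu_1\nu_2\nu_3)^{-1}$ agrees with the paper's $\alpha=\nu_1\nu_2\nu_3\xi_3$ via the central character relation $(\nu_1\nu_2\nu_3)^2\xi_3=\mathbf{1}$, and in type II your $\lambda=\nu_1\nu_2\nu_3$ is the same as the paper's.
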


\begin{proof}
    By the previous Lemma, in the type I case we have 
    \[\bm{\chi}^{\#}=(\xi\nu_1,\nu_1;\xi\nu_2,\nu_2;\xi_3\nu_3,\nu_3)\]
    for $\xi:F^\times \longrightarrow\CC^\times$ of order 2 and $\xi_3\neq \xi$. The central character condition reads
    \[\mathbf{1}=\omega_{\pi_1}\omega_{\pi_2}\omega_{\pi_3} = (\xi\nu_1\nu_2\nu_3)^2\xi_3 = (\nu_1\nu_2\nu_3)^2\xi_3.\]
    We set $\alpha:=\nu_1\nu_2\nu_3 \xi_3$ (with $\alpha^2=\xi_3$), and apply Lemma \ref{lemma:character-restriction} to $\bm{\chi}^{\#}$ and $\bm{\chi}'^{\#}=(\xi,\mathbf{1};\xi,\mathbf{1};\alpha,\alpha^{-1})$ with $\eta_1 = \nu_1^{-1}$, $\eta_2 = \nu_2^{-1}$, $\eta_3 = \nu_1\nu_2$. 

    The type II case is a similar computation. We have 
    \[\bm{\chi}^{\#}=(\xi_1\nu_1,\nu_1;\xi_2\nu_2,\nu_2;\xi_3\nu_3,\nu_3)\]
    with $\xi_1\xi_2\xi_3=\mathbf{1}$ and 
    \[\mathbf{1}=\omega_{\pi_1}\omega_{\pi_2}\omega_{\pi_3} = (\nu_1\nu_2\nu_3)^2(\xi_1\xi_2\xi_3) = (\nu_1\nu_2\nu_3)^2.\]
    We set $\lambda:=\nu_1\nu_2\nu_3$, which is either trivial or of order 2, and apply Lemma \ref{lemma:character-restriction} to $\bm{\chi}^{\#}$ and $\bm{\chi}'^{\#}=(\xi_1,\mathbf{1};\xi_2,\mathbf{1};\lambda \xi_3,\lambda)$ with $\eta_1 = \nu_1^{-1}$, $\eta_2 = \nu_2^{-1}$, $\eta_3 = \nu_1\nu_2$. 
\end{proof}

\subsection{The R-group}\label{subsec:r-group}
We have an identification 
\[
W(\bM^{\#},\bT_M^{\#})\simeq W(\bM,\bT_M)=: W \simeq (\ZZ/2\ZZ)^3
\]
of Weyl groups. For $1\leq i \leq 3$, we denote by $w_i \in W$ the simple reflection at the $i$-th $\GL_2$-factor, and for a subset $S$ of $\{1,2,3\}$ we write $w_S:=\prod_{i\in S}w_i$ (and $w_{\varnothing}:=1$ by definition). To simplify the notations, if we have a concrete $S$ such as $S=\{1,2\}$, we shall simply write $w_S$ as $w_{12}$. Thus the longest element in $W$ would be denoted by $w_{123}$.

Now following the setting in the previous subsection, let ${\bm{\chi}}^{\#}:T_M^{\#}\longrightarrow \CC^\times$ be a character of $T_M^{\#}$ such that $I(\bm{\chi}^{\#})$ is irreducible, and $\bm{\chi}:=\bm{\chi}^{\#}|_{T_M}$. We set $$S_{\bm{\chi}}:=\{1\leq i\leq 3\mid w_i {\bm{\chi}} = {\bm{\chi}}\}$$ and define
\[
W_{{\bm{\chi}}}:=\Stab_{W}({\bm{\chi}}),\qquad W_{{\bm{\chi}}}^0:=\langle w_i\mid i \in S_{{\bm{\chi}}}\rangle,\qquad R_{\bm{\chi}}:=W_{\bm{\chi}}/W_{\bm{\chi}}^0.
\]
The quotient $R_{\bm{\chi}}$ is a generalization of the Knapp-Stein R-group, see for example, \cite{luo}. For explicit computation, we still choose a lift 
\[\bm{\chi}^{\#}:=(\mu_1,\nu_1;\mu_2,\nu_2;\mu_3,\nu_3)\]
of $\bm{\chi}$ (i.e. $\bm{\chi}^{\#}|_{T_M} = \bm{\chi}$) and keep the notation $\xi_i:=\mu_i\nu_i^{-1}$.

\begin{proposition}\label{proposition:r-group-self-twists}
Setting and notations as above, we then have
   \[
    W_{\bm{\chi}}=\{w_S\mid  \xi_i^2=\mathbf{1}\text{ for } i\in S,\text{ and }\prod_{i\in S} \xi_i =\mathbf{1}\}.
   \]
   The homomorphism $\Phi: W_{{\bm{\chi}}}\longrightarrow X_M(\bm{\pi}^{\#})$ given by 
   \[
   \Phi(w_S):=(\eta_{1},\eta_{2},\eta_{3}),\qquad \eta_{i}:=\begin{cases}
       \xi_i,& i\in S,\\ \mathbf{1},& i\notin S
   \end{cases}
   \]
   induces a canonical isomorphism $R_{\bm{\chi}} = W_{{\bm{\chi}}}/W_{{\bm{\chi}}}^0\overset{ \sim}{\longrightarrow}X_M(\bm{\pi}^{\#})$.
\end{proposition}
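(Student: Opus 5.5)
We first compute the action of $W$ on $\bm{\chi}$, using the coordinates $t(a_1,a_2,a_3,c)$ from the proof of Lemma~\ref{lemma:character-restriction}. The reflection $w_i$ swaps the two diagonal entries of the $i$-th factor, so it sends $a_i$ to $ca_i^{-1}$ and fixes $a_j$ ($j\ne i$) and $c$. Write
\[
\bm{\chi}(t)=(\nu_1\nu_2\nu_3)(c)\prod_{i}\xi_i(a_i).
\]
Then
\[
(w_S\bm{\chi})(t)=(\nu_1\nu_2\nu_3)(c)\prod_{i\in S}\xi_i(c)\,\xi_i(a_i)^{-1}\prod_{i\notin S}\xi_i(a_i).
\]
Since $a_1,a_2,a_3,c$ range independently over $F^\times$, the equality $w_S\bm{\chi}=\bm{\chi}$ is equivalent to the two conditions $\xi_i^{-1}=\xi_i$ for $i\in S$ and $\prod_{i\in S}\xi_i=\mathbf 1$. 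This gives the stated description of $W_{\bm{\chi}}$. In particular $S_{\bm{\chi}}=\{i\mid \xi_i=\mathbf 1\}$, so $W^0_{\bm{\chi}}=\langle w_i\mid \xi_i=\mathbf 1\rangle$.

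Next we check that $\Phi$ is well defined and a homomorphism. For $w_S\in W_{\bm{\chi}}$, each $\xi_i$ with $i\in S$ has order $\le 2$, so $\xi_i\in X_i$. Indeed, if $\xi_i$ has order 2 then $X_i=\{\mathbf 1,\xi_i\}$, and if $\xi_i=\mathbf 1$ then $\eta_i=\mathbf 1$. The product condition gives $\eta_1\eta_2\eta_3=\mathbf 1$, so by \eqref{eq:principal-series-self-twists} the triple lies in $X_M(\bm{\pi}^{\#})$. Since $W\simeq(\ZZ/2\ZZ)^3$, we have $w_Sw_{S'}=w_{S\triangle S'}$. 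For $i\in S\cap S'$ the $i$-th component of $\Phi(w_S)\Phi(w_{S'})$ is $\xi_i^2=\mathbf 1$, so $\Phi(w_S)\Phi(w_{S'})=\Phi(w_{S\triangle S'})$ and $\Phi$ is a homomorphism.

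Finally we identify the kernel and prove surjectivity. The kernel consists of those $w_S$ with $\xi_i=\mathbf 1$ for all $i\in S$, which is exactly $W^0_{\bm{\chi}}$; note that every such $w_S$ automatically lies in $W_{\bm{\chi}}$.

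Surjectivity is the step requiring the most care, because of a possible mismatch: an element of $X_M(\bm{\pi}^{\#})$ may have $\eta_i=\xi_i$ in a position where $\xi_i$ is nontrivial, or $\eta_i=\mathbf 1$. Take $(\eta_1,\eta_2,\eta_3)\in X_M(\bm{\pi}^{\#})$. Each $\eta_i\in X_i$, so $\eta_i$ is either $\mathbf 1$ or $\xi_i$, and in the latter case $\xi_i$ has order 2. Let $S=\{i\mid \eta_i\neq\mathbf 1\}$. Then $\xi_i^2=\mathbf 1$ for $i\in S$, and $\prod_{i\in S}\xi_i=\eta_1\eta_2\eta_3=\mathbf 1$. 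Hence $w_S\in W_{\bm{\chi}}$ and $\Phi(w_S)=(\eta_1,\eta_2,\eta_3)$, so $\Phi$ induces the isomorphism $R_{\bm{\chi}}\xrightarrow{\sim} X_M(\bm{\pi}^{\#})$.

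It remains to see that $\Phi$ does not depend on the chosen lift $\bm{\chi}^{\#}$. By Lemma~\ref{lemma:character-restriction}, another lift changes $(\mu_i,\nu_i)$ to $(\mu_i\eta'_i,\nu_i\eta'_i)$, which leaves each $\xi_i$ unchanged. Likewise $X_M(\bm{\pi}^{\#})$ depends only on the $X_i$, which are determined by the $\xi_i$. So $\Phi$ is canonical.
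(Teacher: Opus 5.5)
Your proposal is correct and follows essentially the same route as the paper. It uses the same coordinate computation $w_S\bm{\chi}(t)/\bm{\chi}(t)=\prod_{i\in S}\xi_i(c)\xi_i^{-2}(a_i)$ to describe $W_{\bm{\chi}}$, then identifies $\ker\Phi$ with $W^0_{\bm{\chi}}$ and checks surjectivity. You also correctly spell out details the paper leaves implicit: that $\Phi$ lands in $X_M(\bm{\pi}^{\#})$ and is a homomorphism, the explicit preimage $w_S$ with $S=\{i\mid \eta_i\neq\mathbf{1}\}$, and independence of the chosen lift.
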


\begin{proof}
 
    Direct computation. Using the coordinates 
    \[
    t=t(a_1,a_2,a_3,c):=\left(\begin{bmatrix}
        a_1 & \\ & ca_1^{-1}
    \end{bmatrix},\begin{bmatrix}
        a_2 & \\ & ca_2^{-1}
    \end{bmatrix},\begin{bmatrix}
        a_3 & \\ & ca_3^{-1}
    \end{bmatrix}\right)
    \]
   and recall our notation $\xi_i:=\mu_i \nu_i^{-1}$, we have
\[
{\bm{\chi}}(t)
=
\prod_{i=1}^3 \mu_i(a_i)\nu_i(ca_i^{-1})
=
(\nu_1\nu_2\nu_3)(c)\prod_{i=1}^3 \xi_i(a_i)
\]
and
\[
w_S{\bm{\chi}}(t)
=
\left(\prod_{i\in S}\mu_i(c)\xi_i^{-1}(a_i)\right)
\left(\prod_{i\notin S}\nu_i(c)\xi_i(a_i)\right),
\]
so that
\[
\frac{w_S{\bm{\chi}}(t)}{{\bm{\chi}}(t)}
=\prod_{i\in S}
\frac{\mu_i(c)\xi_i^{-1}(a_i)}{\nu_i(c)\xi_i(a_i)}
=\prod_{i\in S}\xi_i(c)\xi_i^{-2}(a_i).
\]
Thus 
\begin{align*}
    w_S {\bm{\chi}} = {\bm{\chi}}&\Longleftrightarrow \prod_{i\in S} \xi_i(c)\xi_i^{-2}(a_i)=1\text{ for all }c,a_i\in F^\times\\
    &\Longleftrightarrow \xi_i^2=\mathbf{1}\text{  for } i\in S,\text{ and }\prod_{i\in S} \xi_i =\mathbf{1}.
\end{align*}
Clearly $\Phi$ is surjective, and
\[
\ker(\Phi) =\{w_S\mid \xi_i =\mathbf{1} \text{ for }i\in S\}=\langle w_i\mid i\in S_{{\bm{\chi}}}\rangle = W_{{\bm{\chi}}}^0,
\]
hence the isomorphism $R_{\bm{\chi}} = W_{{\bm{\chi}}}/W_{{\bm{\chi}}}^0\overset{ \sim}{\longrightarrow}X_M(\bm{\pi}^{\#})$.
\end{proof}

We thus identify the ``generalized'' R-group $R_{{\bm{\chi}}}$ with $X_M(\bm{\pi}^{\#})$ via this isomorphism.  Next we define 
\begin{equation}\label{eq:standard-whittaker-character}\psi_{U_M}: U_M\longrightarrow \CC^1,\qquad (u(x_1),u(x_2),u(x_3))\longrightarrow \psi(x_1+x_2+x_3),\end{equation}
and fix the Whittaker datum 
\begin{equation}\label{eq:standard-whittaker-datum}\mathfrak{w}_M:=(B_M,\psi_{U_M})\end{equation}
 of the group $M$. 
 
 \begin{remark}
    We should note that $\mathfrak{w}_M$ is by definition the $M$-conjugacy class of the pair $(B_M,\psi_{U_M})$, thus the Whittaker datum $\mathfrak{w}_M$ does not depend on the choice of the character $\psi: F\longrightarrow \CC^1$ (since we can conjugate it by an element of the form $(\diag(a,1),\diag(a,1),\diag(a,1))$). We can also easily check that the number of different Whittaker data of $M$ is 
    \[ [(F^\times)^3:(F^\times)^{\Delta} (F^{\times 2})^3] = [F^\times: F^{\times 2}]^2.\]
    Let us also note that $\mathfrak{w}_M$ admits another representative $(\overline{B_M},\psi_{\overline{U_M}})$, where $\overline{B_M}$ is the opposite lower triangular Borel and 
    \[ \psi_{\overline{U_M}}(\overline{u}(x_1),\overline{u}(x_2),\overline{u}(x_3)):=\psi(x_1+x_2+x_3).\]
    For this it suffices to conjugate $(B_M,\psi_{U_M})$ by $s_0^{\Delta} = (s_0,s_0,s_0)$.
 \end{remark}

 We fix the following choice of representatives for Weyl group elements:
\begin{itemize}
    \item for each $w_i$ we set $$\dot{w}_1:=\left(\begin{bmatrix}
        & 1 \\ -1 &
    \end{bmatrix},I_2,I_2\right)$$
and similarly for $\dot{w}_2,\dot{w}_3$, then we set $\dot{w}_S:=\prod_{i\in S} \dot{w}_i$ for a subset $S\subset \{1,2,3\}$.
\end{itemize}
For $w\in W$, we set
\[ U_w:= U_M\cap w^{-1}\overline{U_M}w,\]
and the standard intertwining operator is defined by the integral
\[
J(w,{\bm{\chi}}):I({\bm{\chi}})\longrightarrow I(w{\bm{\chi}}),\qquad
(J(w,{\bm{\chi}})f)(g)=\int_{U_w}f(\dot{w}^{-1}ug)\,du.
\]
The integral converges only in a suitable positive chamber, and admits meromorphic continuation to general $\bm{\chi}$. Similarly, the Jacquet integral
\[
\ell_{\bm{\chi}}^{\psi}:I(\bm{\chi})\longrightarrow \CC,\qquad\ell_{\bm{\chi}}^{\psi}(f)=\int_{U_M}f(\dot{w}_{123}^{-1}u)\psi_{U_M}(u)^{-1}\,du
\]
is also first defined in a suitable positive chamber and, in the compact realization of each inertial family, admits holomorphic continuation in $\bm{\chi}$; see \cite[\S 4.1]{delorme} (especially (4.1) and what follows). The local coefficient \(C^{\psi}(w,{\bm{\chi}})\in \CC\) as a meromorphic function in $\bm{\chi}$ is defined by
\[
\ell_{w{\bm{\chi}}}^{\psi}\circ J(w,{\bm{\chi}})=C^{\psi}(w,{\bm{\chi}})\ell_{\bm{\chi}}^{\psi}
\]
according to multiplicity one of the Whittaker model. The normalized intertwining operator is then defined to be
\[
R(w,{\bm{\chi}}):=C^{\psi}(w,{\bm{\chi}})^{-1}J(w,{\bm{\chi}}).
\]
characterized by
\begin{equation}\label{eq:whittaker-normalization}
\ell_{w{\bm{\chi}}}^{\psi}\circ R(w,{\bm{\chi}})=\ell_{\bm{\chi}}^{\psi}.
\end{equation}

We suppress the fixed character $\psi$ from the notation for normalized intertwining operators. For $w\in W_{\bm\chi}$, the resulting endomorphism is unchanged on replacing $\psi$ by $\psi_a(x):=\psi(ax)$, since the corresponding pairs are conjugate by $(\diag(a,1),\diag(a,1),\diag(a,1))\in T_M$.

Assume henceforth that $\bm\chi$ is unitary. By the theory of R-groups (see for example \cite{arthur}), $R(w,\bm\chi)$ is then regular and gives a unitary operator for $w\in W$, and we have the cocycle relation
\begin{equation} \label{eq:r-group-cocycle}
R(w',w{\bm{\chi}})\circ R(w,{\bm{\chi}})
=
R(w'w,{\bm{\chi}}),\qquad w,w'\in W
\end{equation}
by our Whittaker normalization. Indeed, on the generic locus the relevant intertwining space is one-dimensional, and Whittaker uniqueness together with the normalization fixes the scalar; the identity then holds everywhere by meromorphic continuation. For the general R-group formalism, see \cite[pp.~86--87]{arthur}.
For clarity, we state the following as a Lemma. 

\begin{lemma}\label{lemma:weyl-kernel-intertwining} We have
\[
R(w,{\bm{\chi}})=\operatorname{id}_{I({\bm{\chi}})},\qquad w\in W_{\bm{\chi}}^0.
\]
\end{lemma}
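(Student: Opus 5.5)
By the cocycle relation \eqref{eq:r-group-cocycle}, it suffices to prove the lemma for each generator $w_i$ with $i\in S_{\bm{\chi}}$. Indeed, $W^0_{\bm{\chi}}$ is generated by these $w_i$, and each such $w_i$ fixes $\bm{\chi}$. So if $w,w'\in W^0_{\bm{\chi}}$ satisfy $R(w,\bm{\chi})=\mathrm{id}$ and $R(w',\bm{\chi})=\mathrm{id}$, then
\[
R(w'w,\bm{\chi})=R(w',w\bm{\chi})\circ R(w,\bm{\chi})=R(w',\bm{\chi})=\mathrm{id}.
\]
So fix $i\in S_{\bm{\chi}}$. By Proposition \ref{proposition:r-group-self-twists}, this means $\xi_i=\mathbf{1}$, i.e. $\mu_i=\nu_i$ for the chosen lift $\bm{\chi}^{\#}$.

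Next I would reduce to $M^{\#}$ and then to a single $\GL_2$ factor. The restriction isomorphism $I(\bm{\chi}^{\#})|_M\simeq I(\bm{\chi})$ is compatible with $J(w_i,-)$, since $U_{w_i}\subset U_M$ and $\dot w_i\in M$. Likewise the Jacquet integral $\ell^{\psi}_{\bm{\chi}}$ is the restriction of the corresponding $M^{\#}$-Jacquet integral. Hence $C^{\psi}(w_i,\bm{\chi})$, as a meromorphic function, agrees with the $M^{\#}$ local coefficient for $w_i\bm{\chi}^{\#}$ along the restricted family. Up to meromorphic continuation this is harmless: one may vary $\bm{\chi}^{\#}$ in $\mu_i|\cdot|^{s}\times\nu_i|\cdot|^{-s}$, which restricts to a family of characters of $T_M$.

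On $M^{\#}=\GL_2^3$ the operators factor as tensor products. $R(w_i,\bm{\chi}^{\#})$ acts as the identity on the factors $j\neq i$, and on the $i$-th factor it is the Whittaker-normalized rank-one operator $R(w,\mu\times\mu):\mu\times\mu\to\mu\times\mu$ for $\GL_2$.

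The remaining, and main, step is the classical rank-one fact that this normalized operator equals the identity at $\mu=\nu$. Since $\mu\times\mu$ is irreducible (unitary induction for $\GL_2$ with $\xi=\mathbf{1}$), Schur's lemma shows that $R(w,\mu\times\mu)$ is a scalar $c$. The normalization \eqref{eq:whittaker-normalization} gives $\ell^{\psi}\circ R=\ell^{\psi}$, and $\ell^{\psi}\neq 0$ by genericity, so $c=1$.

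The subtle point is the regularity of $R(w_i,\cdot)$ at $\bm{\chi}$, given that $J$ has a pole there (the Plancherel zero). This is supplied by the cited R-group theory, which the paper assumes for unitary $\bm{\chi}$. Granting it, $R(w_i,\bm{\chi})$ is a well-defined $M$-endomorphism of $I(\bm{\chi})$. Writing it as the scalar $1$ on the $i$-th factor tensored with the identity elsewhere then gives $R(w_i,\bm{\chi})=\mathrm{id}$.

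One could also argue directly on $M$ without Schur. We know $R(w_i,\bm{\chi})$ equals the restriction of $R(w_i,\bm{\chi}^{\#})$, and the latter is an $M^{\#}$-endomorphism of the irreducible $I(\bm{\chi}^{\#})$, hence a scalar. The Whittaker normalization again forces that scalar to be $1$.
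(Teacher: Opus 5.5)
Your proposal is correct and follows essentially the same route as the paper: reduce to the generators $w_i$, $i\in S_{\bm{\chi}}$, via the cocycle relation; identify $R(w_i,\bm{\chi})$, through restriction from $M^{\#}$ and the tensor factorization, with the rank-one normalized operator on $\mu_i\times\mu_i$ tensored with identities; and conclude by Schur's lemma and the Whittaker normalization. Your closing variant is not actually ``without Schur'': it applies Schur's lemma to the irreducible $I(\bm{\chi}^{\#})$, which is fixed by $w_i$, instead of to the rank-one factor, and it is equally valid.
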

\begin{proof}
It suffices to prove this for $w_i$ with $i\in S_{\bm{\chi}}$. The condition $w_i{\bm{\chi}}={\bm{\chi}}$ implies $\xi_i=\mathbf{1}$, and hence $\mu_i=\nu_i$. Under the restriction isomorphism
\[
I({\bm{\chi}}^{\#})|_M\simeq I({\bm{\chi}}),
\]
the operator $R(w_i,{\bm{\chi}})$ is the restriction of the tensor product of the identity operators on the other two factors with the normalized self-intertwining operator on $\mu_i\times \mu_i$. Since $\mu_i\times \mu_i$ is irreducible, this rank-one self-intertwining operator is a scalar by Schur's lemma, then it must be 1 by the normalization \eqref{eq:whittaker-normalization}. Thus $R(w_i,{\bm{\chi}})=\operatorname{id}_{I({\bm{\chi}})}$, and the claim for general $w\in W_{\bm{\chi}}^0$ follows from the cocycle relation \eqref{eq:r-group-cocycle}. 
\end{proof}

Consequently, we have
\[
R(wu,{\bm{\chi}})
=
R(w,u{\bm{\chi}})\circ R(u,{\bm{\chi}})
=
R(w,{\bm{\chi}}),\qquad w\in W_{\bm{\chi}},~u\in W_{\bm{\chi}}^0,
\]
and for $r\in R_{\bm{\chi}}=W_{\bm{\chi}}/W_{\bm{\chi}}^0$ we shall write directly $R(r,{\bm{\chi}})$. The cocycle relation shows that
\[
R_{\bm{\chi}}\longrightarrow \operatorname{End}_M(I({\bm{\chi}}))^\times,\qquad
r\longmapsto R(r,{\bm{\chi}})
\]
is a group homomorphism, and Harish-Chandra's commuting algebra theorem (see for example, \cite[pp. 86]{arthur}) tells us that this induces a $\mathbb{C}$-algebra isomorphism
\[
\mathbb{C}[R_{\bm{\chi}}]\overset{\sim}{\longrightarrow}\operatorname{End}_M(I({\bm{\chi}})),
\]
or in other words,
\[
\operatorname{End}_M(I({\bm{\chi}}))
=
\bigoplus_{r\in R_{\bm{\chi}}}\mathbb{C}R(r,{\bm{\chi}})
\simeq\mathbb{C}[R_{\bm{\chi}}].
\]
For $\kappa\in\Irr(R_{\bm{\chi}})$, we define the primitive idempotent operator
\[
e_\kappa:=\frac{1}{|R_{\bm{\chi}}|}\sum_{r\in R_{\bm{\chi}}}\kappa(r)R(r,{\bm{\chi}})\in\operatorname{End}_M(I({\bm{\chi}})),
\]
and
\[
\bm{\sigma}_\kappa:=e_\kappa I({\bm{\chi}})
=
\left\{v\in I({\bm{\chi}})\mid R(r,{\bm{\chi}})v=\kappa(r)^{-1}v
\text{ for every }r\in R_{\bm{\chi}}\right\}.
\]
Then the theory of R-groups gives the decomposition
\[
I({\bm{\chi}})
\simeq
\bigoplus_{\kappa\in\Irr(R_{\bm{\chi}})}
\kappa^\vee\boxtimes\bm{\sigma}_\kappa
\]
of $I({\bm{\chi}})$ as a representation of $R_{\bm{\chi}}\times M$ \cite[pp. 87]{arthur}. By the normalization \eqref{eq:whittaker-normalization} we have
\[
\ell_{\bm{\chi}}^\psi\circ R(r,{\bm{\chi}})
=
\ell_{\bm{\chi}}^\psi,\qquad r\in R_{\bm{\chi}},
\]
so that
\[
\ell_{\bm{\chi}}^\psi(v)
=
\ell_{\bm{\chi}}^\psi\bigl(R(r,{\bm{\chi}})v\bigr)
=
\kappa(r)^{-1}\ell_{\bm{\chi}}^\psi(v),\qquad v\in \bm{\sigma}_{\kappa}
\]
and thus $\ell_{\bm{\chi}}^\psi|_{\bm{\sigma}_\kappa}$ vanishes unless $\kappa=\mathbf{1}$. In particular, since $I(\bm{\chi})$ is multiplicity-free, $\bm{\sigma}_{\mathbf{1}}$ is the unique $\mathfrak{w}_M$-generic constituent.

\subsection{Compatibility with restriction}\label{subsec:restriction-compatibility}

We first record the rank-one normalization that will be used on each factor of $M^{\#}$. We equip $F$ with the self-dual Haar measure with respect to $\psi$. Given characters $\mu,\nu: F^\times\longrightarrow \CC^\times$, the standard intertwining operator
\[
J_{\mu,\nu}:I_{\mu,\nu}\longrightarrow I_{\nu,\mu}
\]
is initially defined in the usual convergence chamber by
\[
(J_{\mu,\nu}f)(g):=\int_F f(\dot{w}_0^{-1}u(x)g)\,dx
\]
and is understood elsewhere by meromorphic continuation. The Jacquet integral
\[
\ell_{\mu,\nu}^{\psi}(f):=\int_F f(\dot{w}_0^{-1}u(x))\psi(-x)\,dx
\]
is likewise initially defined in a suitable chamber and admits holomorphic continuation. Let $C^{\psi}(\mu,\nu)$ be the local coefficient determined by
\[
\ell_{\nu,\mu}^{\psi}\circ J_{\mu,\nu}
=C^{\psi}(\mu,\nu)\ell_{\mu,\nu}^{\psi}.
\]
We define the Whittaker-normalized rank-one intertwining operator by
\begin{equation}\label{eq:rank-one-normalized-intertwining}
R_{\mu,\nu}:=C^{\psi}(\mu,\nu)^{-1}J_{\mu,\nu}:I_{\mu,\nu}\longrightarrow I_{\nu,\mu}.
\end{equation}
Thus
\begin{equation}\label{eq:rank-one-whittaker-normalization}
\ell_{\nu,\mu}^{\psi}\circ R_{\mu,\nu}=\ell_{\mu,\nu}^{\psi}.
\end{equation}

For a character
\[
\bm{\chi}^{\#}=(\mu_1,\nu_1;\mu_2,\nu_2;\mu_3,\nu_3)
\]
 of $T_M^{\#}$, we have
\begin{equation}\label{eq:principal-series-tensor-realization}
I(\bm{\chi}^{\#})
\simeq I_{\mu_1,\nu_1}\boxtimes I_{\mu_2,\nu_2}\boxtimes I_{\mu_3,\nu_3}.
\end{equation}
For a subset $S$ of $\{1,2,3\}$, let
\[
R(w_S,\bm{\chi}^{\#}):I(\bm{\chi}^{\#})
\longrightarrow I(w_S\bm{\chi}^{\#})
\]
be our normalized intertwining operator for $M^{\#}= \GL_2^3(F)$. For $1\leq i\leq3$, we set
\[
R_i:=
\begin{cases}
R_{\mu_i,\nu_i}:I_{\mu_i,\nu_i}\longrightarrow I_{\nu_i,\mu_i},&i\in S,\\
\operatorname{id}_{I_{\mu_i,\nu_i}},&i\notin S.
\end{cases}
\]
Under the isomorphism \eqref{eq:principal-series-tensor-realization} we have
\[
J(w_S,\bm{\chi}^{\#})=J_1\otimes J_2\otimes J_3,\qquad J_i:=\begin{cases}
    J_{\mu_i,\nu_i}, & i\in S,\\
    \mathrm{id},& i\notin S.
\end{cases}
\]
The Whittaker functional also admits a tensor decomposition
\[
\ell_{\bm{\chi}^{\#}}^{\psi}
=\ell_{\mu_1,\nu_1}^{\psi}\otimes
\ell_{\mu_2,\nu_2}^{\psi}\otimes
\ell_{\mu_3,\nu_3}^{\psi},
\]
so that the local coefficient decomposes as
\[
C^{\psi}(w_S,\bm{\chi}^{\#})
=\prod_{i\in S}C^{\psi}(\mu_i,\nu_i).
\]
Consequently, the normalized intertwining operator also factors as
\begin{equation}\label{eq:normalized-intertwining-tensor-factorization}
R(w_S,\bm{\chi}^{\#})
=R_1\otimes R_2\otimes R_3.
\end{equation}
Recall that for $\bm{\chi}:=\bm{\chi}^{\#}|_{T_M}$, restriction of functions gives an $M$-isomorphism
\[
\operatorname{res}_{\bm{\chi}^{\#}}:I(\bm{\chi}^{\#})\xrightarrow{\sim}I(\bm{\chi}),
\qquad f\longmapsto f|_M.
\]

\begin{lemma}\label{lemma:normalized-intertwining-restriction}

For $w\in W$, the character $w\bm{\chi}^{\#}$ restricts to $w\bm{\chi}$, and we have
\begin{equation}\label{eq:normalized-intertwining-restriction}
\operatorname{res}_{w\bm{\chi}^{\#}}\circ R(w,\bm{\chi}^{\#})
=R(w,\bm{\chi})\circ\operatorname{res}_{\bm{\chi}^{\#}}
\end{equation}
whenever the operators are regular (in the meromorphic family).
\end{lemma}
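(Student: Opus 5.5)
The plan is to compare the two sides first on the level of unnormalized operators, and then to check that the normalizing scalars agree.

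First, the character statement. For $t\in T_M$ we have $(w\bm{\chi}^{\#})(t)=\bm{\chi}^{\#}(\dot w^{-1}t\dot w)$. Since $\dot w\in M$ (each $\dot w_i$ has determinant $1$ in every factor), $\dot w^{-1}t\dot w\in T_M$. Hence $(w\bm{\chi}^{\#})|_{T_M}=w\bm{\chi}$.

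Next, the unnormalized intertwining operators. The Weyl representatives $\dot w_S$ lie in $M$. The unipotent groups coincide: $U_w^{\#}:=U_M^{\#}\cap w^{-1}\overline{U_M^{\#}}w$ equals $U_w$, because $U_M^{\#}=U_M\subset M$. We use the same Haar measure on $U_w$ in both settings, namely the product of self-dual measures on the copies of $F$. Then for $f\in I(\bm{\chi}^{\#})$ and $g\in M$,
\[
(J(w,\bm{\chi}^{\#})f)(g)=\int_{U_w}f(\dot w^{-1}ug)\,du=(J(w,\bm{\chi})(f|_M))(g).
\]
This identity holds on the convergence chamber, with parameters taken in an unramified twist family. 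The identity $\operatorname{res}_{w\bm{\chi}^{\#}}\circ J(w,\bm{\chi}^{\#})=J(w,\bm{\chi})\circ\operatorname{res}_{\bm{\chi}^{\#}}$ then extends by meromorphic continuation. For this step we need the twist families to be compatible. Twisting $\bm{\chi}^{\#}$ by unramified characters of $T_M^{\#}$ restricts to twisting $\bm{\chi}$ by their restrictions to $T_M$. Every unramified character of $T_M$ arises this way, since $T_M^{\#}/T_M\simeq (F^\times)^2$ is a torus quotient and the restriction of unramified characters is surjective. Moreover, the restriction isomorphism is compatible with the compact realizations (restriction to the maximal compact $K_M\subset K$, using $M^{\#}=B_M^{\#}M$). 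So the meromorphic families correspond.

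Then the Jacquet integrals. The same reasoning applies: $\dot w_{123}\in M$, $U_M$ is common to both groups, and $\psi_{U_M}$ is the restriction of the generic character of $U_M^{\#}$ given by the tensor product of the rank-one characters. Therefore $\ell^{\psi}_{\bm{\chi}^{\#}}=\ell^{\psi}_{\bm{\chi}}\circ\operatorname{res}_{\bm{\chi}^{\#}}$, first on the convergence region and then everywhere by holomorphic continuation. Applying this to $w\bm{\chi}^{\#}$ and composing with the previous identity gives
\[
C^{\psi}(w,\bm{\chi}^{\#})\,\ell^{\psi}_{\bm{\chi}}\circ\operatorname{res}
=\ell^{\psi}_{w\bm{\chi}}\circ J(w,\bm{\chi})\circ\operatorname{res}
=C^{\psi}(w,\bm{\chi})\,\ell^{\psi}_{\bm{\chi}}\circ\operatorname{res}.
\]
Since $\ell^{\psi}_{\bm{\chi}}\ne0$ and $\operatorname{res}$ is bijective, the local coefficients agree as meromorphic functions. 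Dividing by this common scalar yields \eqref{eq:normalized-intertwining-restriction} wherever both sides are regular.

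The main obstacle is the meromorphic continuation bookkeeping: making sure that the two families of parameters (over $T_M^{\#}$ and over $T_M$) and their compact realizations are matched. Once the identity is established on an open set of the common parameter space, it propagates by analytic continuation. The integral identities themselves are immediate.
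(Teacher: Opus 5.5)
Your proposal is correct and follows the paper's proof. Because $U_M$ is common to $M^{\#}$ and $M$ and the representatives $\dot w$ lie in $\SL_2^3(F)\subset M$, both the unnormalized intertwining integrals and the Jacquet integrals agree after restriction; this forces $C^{\psi}(w,\bm{\chi}^{\#})=C^{\psi}(w,\bm{\chi})$, and dividing by it gives the identity. Your extra bookkeeping (the character restriction, and matching the unramified twist families and compact realizations) fills in the continuation step that the paper leaves implicit.
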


\begin{proof}

The unipotent radicals are the same for $M^{\#}$ and $M$, and the pinned representatives $\dot{w}$ lie in $\SL_2^3(F)\subset M$. Thus the intertwining integrals defined for $M^{\#}$ and  $M$ are literally the same after restriction, i.e.  
\begin{equation}\label{eq:standard-intertwining-restriction}
\operatorname{res}_{w\bm{\chi}^{\#}}\circ J(w,\bm{\chi}^{\#})
=J(w,\bm{\chi})\circ\operatorname{res}_{\bm{\chi}^{\#}}.
\end{equation}
The same observation for the longest Weyl element and the Jacquet integral gives
\begin{equation}\label{eq:jacquet-integral-restriction}
\ell_{\bm{\chi}^{\#}}^{\psi}
=\ell_{\bm{\chi}}^{\psi}\circ\operatorname{res}_{\bm{\chi}^{\#}},
\qquad
\ell_{w\bm{\chi}^{\#}}^{\psi}
=\ell_{w\bm{\chi}}^{\psi}\circ\operatorname{res}_{w\bm{\chi}^{\#}},
\end{equation}
so that $C^{\psi}(w,\bm{\chi}^{\#})=C^{\psi}(w,\bm{\chi})$ by the definition of local coefficients and \eqref{eq:standard-intertwining-restriction} \eqref{eq:jacquet-integral-restriction}. Dividing \eqref{eq:standard-intertwining-restriction} by $C^{\psi}(w,\bm{\chi})$ gives \eqref{eq:normalized-intertwining-restriction}. 
\end{proof}

In particular, for $w\in W_{\bm{\chi}}$ with its image $r$ in $R_{\bm{\chi}}$, we have $w\bm{\chi}=\bm{\chi}$ and $R(w,\bm{\chi})=R(r,\bm{\chi})$. Since $w\bm{\chi}^{\#}$ is again an extension of $\bm{\chi}$, we obtain the commutative diagram
\[
\begin{tikzcd}[column sep=huge,row sep=large]
I(\bm{\chi}^{\#})
\arrow[r,"{R(w,\bm{\chi}^{\#})}"]
\arrow[d,"\operatorname{res}_{\bm{\chi}^{\#}}"']
& I(w\bm{\chi}^{\#})
\arrow[d,"\operatorname{res}_{w\bm{\chi}^{\#}}"]\\
I(\bm{\chi})
\arrow[r,"{R(r,\bm{\chi})}"']
& I(\bm{\chi}),
\end{tikzcd}
\]
where both vertical arrows are $M$-isomorphisms.
% ===== End 02_repn of M.tex =====

\section{Statement of the results}\label{sec:results}
% ===== Begin 03_results.tex =====
In this section, we assume $\bm{\chi}^{\#}$ to be unitary, and $\bm{\pi}^{\#} = I(\bm{\chi}^{\#})$ with its decomposition 
\[
\bm{\pi}^{\#}|_M \simeq \bigoplus_{\kappa\in \Irr(R_{\bm{\chi}})} \bm{\sigma}_{\kappa}.
\]
We also assume that the central character of $\bm{\pi}^{\#}$ is trivial on the diagonal $Z^{\Delta}$, so that
\[\dim \Hom_{G^\Delta}(\bm{\pi}^{\#},\mathbf{1})=1\]
by \cite[Theorems 1.1 and 1.2]{prasad}.

\subsection{The order 4 case}\label{subsec:order-four}
If $|R_{\bm{\chi}}|=4$, then by Corollary \ref{corollary:order-4-canonical-form} we can replace $\bm{\chi}^{\#}$ by a ``canonical lift'' 
$$\bm{\chi}^{\#} = (\rho,\rho^{-1};\rho,\rho^{-1};\rho,\rho^{-1})$$ for some character $\rho:F^\times \longrightarrow \CC^1$ of order 4, without changing the restriction $I(\bm{\chi})$, $\bm{\chi}:=\bm{\chi}^{\#}|_{T_M}$. Then $\bm{\pi}^{\#} = \pi_1\boxtimes \pi_2\boxtimes \pi_3$ with each $\pi_i=\rho \times \rho^{-1}$, and we put $\xi:=\rho^2$. The purpose of this subsection is to prove the following theorem.

\begin{theorem}\label{theorem:order-four-distinction}
    In the above setting, we have $$\dim \Hom_{G^\Delta}(\bm{\sigma}_{\mathbf{1}},\mathbf{1})=1.$$ In other words, the $G^\Delta$-distinguished member in the order 4 L-packet $\{\bm{\sigma}_{\kappa}\}_{\kappa\in \Irr(R_{\bm{\chi}})}$ is the unique $\mathfrak{w}_M$-generic one.
\end{theorem}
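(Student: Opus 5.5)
We will use the $S_3$-symmetry sketched in the introduction. Let $S_3$ act on $M^{\#}$ and $M$ by permuting the three $\GL_2$-factors, and for $\tau\in S_3$ write $\tau(m)$ for the permuted element. With the canonical lift $\bm{\chi}^{\#}=(\rho,\rho^{-1};\rho,\rho^{-1};\rho,\rho^{-1})$ the three factors of $\bm{\pi}^{\#}$ coincide. Hence the operator
\[
A_\tau:\bm{\pi}^{\#}\to\bm{\pi}^{\#},\qquad v_1\otimes v_2\otimes v_3\mapsto v_{\tau^{-1}(1)}\otimes v_{\tau^{-1}(2)}\otimes v_{\tau^{-1}(3)}
\]
is a well-defined linear isomorphism satisfying $A_\tau\circ\bm{\pi}^{\#}(m)=\bm{\pi}^{\#}(\tau(m))\circ A_\tau$. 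Since $\tau$ fixes $G^\Delta$ pointwise, $L\mapsto L\circ A_\tau$ preserves $\Hom_{G^\Delta}(\bm{\pi},\mathbf{1})$. This space is one-dimensional by Prasad's theorem and equals $\bigoplus_\kappa\Hom_{G^\Delta}(\bm{\sigma}_\kappa,\mathbf{1})$. Moreover $A_\tau$ maps the isotypic piece $\bm{\sigma}_\kappa$ onto the unique constituent isomorphic to $\bm{\sigma}_\kappa\circ\tau^{-1}$, or equivalently $\bm{\sigma}_\kappa\circ \tau$; this is again a member of the packet, by multiplicity freeness. So $S_3$ permutes the four members. If $L$ is supported on $\bm{\sigma}_\kappa$, then $L\circ A_\tau$ is supported on $A_\tau^{-1}(\bm{\sigma}_\kappa)$. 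Because it is proportional to $L$, the distinguished member must be fixed by every $\tau\in S_3$.

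It remains to compute the $S_3$-action on the labels $\kappa\in\Irr(R_{\bm{\chi}})$, and this is the main step. By Proposition \ref{proposition:r-group-self-twists}, $R_{\bm{\chi}}=\{1,w_{12},w_{13},w_{23}\}$, and $S_3$ permutes its three nontrivial elements transitively. I will show that
\[
A_\tau\circ R(w_S,\bm{\chi})=R(w_{\tau(S)},\bm{\chi})\circ A_\tau
\]
under the identification $I(\bm{\chi}^{\#})|_M\simeq I(\bm{\chi})$. By Lemma \ref{lemma:normalized-intertwining-restriction} and \eqref{eq:normalized-intertwining-tensor-factorization}, $R(w_S,\bm{\chi}^{\#})$ is a tensor product of copies of $R_{\rho,\rho^{-1}}$ and identities. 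The factor $R_{\rho,\rho^{-1}}$ is the same operator in each slot, because the representatives $\dot w_i$, the measures and $\psi$ are identical across the three factors. Permuting tensor factors therefore conjugates $R_1\otimes R_2\otimes R_3$ to the permuted tensor product.

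One point needs care: $R(w_S,\bm{\chi}^{\#})$ lands in $I(w_S\bm{\chi}^{\#})$, not $I(\bm{\chi}^{\#})$. The relation must be checked after restricting to $M$, with the analogous permutation operator on $I(w_S\bm{\chi}^{\#})$ compatible under the two restriction maps. This is immediate, since restriction commutes with permuting factors.

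Given this relation, $A_\tau$ carries the $\kappa$-eigenspace to the $(\kappa\circ\tau^{-1})$-eigenspace, where $S_3$ acts on the nontrivial elements of $R_{\bm{\chi}}$ by $w_S\mapsto w_{\tau(S)}$. Hence $S_3$ acts on $\Irr(R_{\bm{\chi}})\simeq\Irr((\ZZ/2\ZZ)^2)$ through the induced permutation of characters. The trivial character is fixed. The three nontrivial characters are those with kernel $\{1,w_S\}$, and these are permuted transitively, so none of them is $S_3$-fixed. Therefore the distinguished member is $\bm{\sigma}_{\mathbf{1}}$, which is the $\mathfrak{w}_M$-generic member by the end of \S\ref{subsec:r-group}.

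Existence of the distinguished member is already given by $\dim\Hom_{G^\Delta}(\bm{\pi}^{\#},\mathbf{1})=1$. Since that space decomposes as $\bigoplus_\kappa\Hom_{G^\Delta}(\bm{\sigma}_\kappa,\mathbf{1})$, we get $\dim\Hom_{G^\Delta}(\bm{\sigma}_{\mathbf{1}},\mathbf{1})=1$.
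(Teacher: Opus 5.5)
Your proof is correct and is essentially the paper's argument: your $A_\tau$ is the paper's $P_\tau$, and your intertwining relation is the paper's identity $P_\tau\circ R(r,\bm{\chi})\circ P_\tau^{-1}=R(\tau\cdot r,\bm{\chi})$, obtained the same way from the tensor factorization and Lemma~\ref{lemma:normalized-intertwining-restriction}; the conclusion (the distinguished label must be $S_3$-fixed, and $S_3$ permutes the three nontrivial characters transitively) is also the same. One harmless slip: for a $3$-cycle $\tau$ and nontrivial $\kappa$, the representations $\bm{\sigma}_\kappa\circ\tau^{-1}$ and $\bm{\sigma}_\kappa\circ\tau$ are different members of the packet, so the ``or equivalently'' is wrong, but nothing in your argument depends on it.
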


For $\tau\in S_3$, we define the permutation map
\[
P_\tau: \bm{\pi}^{\#} = (\rho\times \rho^{-1})^{\boxtimes 3}\longrightarrow (\rho\times \rho^{-1})^{\boxtimes 3}= \bm{\pi}^{\#},\qquad v_1\otimes v_2\otimes v_3\longmapsto v_{\tau^{-1}(1)}\otimes v_{\tau^{-1}(2)}\otimes v_{\tau^{-1}(3)},
\]

which is a linear bijection with inverse $P_{\tau^{-1}}$ and satisfies $P_\tau P_{\tau'}=P_{\tau\tau'}$ for $\tau,\tau'\in S_3$. Since in our current case $\pi_1= \pi_2 = \pi_3\simeq  \rho\times \rho^{-1}$, the map $P_\tau$ is clearly a $G^\Delta$-equivariant isomorphism. We take a nonzero trilinear form 
\[
0\neq L \in \Hom_{G^\Delta}(\bm{\pi}^{\#}, \mathbf{1}),
\]
then by multiplicity one there is a unique constant $c_\tau\in \CC^\times$ such that $L\circ P_\tau = c_\tau L$, and 
\[
c: S_3\longrightarrow \CC^\times,\qquad \tau\longmapsto c_\tau
\]
is clearly a character of the finite group $S_3$. Recall that 
\[
X_M(\bm{\pi}^{\#}) = \{\mathbf{1},(\mathbf{1},\xi,\xi),(\xi,\mathbf{1},\xi),(\xi,\xi,\mathbf{1})\}\simeq (\ZZ/2\ZZ)^2.
\]
We let $S_3$ act on $X_M(\bm{\pi}^{\#})$ by the left permutation, namely 
\[
\tau\cdot(\eta_1,\eta_2,\eta_3):=(\eta_{\tau^{-1}(1)},\eta_{\tau^{-1}(2)},\eta_{\tau^{-1}(3)}),\qquad\tau\in S_3,~(\eta_1,\eta_2,\eta_3)\in X_M(\bm{\pi}^{\#}),
\]
and transport this action to $R_{\bm{\chi}}$ via the isomorphism $R_{\bm\chi}\xrightarrow{\sim}X_M(\bm\pi^{\#})$ in Proposition \ref{proposition:r-group-self-twists}. Then we define the dual action of $S_3$ on $\Irr(R_{\bm\chi})$ by
\[
\tau(\kappa)(r):=\kappa(\tau^{-1}\cdot r),\qquad \tau\in S_3,~r\in R_{\bm\chi},~\kappa\in \Irr(R_{{\bm{\chi}}}).
\]

\begin{lemma}\label{lemma:order-four-permutation-operators}
   In the above setting, we have 
    \[
    P_\tau(\bm{\sigma}_\kappa)=\bm{\sigma}_{\tau(\kappa)},\qquad \tau\in S_3,~\kappa\in \Irr(R_{{\bm{\chi}}})
    \]
\end{lemma}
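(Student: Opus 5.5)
The plan is to show that $P_\tau$ conjugates the normalized intertwining operators according to the $S_3$-action on $R_{\bm{\chi}}$:
\[
P_\tau\circ R(r,\bm{\chi})\circ P_\tau^{-1}=R(\tau\cdot r,\bm{\chi}),\qquad r\in R_{\bm{\chi}},~\tau\in S_3,
\]
where we regard $P_\tau$ as acting on $I(\bm{\chi})$ via the restriction isomorphism $\operatorname{res}_{\bm{\chi}^{\#}}$. The lemma then follows formally. If $v\in\bm{\sigma}_\kappa$, then
\[
R(r,\bm{\chi})P_\tau v=P_\tau R(\tau^{-1}\cdot r,\bm{\chi})v=\kappa(\tau^{-1}\cdot r)^{-1}P_\tau v=\tau(\kappa)(r)^{-1}P_\tau v
\]
for every $r$. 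Hence $P_\tau(\bm{\sigma}_\kappa)\subset\bm{\sigma}_{\tau(\kappa)}$. Equality follows by applying the same inclusion to $\tau^{-1}$, or by bijectivity of $P_\tau$ together with the decomposition of $I(\bm{\chi})$.

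First I will realize $P_\tau$ on functions. Let $\tau$ act on $M^{\#}$ by permuting factors, $\tau(g_1,g_2,g_3)=(g_{\tau^{-1}(1)},g_{\tau^{-1}(2)},g_{\tau^{-1}(3)})$. For any $\bm{\chi}'^{\#}=(\mu'_1,\nu'_1;\ldots)$, set $(P_\tau f)(g):=f(\tau^{-1}(g))$. This maps $I(\bm{\chi}'^{\#})$ to $I(\tau\bm{\chi}'^{\#})$, where $\tau\bm{\chi}'^{\#}$ has the pairs $(\mu'_i,\nu'_i)$ permuted. On pure tensors $f_1\otimes f_2\otimes f_3$ it is exactly $v_1\otimes v_2\otimes v_3\mapsto v_{\tau^{-1}(1)}\otimes v_{\tau^{-1}(2)}\otimes v_{\tau^{-1}(3)}$, so for the symmetric $\bm{\chi}^{\#}=(\rho,\rho^{-1};\rho,\rho^{-1};\rho,\rho^{-1})$ it recovers the map of the statement. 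Since $\tau$ preserves $M$, $B_M$ and $T_M$, $P_\tau$ commutes with restriction to $M$. That is, $\operatorname{res}_{\tau\bm{\chi}'^{\#}}\circ P_\tau=P_\tau\circ\operatorname{res}_{\bm{\chi}'^{\#}}$, with $P_\tau$ on the right defined the same way on $I(\bm{\chi}')$.

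Next I will use the factorization \eqref{eq:normalized-intertwining-tensor-factorization}, $R(w_S,\bm{\chi}^{\#})=R_1\otimes R_2\otimes R_3$, where $R_i=R_{\rho,\rho^{-1}}$ for $i\in S$ and $R_i=\mathrm{id}$ otherwise. All three factors are identical and are normalized with the same $\psi$ and the same local coefficient $C^\psi(\rho,\rho^{-1})$. So permuting tensor factors gives
\[
P_\tau\circ R(w_S,\bm{\chi}^{\#})\circ P_\tau^{-1}=R(w_{\tau(S)},\bm{\chi}^{\#})\colon I(\bm{\chi}^{\#})\to I(w_{\tau(S)}\bm{\chi}^{\#}).
\]
Here I use that $\tau(w_S\bm{\chi}^{\#})=w_{\tau(S)}\bm{\chi}^{\#}$. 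Restricting to $M$ via Lemma \ref{lemma:normalized-intertwining-restriction} and the compatibility of $P_\tau$ with restriction yields $P_\tau R(w_S,\bm{\chi})P_\tau^{-1}=R(w_{\tau(S)},\bm{\chi})$ on $I(\bm{\chi})$ for $w_S\in W_{\bm{\chi}}$. Finally, under the isomorphism $\Phi$ of Proposition \ref{proposition:r-group-self-twists}, $w_S\mapsto(\eta_i)$ with $\eta_i=\xi$ exactly for $i\in S$. Therefore $w_{\tau(S)}\mapsto\tau\cdot\Phi(w_S)$, which is the claimed conjugation formula.

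The main obstacle is bookkeeping rather than analysis. One must check that the function-theoretic $P_\tau$ agrees with the tensor-permutation $P_\tau$ under \eqref{eq:principal-series-tensor-realization}. One must also check that the permutation conventions match: the $\tau^{-1}$ in the definition of $P_\tau$ has to correspond to the left action $\tau\cdot(\eta_1,\eta_2,\eta_3)$ on $X_M(\bm{\pi}^{\#})$. I would verify both directly on pure tensors, and check that the intermediate spaces $I(w_S\bm{\chi}^{\#})$, which are not symmetric, are permuted compatibly.
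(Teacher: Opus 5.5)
Your proposal is correct and follows the paper's proof. Both derive $P_\tau\circ R(r,\bm{\chi})\circ P_\tau^{-1}=R(\tau\cdot r,\bm{\chi})$ from the tensor factorization \eqref{eq:normalized-intertwining-tensor-factorization} and Lemma~\ref{lemma:normalized-intertwining-restriction}; the paper then reindexes the idempotents to get $P_\tau e_\kappa P_\tau^{-1}=e_{\tau(\kappa)}$, whereas you use the equivalent eigenvector description of $\bm{\sigma}_\kappa$, and your checks of the permutation conventions and of the non-symmetric intermediate spaces $I(w_S\bm{\chi}^{\#})$ spell out what the paper leaves implicit.
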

\begin{proof}

Recall that for $r\in R_{\bm\chi}$, we denote by $R(r,\bm\chi)$ the corresponding normalized self-intertwining operator. The factorization \eqref{eq:normalized-intertwining-tensor-factorization}, together with Lemma \ref{lemma:normalized-intertwining-restriction}, gives
\[
P_\tau \circ R(r,\bm\chi)\circ P_\tau^{-1}=R(\tau\cdot r,\bm\chi).
\]
For $\kappa\in \Irr(R_{\bm\chi})$ and
\[
e_\kappa=\frac{1}{|R_{\bm\chi}|}\sum_{r\in R_{\bm\chi}}\kappa(r)R(r,\bm\chi),
\]
then reindexing the sum by $r\mapsto\tau\cdot r$ yields $P_\tau \circ e_\kappa\circ  P_\tau^{-1}=e_{\tau(\kappa)}$.  Since $\bm{\sigma}_\kappa=e_\kappa I(\bm\chi)$, the assertion follows.
\end{proof}

\begin{proof}[Proof of Theorem~\ref{theorem:order-four-distinction}]
    For $\kappa\in\Irr(R_{\bm\chi})$, let $L_\kappa:=L|_{\bm{\sigma}_\kappa}$.  Since $P_\tau(\bm{\sigma}_\kappa)=\bm{\sigma}_{\tau(\kappa)}$ and $L\circ P_\tau=c_\tau L$ with $c_\tau\ne0$,
    \[
    L_\kappa\ne0\quad\Longleftrightarrow\quad L_{\tau(\kappa)}\ne0.
    \]
   From $\dim\Hom_{G^\Delta}(\bm\pi^{\#},\mathbf{1})=1$ and $\bm\pi^{\#}|_M$ is multiplicity free, exactly one restriction $L_\kappa$ is nonzero and its label is therefore fixed by $S_3$.  Under $R_{\bm\chi}\simeq(\mathbb Z/2\mathbb Z)^2$, the group $S_3$ permutes the three nontrivial elements transitively, hence the trivial character is the only $S_3$-fixed element of $\Irr(R_{\bm\chi})$ and $\kappa=\mathbf{1}$.
\end{proof}

\subsection{The order 2 case: statements}\label{subsec:order-two}

Suppose $|X_M(\bm{\pi}^{\#})|=2$, and we write 
\[
\bm{\pi}^{\#}|_M \simeq \bm{\sigma}_+ \oplus \bm{\sigma}_-,
\]
where we recall that $\bm{\sigma}_+\in \Irr(M)$ is the unique $\mathfrak{w}_M$-generic member. 

\begin{theorem}\label{theorem:order-2-main-theorem}
    Notations as above.
    \begin{enumerate}
        \item Suppose $\{\bm{\sigma}_+,\bm{\sigma}_-\}$ is of type I, and up to permutation $\xi_1 = \xi_2 = \xi$ for some character $\xi:F^\times \longrightarrow \CC^\times$ of order 2 and $\xi_3 \neq \xi$. We set $s:=\xi(-1)$, then
        \[
        \dim \Hom_{G^\Delta}(\bm{\sigma}_s, \mathbf{1}) = 1.
        \]
        \item\label{item:order-two-distinction-type-ii} Suppose $\{\bm{\sigma}_+,\bm{\sigma}_-\}$ is of type II, so that $\xi_1,\xi_2,\xi_3$ are all of order 2 and mutually distinct, with $\xi_1\xi_2\xi_3=\mathbf{1}$. We may assume 
        \[\bm{\pi}^{\#}=(\xi_1\times \mathbf{1})\boxtimes(\xi_2\times \mathbf{1})\boxtimes(\lambda\xi_3\times \lambda) \]
        for a unitary character $\lambda$ with $\lambda^2=\mathbf{1}$. In this case we set $Y:=\{\mathbf{1},\xi_1,\xi_2,\xi_3\}$ and
        \[
        s:=\prod_{\eta\in Y}\varepsilon(\frac{1}{2}, \lambda\eta,\psi)\in \{\pm 1\},
        \]
       then we have
        \[
        \dim \Hom_{G^\Delta}(\bm{\sigma}_s, \mathbf{1}) = 1.
        \]
    \end{enumerate}
\end{theorem}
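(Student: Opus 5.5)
We fix a nonzero $L\in\Hom_{G^\Delta}(\bm{\pi}^{\#},\mathbf{1})$ and a nontrivial $r\in R_{\bm\chi}$, represented by $w=w_{12}$ in type I and $w=w_{123}$ in type II. Since $R(r,\bm\chi)\in\End_M(I(\bm\chi))$ commutes with $G^\Delta$, $L\circ R(r,\bm\chi)$ again lies in the one-dimensional space $\Hom_{G^\Delta}(\bm\pi,\mathbf 1)$, so $L\circ R(r,\bm\chi)=cL$ for some $c$. From $R(r,\bm\chi)^2=\mathrm{id}$ (cocycle relation and Lemma~\ref{lemma:weyl-kernel-intertwining}) we get $c=\pm1$. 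By definition $R(r,\bm\chi)$ acts on $\bm\sigma_\pm$ by $\pm1$. Hence $L$ vanishes on $\bm\sigma_{-c}$, and by multiplicity one $\bm\sigma_c$ is the distinguished member. Everything therefore reduces to computing $c$. By Lemma~\ref{lemma:normalized-intertwining-restriction} and \eqref{eq:normalized-intertwining-tensor-factorization} we may work on $M^{\#}$ with the canonical forms of Corollary~\ref{corollary:order-2-canonical-form}. There $R(w,\bm\chi^{\#})=R_1\otimes R_2\otimes R_3$ maps $I(\bm\chi^{\#})$ to $I(w\bm\chi^{\#})$. The target is a different model of the same $M$-representation, related to the source by twisting by the element $\bm\eta^{\#}\in X_M(\bm\pi^{\#})$, i.e. by multiplication by $\bm\eta^{\#}\circ\det$, which is trivial on $M$.

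\textbf{Type I.} Take $\bm\pi^{\#}=(\xi\times\mathbf 1)\boxtimes(\xi\times\mathbf 1)\boxtimes(\alpha\times\alpha^{-1})$. Realize $L$ through Prasad's Mackey filtration of $(I_{\xi,1}\otimes I_{\xi,1})|_{G^\Delta}$: the open $G^\Delta$-orbit on $B\backslash G\times B\backslash G$ has stabilizer the diagonal torus $T$. The open-orbit piece identifies $L$ with the $T$-period of the third factor $\alpha\times\alpha^{-1}$ against the character induced from the first two factors. Concretely, for $f_1\otimes f_2$ supported in the open orbit we have
\[
L(f_1\otimes f_2\otimes f_3)=\int_{T\backslash G}f_1(g)f_2(s_0g)\,\ell_T(\pi_3(g)f_3)\,dg,
\]
where $\ell_T$ is an explicit torus period on $\alpha\times\alpha^{-1}$, given by an integral over $F^\times$.

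Next compute $R_{\xi,1}\otimes R_{\xi,1}$ on such test vectors. The rank-one normalized operator $R_{\xi,1}$ is $\gamma(0,\xi,\psi)^{-1}$ times a Fourier-type transform, and the normalizing factor squared gives $\gamma(0,\xi,\psi)^{-2}=\xi(-1)$ when $\xi$ is quadratic unitary. Composing with the twist by $(\xi,\xi,\mathbf 1)\circ\det$ amounts to swapping the two points of the open orbit, i.e. the flip $g\mapsto s_0g$. Comparing with $L$ on the open orbit should yield $c=\xi(-1)$. This comparison requires tracking how the pair of intertwining integrals preserves the open-orbit functional, up to the modulus and character factors on $T$. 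It is the main computational step of this case.

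\textbf{Type II.} Take $\bm\pi^{\#}=(\xi_1\times\mathbf 1)\boxtimes(\xi_2\times\mathbf 1)\boxtimes(\lambda\xi_3\times\lambda)$ and $r=w_{123}$. Here I would transport the computation of Venkatesh--Wang \cite[Lemma 11.4.2]{venkatesh-wang}. They model the three principal series of $\PGL_2$ on functions on a suitable space, express $R$ as a Fourier transform, and evaluate the sign by which the trilinear form transforms as a product of epsilon factors. The steps are as follows. First, twist each factor to a $\PGL_2$ representation; this uses $\lambda$ and the quadratic characters, and is legitimate since the twist by $(\eta_i)$ with $\prod\eta_i=\mathbf 1$ does not change $\bm\pi$ by Lemma~\ref{lemma:character-restriction}. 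Second, match their normalization of the intertwining operators with our Whittaker normalization \eqref{eq:rank-one-whittaker-normalization}, which introduces factors $\varepsilon(\tfrac12,\cdot,\psi)$ and central-sign factors such as $\xi_i(-1)$. Third, check that the product of these factors collapses to $\prod_{\eta\in Y}\varepsilon(\tfrac12,\lambda\eta,\psi)$. The main obstacle I expect is exactly this bookkeeping of normalizations and models, together with extending from $\PGL_2^3$ to $M$. The extension should be handled by noting that $G^\Delta\subset M$ and that all operators are restrictions from $M^{\#}$.
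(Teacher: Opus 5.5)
Your reduction to the sign $c$ in $L\circ R(r,\bm\chi)=cL$ matches the paper. But computing $c$ is the whole content of the theorem, and in type I you stop at ``should yield'' with a mechanism that does not work. No normalizing factor of this kind squares to $\xi(-1)$ in general: for unramified quadratic $\xi$ and unramified $\psi$, $\gamma(0,\xi,\psi)=2/(1+q_F^{-1})$. Multiplication by $(\xi,\xi,\mathbf{1})\circ\det$ is a multiplication operator, not a swap of the two points of the open orbit. Moreover, the intertwining integrals do not preserve sections supported on the open orbit (they produce nonzero values on the diagonal). So the open-orbit description of $L$ gives no direct handle on $L\circ(R_{\xi,\mathbf{1}}\otimes R_{\xi,\mathbf{1}}\otimes\mathrm{id})$.

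The missing idea is to use the open orbit only for the tensor flip $P_{12}$. The operator $A_\xi^\psi:=D_\xi\circ R_{\xi,\mathbf{1}}$ is an involution of $V_\xi$ with $A_\xi^\psi\pi_\xi(g)=\xi(\det g)\pi_\xi(g)A_\xi^\psi$. On the fixed model $V_\xi\otimes V_\xi\otimes W_\alpha$, the operator $R(r_{12},\bm\chi)$ becomes $A_1A_2=A_\xi^\psi\otimes A_\xi^\psi\otimes\mathrm{id}$. Prasad's filtration and Frobenius reciprocity identify trilinear forms on $V_\xi\otimes V_\xi\otimes W_\gamma$ with $\Hom_T(W_\gamma,\xi\boxtimes\xi)$, carrying $P_{12}$ to $\tau_\gamma(s_0)$. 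The explicit torus functional then shows that every such form satisfies $L\circ P_{12}=\gamma(-1)L$, for every unitary $\gamma$. Apply this with $\gamma=\alpha$ and $\gamma=\xi\alpha$. Compare the two via the $G^\Delta$-isomorphisms $T_1=A_\xi^\psi\otimes\mathrm{id}\otimes D_\xi$ and $T_2=\mathrm{id}\otimes A_\xi^\psi\otimes D_\xi$ from $V_\xi\otimes V_\xi\otimes W_{\xi\alpha}$ to $V_\xi\otimes V_\xi\otimes W_\alpha$, which satisfy $T_2P_{12}=P_{12}T_1$. This gives $L\circ A_1=\xi(-1)L\circ A_2$, hence $L\circ A_1A_2=\xi(-1)L$ since $(A_\xi^\psi)^2=\mathrm{id}$. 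The sign is the ratio $(\xi\alpha)(-1)/\alpha(-1)$ of two flip eigenvalues, not a gamma factor.

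In type II the tool is right, but your first step fails in general. Twisting never changes $\omega_i(-1)=(\mu_i\nu_i)(-1)=\xi_i(-1)$, so the $i$-th factor can be made a $\PGL_2(F)$-representation only if $\xi_i(-1)=1$. Over $F=\mathbb{Q}_3$, type II forces $\{\xi_1,\xi_2,\xi_3\}$ to be the three nontrivial quadratic characters, two of which are odd. So there is no reduction to $\PGL_2^3$, and ``all operators are restrictions from $M^{\#}$'' does not address this. What is needed, and what the paper does, is to work on $\GL_2$ homogeneous models $\cH_{\mu,\nu}$ under the sole assumption $\omega_1\omega_2\omega_3=\mathbf{1}$. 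This uses a $\GL_2$ version of the Venkatesh--Wang rank-one Fourier lemma and their kernel identity for the general characters $\alpha_1=\nu_1\mu_2\mu_3|\cdot|^{-1/2}$ (and similarly $\alpha_2,\alpha_3$).

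The bookkeeping you defer is then where the result actually comes from. Here $\alpha_i=\lambda\xi_i|\cdot|^{-1/2}$, so $K_{w_{123}\bm\chi^{\#}}=K_{\bm\chi^{\#}}$, and the homogeneous models and quotient maps for $\bm\chi^{\#}$ and $w_{123}\bm\chi^{\#}$ literally coincide. $\cF_{\psi^{-1}}$ lifts exactly the Whittaker-normalized operators $\gamma(1,\xi_i,\psi^{-1})^{-1}M_{\nu_i,\mu_i}$, so no stray signs $\xi_i(-1)$ appear. Finally, $c(\bm\chi^{\#})=\prod_{\eta\in Y}\gamma(\tfrac12,\lambda\eta,\psi)^{-1}=\prod_{\eta\in Y}\varepsilon(\tfrac12,\lambda\eta,\psi)^{-1}=s$.
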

Here we recall that for a general character $\chi:F^\times \longrightarrow \CC^\times$, we have the functional equation
\[ \varepsilon(s,\chi,\psi)\varepsilon(1-s,\chi^{-1},\psi) = \chi(-1), \]
so if $\chi^2= \mathbf{1}$, then $\varepsilon(1/2,\chi,\psi)^2 = \chi(-1)$. In particular, we do have
\[
        \left(\prod_{\eta\in Y}\varepsilon(\frac{1}{2}, \lambda\eta,\psi)\right)^2=\prod_{\eta\in Y}(\lambda\eta)(-1)=\lambda(-1)^4(\xi_1\xi_2\xi_3)(-1)=1
        \]
        and the $s$ in Part (\ref{item:order-two-distinction-type-ii}) really is a sign $\pm 1$. The proofs are given in the next two sections. 

\subsection{Supplements}\label{subsec:supplements}

In this subsection, we include several additional remarks. The first is the compatibility with D. Prasad's theorem in the unramified case.

\begin{lemma}\label{lemma:unramified-distinguished-constituent}
Suppose all $\mu_i,\nu_i$ are unitary unramified, then the unique ($K_M$-)unramified member of the L-packet $\{\bm{\sigma}_{\kappa}\}$ is $\bm\sigma_{\mathbf{1}}$.
\end{lemma}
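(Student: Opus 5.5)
The plan is to show directly that the spherical vector of $I(\bm{\chi})$ lies in $\bm{\sigma}_{\mathbf{1}}$, and that it is the only $K_M$-fixed line in the whole packet.

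\emph{Step 1: the spherical line.} Since $M=B_MK_M$ (Iwasawa decomposition, inherited from $M^{\#}=B_M^{\#}\GL_2^3(\cO_F)$ by intersecting with $M$), we have $\dim I(\bm{\chi})^{K_M}\le 1$. Let $f^\circ=f_1^\circ\otimes f_2^\circ\otimes f_3^\circ\in I(\bm{\chi}^{\#})$ be the tensor product of the normalized spherical vectors, $f_i^\circ(1)=1$. Its restriction $\operatorname{res}_{\bm{\chi}^{\#}}(f^\circ)$ is a nonzero $K_M$-fixed vector, so $I(\bm{\chi})^{K_M}$ is exactly one-dimensional. Because $\bm{\pi}=\bigoplus_\kappa\bm{\sigma}_\kappa$, exactly one constituent contains a $K_M$-fixed vector. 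It therefore suffices to show $\operatorname{res}(f^\circ)\in\bm{\sigma}_{\mathbf{1}}$, i.e.\ that $R(r,\bm{\chi})$ fixes $\operatorname{res}(f^\circ)$ for every $r\in R_{\bm{\chi}}$. Indeed, then $e_\kappa\operatorname{res}(f^\circ)=0$ for $\kappa\ne\mathbf{1}$, and $\operatorname{res}(f^\circ)=e_{\mathbf{1}}\operatorname{res}(f^\circ)$.

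\emph{Step 2: reduction to rank one.} Take $w_S\in W_{\bm{\chi}}$ representing $r$. By Lemma~\ref{lemma:normalized-intertwining-restriction} and the factorization \eqref{eq:normalized-intertwining-tensor-factorization}, we have $R(r,\bm{\chi})\operatorname{res}_{\bm{\chi}^{\#}}(f^\circ)=\operatorname{res}_{w_S\bm{\chi}^{\#}}\bigl(\bigotimes_i R_if_i^\circ\bigr)$. We are free to choose $\psi$ of conductor $\cO_F$, since by the remark after \eqref{eq:whittaker-normalization} the operator $R(r,\bm{\chi})$ does not depend on $\psi$. The goal is then to show $R_{\mu_i,\nu_i}f^\circ_{\mu_i,\nu_i}=f^\circ_{\nu_i,\mu_i}$ for each $i\in S$. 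Granting this, $\bigotimes_iR_if_i^\circ$ is the normalized spherical vector of $I(w_S\bm{\chi}^{\#})$. Its restriction to $M$ is the spherical vector of $I(w_S\bm{\chi})=I(\bm{\chi})$ taking value $1$ at $1$, which is $\operatorname{res}(f^\circ)$ again.

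\emph{Step 3: the rank-one identity (main computation).} By the Gindikin--Karpelevich formula,
\[
J_{\mu,\nu}f^\circ_{\mu,\nu}=\frac{L(0,\mu\nu^{-1})}{L(1,\mu\nu^{-1})}\,f^\circ_{\nu,\mu}.
\]
By the Casselman--Shalika formula with $\psi$ unramified, $\ell^\psi_{\mu,\nu}(f^\circ_{\mu,\nu})=L(1,\mu\nu^{-1})^{-1}$ up to the same measure constant for both $(\mu,\nu)$ and $(\nu,\mu)$. Applying $\ell^\psi_{\nu,\mu}$ to the first identity gives
\[
C^\psi(\mu,\nu)=\frac{L(0,\mu\nu^{-1})}{L(1,\mu\nu^{-1})}\cdot\frac{L(1,\nu\mu^{-1})^{-1}}{L(1,\mu\nu^{-1})^{-1}},
\]
and one checks that $R_{\mu,\nu}f^\circ=f^\circ$, as desired.

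The obstacle I expect is regularity. At our unitary points with $i\in S$, we have $\xi_i^2=\mathbf{1}$ with $\xi_i$ unramified. If $\xi_i$ is nontrivial, then $\xi_i(\varpi_F)=-1$, so $L(0,\xi_i)$ and $L(1,\xi_i)$ are finite and nonzero and everything is an honest identity. If $\xi_i=\mathbf{1}$, then $J$ has a pole. In that case I would either work in the meromorphic family and take the limit, or simply note that $w_i\in W^0_{\bm{\chi}}$ and invoke Lemma~\ref{lemma:weyl-kernel-intertwining}, which says $R(w_i,\bm{\chi})=\operatorname{id}$ there.
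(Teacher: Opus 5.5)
Your argument is correct, but it takes a different route from the paper. The paper never evaluates an intertwining operator. It computes only the Jacquet integral on the spherical vector, $\ell^{\psi}_{\bm\chi}(f^\circ)=\prod_{i=1}^3(1-q_F^{-1}\xi_i(\varpi_F))$, notes that this is nonzero because the $\xi_i$ are unitary, and concludes using the fact from \S2 that $\ell^{\psi}_{\bm\chi}$ vanishes on $\bm\sigma_\kappa$ for $\kappa\neq\mathbf{1}$.

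You instead show directly that every $R(r,\bm\chi)$ fixes the spherical line. To do this you pass to $M^{\#}$ and use Gindikin--Karpelevich and Casselman--Shalika in each factor. This gives more explicit information, namely $R_{\mu,\nu}f^\circ_{\mu,\nu}=\frac{L(1,\xi^{-1})}{L(1,\xi)}f^\circ_{\nu,\mu}$ with $\xi=\mu\nu^{-1}$. The price is the regularity discussion and the separate treatment of $\xi_i=\mathbf{1}$.

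\textbf{A hypothesis hidden in ``one checks''.} The scalar you obtain is $L(1,\xi_i^{-1})/L(1,\xi_i)$. It equals $1$ only because $\xi_i^2=\mathbf{1}$ forces $\xi_i(\varpi_F)=\pm1$ for $i\in S$. For a general unitary unramified $\xi$ it is not $1$, so you should state this step explicitly.

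\textbf{The Gindikin--Karpelevich factor cancels.} Since $R_{\mu,\nu}$ maps the spherical line to the spherical line, the Whittaker normalization alone gives the scalar as $\ell^\psi_{\mu,\nu}(f^\circ_{\mu,\nu})/\ell^\psi_{\nu,\mu}(f^\circ_{\nu,\mu})$.

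\textbf{The passage to $M^{\#}$ can be dropped.} $R(r,\bm\chi)$ preserves the one-dimensional space $I(\bm\chi)^{K_M}$, so it acts there by a scalar $c_r$. Then $\ell^\psi_{\bm\chi}\circ R(r,\bm\chi)=\ell^\psi_{\bm\chi}$ together with $\ell^\psi_{\bm\chi}(f^\circ)\neq0$ forces $c_r=1$. This is exactly the paper's argument, and its only real input is the nonvanishing of the Casselman--Shalika value.
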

\begin{proof}
Let $f^{\circ}\in I(\bm{\chi})^{K_M}$ be the spherical vector normalized by $f^\circ(1)=1$, and assume that $\psi$ is unramified. Then the Jacquet integral gives
\[
\ell_{\bm\chi}^{\psi}(f^\circ)
=\prod_{i=1}^3\bigl(1-q_F^{-1}\xi_i(\varpi_F)\bigr)\ne0
\]
since $\xi_i$ is unitary, so that the Whittaker functional $\ell_{\bm{\chi}}^{\psi}$ does not vanish on the unique unramified constituent of $I(\bm{\chi})$. 
\end{proof}

D. Prasad's theorem for unramified principal series \cite[Theorem 1.3]{prasad} states that a trilinear form on an unramified principal series of $\GL_2^3(F)$ does not vanish on the line of spherical vectors. Since there is only one unitary unramified character $\xi_0$ of order 2, a unitary unramified principal series packet $\{\bm{\sigma}_{\kappa}\}$ with more than one member belongs to the case of order 4 or order 2, type I. Thus the above lemma shows that in this case the $G^{\Delta}$-distinguished constituent is exactly the unramified one, which is compatible with Prasad's result.

Our second remark is the self-duality of our unitary principal series L-packet.

\begin{proposition}\label{proposition:unitary-principal-series-self-duality}
Let $\bm\chi:T_M\longrightarrow\CC^1$ be a unitary character trivial on $Z^\Delta$. Then each constituent $\bm\sigma_\kappa$ of $I(\bm\chi)$ is self-dual.
\end{proposition}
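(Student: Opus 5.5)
We want to show that each $\bm\sigma_\kappa$ is self-dual. The plan is to reduce everything to the $R$-group labelling: first show that $I(\bm\chi)^\vee\simeq I(\bm\chi)$, then identify the induced permutation of the labels and check that it is trivial.

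\textbf{Step 1: the packet is stable under duality.} Since $\bm\chi$ is unitary, we have $I(\bm\chi)^\vee\simeq I(\bm\chi^{-1})=I(\overline{\bm\chi})$. Choose the lift $\bm\chi^{\#}$ with each $\mu_i,\nu_i$ unitary. Then
\[
\bm\chi^{-1}=w_{123}\bm\chi\cdot\bigl(\prod_i\mu_i\nu_i\bigr)^{-1}(c).
\]
To see this, write $\bm\chi(t)=(\nu_1\nu_2\nu_3)(c)\prod_i\xi_i(a_i)$ as in Proposition \ref{proposition:r-group-self-twists}. Then $\bm\chi^{-1}(t)=(\nu_1\nu_2\nu_3)^{-1}(c)\prod_i\xi_i^{-1}(a_i)$, while $w_{123}\bm\chi(t)=(\mu_1\mu_2\mu_3)(c)\prod_i\xi_i^{-1}(a_i)$. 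The ratio is $(\prod_i\mu_i\nu_i)(c)$, and this is trivial because $\bm\chi$ is trivial on $Z^\Delta$ (take $a_i=z$, $c=z^2$; this gives $\prod_i\mu_i\nu_i(z)=1$). Hence $\bm\chi^{-1}=w_{123}\bm\chi$, and $I(\bm\chi^{-1})\simeq I(\bm\chi)$ via $R(w_{123},\bm\chi)$. Therefore $\kappa\mapsto\kappa'$ defined by $\bm\sigma_\kappa^\vee\simeq\bm\sigma_{\kappa'}$ is a permutation of $\Irr(R_{\bm\chi})$, and it remains to show that it is the identity.

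\textbf{Step 2: the generic member is self-dual.} The contragredient of a $\mathfrak w_M$-generic representation is generic with respect to the datum $(B_M,\psi_{U_M}^{-1})$. This datum is conjugate to $\mathfrak w_M$ by $(\diag(-1,1),\diag(-1,1),\diag(-1,1))\in T_M$, in line with the earlier Remark that $\mathfrak w_M$ is independent of $\psi$. Hence $\bm\sigma_{\mathbf 1}^\vee$ is $\mathfrak w_M$-generic and is a member of the same packet, so $\bm\sigma_{\mathbf 1}^\vee\simeq\bm\sigma_{\mathbf 1}$. This settles $|R_{\bm\chi}|\le2$, since the permutation fixes $\mathbf 1$ and therefore fixes the other element as well.

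\textbf{Step 3: the order $4$ case.} By Corollary \ref{corollary:order-4-canonical-form}, $\bm\pi^{\#}=(\rho\times\rho^{-1})^{\boxtimes3}$, which is $S_3$-symmetric. Taking contragredients commutes with the permutation automorphisms $P_\tau$. By Lemma \ref{lemma:order-four-permutation-operators}, the duality permutation on $\Irr(R_{\bm\chi})$ then commutes with the transitive $S_3$-action on the three nontrivial characters. One checks that a permutation of $\Irr(R_{\bm\chi})$ fixing $\mathbf 1$ and commuting with the full $S_3$ must be the identity. Indeed, the stabilizer in $S_3$ of each nontrivial $\kappa$ is a transposition that fixes only $\kappa$ among the nontrivial characters, so the permutation must send $\kappa$ to itself.

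\textbf{Main obstacle.} The delicate point is Step 3, specifically making rigorous that duality is $S_3$-equivariant on labels. To handle it, I would realize the contragredient as $\bm\sigma^\vee\simeq\bm\sigma\circ\theta$, where $\theta$ is transpose-inverse composed with $\dot w$-conjugation; this is Gelfand–Kazhdan on each $\GL_2$ factor, restricted to $M$. Such a $\theta$ is $S_3$-equivariant. This description also gives an alternative direct proof: $\theta$ preserves $\mathfrak w_M$ up to $T_M$-conjugacy, and it intertwines $R(r,\bm\chi)$ with $R(r,\bm\chi)$ factor by factor, so it fixes every label.
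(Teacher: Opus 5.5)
Your proof is correct; it agrees with the paper on the first half and departs from it in how you pass from the generic member to the rest of the packet.

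\textbf{Shared part.} Your Steps 1 and 2 match the paper in substance. The paper gets stability of the packet under duality from $\pi_i^\vee\simeq\pi_i\otimes\omega_i^{-1}$ rather than from $\bm\chi^{-1}=w_{123}\bm\chi$, but the content is the same. One small point: the paper explicitly invokes unitarity, via $\bm\sigma_{\mathbf 1}^\vee\simeq\overline{\bm\sigma_{\mathbf 1}}$, to see that $\bm\sigma_{\mathbf 1}^\vee$ is $(B_M,\psi_{U_M}^{-1})$-generic. You should cite that too, since $\bm\sigma_{\mathbf 1}$ is unitary here.

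\textbf{Where you differ.} The paper handles all members uniformly. By Tadi\'c, every member is $\bm\sigma_{\mathbf 1}^g$ for some $g\in M^{\#}$, and
\[
(\bm\sigma_{\mathbf 1}^g)^\vee\simeq(\bm\sigma_{\mathbf 1}^\vee)^g\simeq\bm\sigma_{\mathbf 1}^g .
\]
This needs no case split on $|R_{\bm\chi}|$, no canonical form, and no $S_3$-symmetry.

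\textbf{Your order-4 argument.} It is also correct, and the equivariance you flag as the main obstacle is purely formal. For any automorphism one has $({}^\tau\bm\sigma)^\vee\simeq{}^\tau(\bm\sigma^\vee)$, and ${}^\tau\bm\sigma_\kappa\simeq\bm\sigma_{\tau(\kappa)}$ is exactly Proposition \ref{proposition:order-four-packet-permutation-action}. So the duality permutation commutes with $S_3$, and your stabilizer argument finishes. You do not even need Step 2 in this case, since $\mathbf 1$ is the only $S_3$-fixed label.

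\textbf{The $\theta$ detour.} The Gelfand--Kazhdan argument is unnecessary. As sketched it is also incomplete: restricting $\bm\pi^{\#\vee}\simeq\bm\pi^{\#}\circ\theta$ to $M$ only matches the two direct sums. It does not tell you how $\theta$ acts on the individual labels without a further check that it is compatible with the normalized operators $R(r,\bm\chi)$.

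\textbf{Comparison.} Your route works only because order-4 packets carry the extra $S_3$-symmetry. The paper's conjugation argument is shorter, and it applies to any packet obtained by restriction once the generic member is known to be self-dual.
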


\begin{proof}
Choose a unitary lift $\bm\chi^\#$ and write $\pi_i=\mu_i\times\nu_i$, $\omega_i=\mu_i\nu_i$. The condition $\bm{\chi} |_{Z^{\Delta}} = \mathbf{1}$ is paraphrased as $\prod_i\omega_i=\mathbf{1}$. Since $\pi_i^\vee\simeq\pi_i\otimes\omega_i^{-1}$, we find 
 $$I(\bm{\chi}^{\#})^\vee|_M \simeq (\pi_1\boxtimes \pi_2\boxtimes \pi_3)\otimes (\omega_1\boxtimes \omega_2 \boxtimes \omega_3)^{-1}|_M \simeq I(\bm{\chi}^{\#})|_M,$$
and the L-packet $\{\bm{\sigma}_{\kappa}\}$ consisting of constituents of $I(\bm \chi)$ does not change after taking the contragredient. The unitarity implies that $\bm\sigma_{\mathbf{1}}^\vee$ is $(B_M,\psi_{U_M}^{-1})$-generic, while this is just $\mathfrak{w}_M$-generic (see the Remark after \eqref{eq:standard-whittaker-datum}), thus $\bm\sigma_{\mathbf{1}}^\vee\simeq\bm\sigma_{\mathbf{1}}$. By the work of Tadi\'c (see \S~\ref{subsec:tadic}) we know that each $\bm{\sigma}_{\kappa}$ can be written as $ \bm{\sigma}_{\kappa}\simeq \bm{\sigma}_{\mathbf{1}}^g$ for some $g\in M^{\#}$ (recall $\bm{\sigma}_{\mathbf{1}}^g(h):=\bm{\sigma}_{\mathbf{1}}(ghg^{-1})$). Since $(\bm{\sigma}_{\mathbf{1}}^g)^{\vee}\simeq (\bm{\sigma}_{\mathbf{1}}^{\vee})^g$, the proof is completed.
\end{proof}

Finally, let us record the $S_3$-action on an order $4$ packet. We let $S_3$ act on $M$ by the standard permutation 
\[
\alpha_\tau(g_1,g_2,g_3)
:=(g_{\tau^{-1}(1)},g_{\tau^{-1}(2)},g_{\tau^{-1}(3)}),
\qquad \tau \in S_3.
\]
Note that this $S_3$-action preserves the standard Whittaker datum $\mathfrak w_M$. Then this action on $M$ induces an $S_3$-action on $\Irr(M)$ by 
$$ {}^\tau \bm{\sigma}(g):=\bm{\sigma}(\alpha_{\tau^{-1}}(g)),\qquad \bm{\sigma}\in \Irr(M),~\tau\in S_3,~g\in M.$$

Suppose $\bm{\chi}:T_M\longrightarrow \CC^1$ is a unitary character with $|R_{\bm\chi}|=4$, and we have identified $\Irr(R_{\bm\chi})$ with
\[
\{(\varepsilon_1,\varepsilon_2,\varepsilon_3)\in\{\pm1\}^3\mid 
\varepsilon_1\varepsilon_2\varepsilon_3=1\}
\]
in a canonical way. We also let $S_3$ act on $\Irr(R_{\bm\chi})$ via 
\[ \tau\cdot (\varepsilon_1,\varepsilon_2,\varepsilon_3) = (\varepsilon_{\tau^{-1}(1)},\varepsilon_{\tau^{-1}(2)},\varepsilon_{\tau^{-1}(3)}),\]
then we have the following. 

\begin{proposition}\label{proposition:order-four-packet-permutation-action}
For $\tau\in S_3$ and $\kappa\in\Irr(R_{\bm\chi})$, we have ${}^\tau\bm\sigma_\kappa\simeq\bm\sigma_{\tau(\kappa)}$. In particular, $S_3$ fixes the generic member and permutes the other three representations transitively.
\end{proposition}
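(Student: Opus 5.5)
We reduce to the canonical lift $\bm{\chi}^{\#}=(\rho,\rho^{-1};\rho,\rho^{-1};\rho,\rho^{-1})$ of Corollary \ref{corollary:order-4-canonical-form}. This changes neither $I(\bm{\chi})$ nor, by definition, the labels $\bm\sigma_\kappa=e_\kappa I(\bm\chi)$. Let $\tilde\alpha_\tau\colon M^{\#}\to M^{\#}$ be the permutation automorphism, so that $\alpha_\tau$ is its restriction to $M$. The permutation map $P_\tau$ of \S~\ref{subsec:order-four} then satisfies
\[
P_\tau\circ \bm\pi^{\#}(g)=\bm\pi^{\#}(\tilde\alpha_\tau(g))\circ P_\tau,\qquad g\in M^{\#}.
\]
Indeed, with $g=(g_1,g_2,g_3)$ we have $P_\tau(g_1v_1\otimes g_2v_2\otimes g_3v_3)=g_{\tau^{-1}(1)}v_{\tau^{-1}(1)}\otimes\cdots$, and this equals $\bm\pi^{\#}(\tilde\alpha_\tau g)P_\tau(v_1\otimes v_2\otimes v_3)$. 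On the function model $I(\bm\chi^{\#})$, the map $P_\tau$ is just $f\mapsto f\circ\tilde\alpha_\tau^{-1}$, which is well defined because $\bm\chi^{\#}$ is permutation invariant and $\tilde\alpha_\tau$ preserves $B_M^{\#}$. Restricting to $M$ and using $\operatorname{res}_{\bm\chi^{\#}}$, we obtain a linear bijection $P_\tau$ of $I(\bm\chi)$ with $P_\tau\circ\bm\pi(g)=\bm\pi(\alpha_\tau(g))\circ P_\tau$ for $g\in M$.

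For $v\in\bm\sigma_\kappa$ this gives $\bm\pi(g)P_\tau v=P_\tau\,\bm\pi(\alpha_{\tau^{-1}}(g))v$. Hence $P_\tau$ restricts to an $M$-isomorphism ${}^\tau\bm\sigma_\kappa\xrightarrow{\sim}P_\tau(\bm\sigma_\kappa)$, where ${}^\tau\bm\sigma(g)=\bm\sigma(\alpha_{\tau^{-1}}(g))$ as in the statement. By Lemma \ref{lemma:order-four-permutation-operators}, $P_\tau(\bm\sigma_\kappa)=\bm\sigma_{\tau(\kappa)}$, where $\tau(\kappa)$ is the dual action of $S_3$ on $\Irr(R_{\bm\chi})$. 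We conclude that ${}^\tau\bm\sigma_\kappa\simeq\bm\sigma_{\tau(\kappa)}$ for this dual action.

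It remains to match the dual action with the action on sign vectors in the statement. The canonical identification sends $\kappa$ to $(\varepsilon_1,\varepsilon_2,\varepsilon_3)$ with $\kappa$ evaluated on the nontrivial elements of $X_M(\bm\pi^{\#})=\{\mathbf 1,(\mathbf 1,\xi,\xi),(\xi,\mathbf 1,\xi),(\xi,\xi,\mathbf 1)\}$, indexed by the coordinate where the entry is $\mathbf 1$. Concretely, $\kappa\leftrightarrow\varepsilon$ with $\varepsilon_j\varepsilon_k=\kappa(r_i)$ for $\{i,j,k\}=\{1,2,3\}$, where $r_i$ is the element with trivial $i$-th entry. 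The permutation action on $X_M$ moves $r_i$ to $r_{\tau(i)}$. The dual action $\tau(\kappa)(r)=\kappa(\tau^{-1}\cdot r)$ therefore corresponds to reindexing by $\tau^{-1}$, which is exactly $\tau\cdot\varepsilon$. This bookkeeping of conventions is the only delicate point: one must check that the left permutation actions on $M$, on $X_M$ and on signs all use the same $\tau^{-1}$ convention, so that no inverse slips in. Since the orbit statement is invariant under replacing $\tau$ by $\tau^{-1}$, even an error there would not affect the final assertion.

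Finally, $(1,1,1)$ is the trivial character, which labels the generic member $\bm\sigma_{\mathbf 1}$, and it is fixed by $S_3$. The other three sign vectors, $(1,-1,-1)$ and its permutations, form a single $S_3$-orbit. This gives the last assertion, and it is consistent with the remark that the $S_3$-action preserves $\mathfrak w_M$.
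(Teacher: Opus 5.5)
Your proposal is correct and follows the paper's own proof. Like the paper, you pass to the canonical lift $(\rho\times\rho^{-1})^{\boxtimes 3}$, use $P_\tau\circ\bm\pi^{\#}(g)=\bm\pi^{\#}(\alpha_\tau(g))\circ P_\tau$, and invoke Lemma~\ref{lemma:order-four-permutation-operators} to get ${}^\tau\bm\sigma_\kappa\simeq P_\tau(\bm\sigma_\kappa)=\bm\sigma_{\tau(\kappa)}$. Your extra check that the dual action on $\Irr(R_{\bm\chi})$ agrees with the permutation of sign vectors (via $\tau\cdot r_i=r_{\tau(i)}$ and $\varepsilon_i=\kappa(r_i)$) is bookkeeping the paper leaves implicit, and it is correct.
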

\begin{proof}

Recall that in the order 4 case we can pick $\bm{\pi}^{\#} = (\rho\times \rho^{-1})\boxtimes (\rho\times \rho^{-1})\boxtimes (\rho\times \rho^{-1})$ such that $\{\bm{\sigma}_{\kappa}\}$ is the set of constituents of $\bm{\pi}^{\#}|_M$. The permutation operator
\[
P_\tau: \bm{\pi}^{\#} = (\rho\times \rho^{-1})^{\boxtimes 3}\longrightarrow (\rho\times \rho^{-1})^{\boxtimes 3}= \bm{\pi}^{\#},\qquad v_1\otimes v_2\otimes v_3\longmapsto v_{\tau^{-1}(1)}\otimes v_{\tau^{-1}(2)}\otimes v_{\tau^{-1}(3)}
\]
defined in \S~\ref{subsec:order-four} clearly satisfies
\[
P_\tau\circ \bm\pi^{\#}(g)\circ P_\tau^{-1}
=\bm\pi^{\#}(\alpha_\tau(g)),\qquad g\in M.
\]
Then Lemma \ref{lemma:order-four-permutation-operators} shows that $P_\tau$ restricts to an $M$-isomorphism from ${}^\tau\bm\sigma_\kappa$ to $\bm\sigma_{\tau(\kappa)}$. 
\end{proof}

Of course, this is just a specific case of the general compatibility of local Langlands correspondence with outer automorphisms. It is not very difficult to formulate and prove a general statement for the $S_3$-action on the refined tempered L-parameters of $M$, but we shall not do it here since it is not needed in this paper. Let us just mention: we have the outer automorphism group isomorphisms 
$$\mathrm{Out}(\bM)\simeq \ZZ/2\ZZ\times S_3,\qquad \mathrm{Out}(\bM/\bZ^{\Delta})\simeq S_3.$$ 
but $\mathrm{Out}(\bM^{\#})$ is infinite (with a natural subgroup $(\ZZ/2\ZZ)^3\rtimes S_3$).
% ===== End 03_results.tex =====

\section{Proof of the order 2, type I case}\label{sec:type-i}
% ===== Begin 04_type 1.tex =====
In this section we complete a proof of the order 2, type I case. By Corollary \ref{corollary:order-2-canonical-form}, up to permutation we may assume
\[
\bm{\chi}^{\#}:=(\xi,\mathbf{1};\xi,\mathbf{1};\alpha,\alpha^{-1}),
\qquad
\bm{\chi}:=\left.\bm{\chi}^{\#}\right|_{T_M},
\]
where the character $\alpha:F^\times\longrightarrow\CC^1$ satisfies $\alpha^2\neq \xi$. We have $R_{\bm{\chi}}=\{1,r_{12}\}$, where $r_{12}$ is the image of $w_{12}$. To simplify notations, in this section we write
\begin{equation}\label{eq:type-i-rank-one-models}
(\pi_\eta,V_\eta):=(I_{\eta,\mathbf{1}},V_{\eta,\mathbf{1}}),
\qquad
(\tau_\eta,W_{\eta}):=(I_{\eta,\eta^{-1}},V_{\eta,\eta^{-1}})
\end{equation}
for a unitary character $\eta:F^\times \longrightarrow\CC^1$, and denote the underlying space $I(\bm{\chi}^{\#})$ by 
\begin{equation}\label{eq:type-i-tensor-model}X_{\xi,\alpha}:=V_\xi\otimes V_\xi\otimes W_{\alpha}.\end{equation}
Recall that the restriction of functions gives a canonical $M$-isomorphism
\[
\operatorname{res}_{\bm{\chi}^{\#}}:X_{\xi,\alpha}=I(\bm{\chi}^{\#})\xrightarrow{\sim}I(\bm{\chi}).
\] Fix a nonzero
$L\in\Hom_{G^\Delta}(I(\bm\chi),\mathbf{1})$. Our goal is to prove the identity
\begin{equation}\label{eq:type-i-period-eigenvalue}
L\circ R(r_{12},\bm{\chi})=\xi(-1)L,
\end{equation}
which then gives the required result.

\subsection{Model transition}\label{subsec:type-i-models}

We want to ``lift'' the computation of $L\circ R(r_{12},\bm{\chi})$ for $L\in\Hom_{G^\Delta}(I(\bm{\chi}),\mathbf{1})$ to the level of $I(\bm{\chi}^{\#})$ and $L^{\#}\in \Hom_{G^{\Delta}}(I(\bm{\chi}^{\#}),\mathbf{1})$, then the intertwining operator $R(w_{12},\bm{\chi}^{\#})$ can be easily specified on each $\GL_2$-factor. However, a natural problem here is: the target of $R(w_{12},\bm{\chi}^{\#})$ is $I(w_{12}\bm{\chi}^{\#})$; we have to use another restriction map
\[\mathrm{res}_{w_{12}\bm{\chi}^{\#}}:I(w_{12}\bm{\chi}^{\#})\longrightarrow I(\bm{\chi})\]
and it does not make sense to write $L^{\#}\circ R(w_{12},\bm{\chi}^{\#})$ as a functional on $I(\bm{\chi})$. 
We can easily resolve this technical issue by a model transition, in terms of determinant twist.

For characters $\mu,\nu,\eta:F^\times\longrightarrow\CC^\times$, we define the determinant twist map
\begin{equation}\label{eq:determinant-twist}
D_\eta:V_{\mu,\nu}\longrightarrow V_{\mu\eta,\nu\eta},
\qquad (D_\eta f)(g):=\eta(\det g)f(g).
\end{equation}
Here for notational convenience we deliberately suppress the source and target from the notation and the relevant spaces will be clear from the context. We then have
\begin{equation}\label{eq:determinant-twist-covariance}
I_{\mu\eta,\nu\eta}(g)\circ D_\eta
=\eta(\det g)D_\eta\circ I_{\mu,\nu}(g),
\qquad g\in G,
\end{equation}
i.e. $D_\eta$ gives an isomorphism between the twist $(I_{\mu,\nu}\otimes \eta, V_{\mu,\nu})$ and $(I_{\eta\mu,\eta\nu}, V_{\eta\mu,\eta\nu})$. We also have $D_{\eta^{-1}}\circ D_{\eta} = \mathrm{id}$, and in particular $D_\xi^2=\operatorname{id}$ since $\xi^2=\mathbf{1}$.

Let
\[
R_{\xi}:=R_{\xi,\mathbf{1}}:V_\xi = V_{\xi,\mathbf{1}}\longrightarrow V_{\mathbf{1},\xi}
\]
be the normalized intertwining operator defined in \eqref{eq:rank-one-normalized-intertwining}. We then set
\begin{equation}\label{eq:definition-active-relation-involution}
A_\xi^{\psi}:=D_{\xi}\circ R_{\xi}:V_\xi\longrightarrow V_\xi.
\end{equation}
The relation \eqref{eq:determinant-twist-covariance} now gives
\begin{equation}\label{eq:active-relation-covariance}
A_\xi^{\psi} \circ \pi_\xi(g)
=\xi(\det g)\pi_\xi(g)\circ A_\xi^{\psi},
\qquad g\in G.
\end{equation}

\begin{lemma}\label{lemma:active-relation-involution}
We have
\begin{equation}\label{eq:active-relation-involution}
\ell_{\xi,\mathbf{1}}^{\psi}\circ D_{\xi}
=\ell_{\mathbf{1},\xi}^{\psi},\qquad
\ell_{\xi,\mathbf{1}}^{\psi}\circ A_\xi^{\psi}
=\ell_{\xi,\mathbf{1}}^{\psi},\qquad
(A_\xi^{\psi})^2=\operatorname{id}_{V_\xi}.
\end{equation}
\end{lemma}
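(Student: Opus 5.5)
We prove the three identities in order; each is a short manipulation with the explicit integrals, plus one uniqueness argument.

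\emph{First identity.} We compute $\ell^\psi_{\xi,\mathbf{1}}(D_\xi f)$ for $f\in V_{\mathbf{1},\xi}$ directly from the Jacquet integral, first in the convergence range and then by holomorphic continuation. We have
\[
\ell^\psi_{\xi,\mathbf{1}}(D_\xi f)=\int_F \xi(\det(\dot w_0^{-1}u(x)))\,f(\dot w_0^{-1}u(x))\psi(-x)\,dx .
\]
Since $\det(\dot w_0^{-1}u(x))=1$, the factor $\xi(\det)$ disappears and the integral is exactly $\ell^\psi_{\mathbf{1},\xi}(f)$. Convergence needs care because $\xi$ is unitary. We handle this by twisting into the chamber: replace $(\mu,\nu)$ by $(\mu|\cdot|^{t},\nu|\cdot|^{-t})$ and note that $D_\xi$ commutes with these deformations in the compact picture. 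The identity then holds for $\operatorname{Re}t\gg0$ and extends by holomorphic continuation of the Jacquet integral. Alternatively, one can simply note that the integrands agree pointwise on the relevant cell, so the regularized integrals agree as well.

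\emph{Second identity.} Combine the first identity with the rank-one Whittaker normalization \eqref{eq:rank-one-whittaker-normalization} for $(\mu,\nu)=(\xi,\mathbf{1})$:
\[
\ell^\psi_{\xi,\mathbf{1}}\circ A^\psi_\xi=\ell^\psi_{\xi,\mathbf{1}}\circ D_\xi\circ R_\xi=\ell^\psi_{\mathbf{1},\xi}\circ R_{\xi,\mathbf{1}}=\ell^\psi_{\xi,\mathbf{1}}.
\]

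\emph{Third identity.} By \eqref{eq:active-relation-covariance}, applying it twice and using $\xi^2=\mathbf{1}$, we get $(A^\psi_\xi)^2\circ\pi_\xi(g)=\pi_\xi(g)\circ(A^\psi_\xi)^2$. So $(A^\psi_\xi)^2\in\End_G(\pi_\xi)$. Since $\xi\neq|\cdot|^{\pm1}$ (it is unitary of order $2$), $\pi_\xi$ is irreducible, and Schur's lemma makes $(A^\psi_\xi)^2$ a scalar $c$. Applying $\ell^\psi_{\xi,\mathbf{1}}$ and using the second identity twice gives $\ell^\psi_{\xi,\mathbf{1}}=c\,\ell^\psi_{\xi,\mathbf{1}}$. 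The Jacquet integral is nonzero on the irreducible generic $\pi_\xi$, so $c=1$.

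\emph{Main obstacle.} The only delicate step is the first identity. Justifying the continuation carefully requires checking that $D_\xi$ intertwines the holomorphic families $V_{\xi|\cdot|^t,|\cdot|^{-t}}$ and $V_{|\cdot|^t,\xi|\cdot|^{-t}}$ compatibly with the compact realization. This holds because $D_\xi$ is multiplication by a function that is locally constant on $G$ and independent of $t$.
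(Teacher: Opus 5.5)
Your proof is correct and follows the paper's argument essentially verbatim. The first identity comes from $\det(\dot w_0^{-1}u(x))=1$ in the Jacquet integral, understood via holomorphic continuation. The second combines this with the Whittaker normalization of $R_{\xi,\mathbf{1}}$. The third uses the covariance relation, Schur's lemma, and the normalization. Your added remarks make explicit what the paper leaves implicit: that $D_\xi$ is multiplication by a $t$-independent function in the compact picture, and that $\ell^\psi_{\xi,\mathbf{1}}\neq 0$ is needed to conclude $c=1$.
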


\begin{proof}
For $f\in V_{\mathbf{1},\xi}$, the definition of the Jacquet integral gives
\[
\begin{aligned}
\ell_{\xi,\mathbf{1}}^{\psi}\circ D_\xi (f)
&=\int_F (D_\xi f)(\dot{w}_0^{-1}u(x))\psi(-x)\,dx\\
&=\int_F \xi\bigl(\det(\dot{w}_0^{-1}u(x))\bigr)
f(\dot{w}_0^{-1}u(x))\psi(-x)\,dx\\
&=\int_F f(\dot{w}_0^{-1}u(x))\psi(-x)\,dx =\ell_{\mathbf{1},\xi}^{\psi}(f),
\end{aligned}
\]
in the sense of holomorphic continuation, by $\det(\dot{w}_0^{-1}u(x))=1$. The Whittaker normalization of $R_\xi=R_{\xi,\mathbf{1}}$ reads $\ell_{\mathbf{1},\xi}^{\psi}\circ R_\xi
=\ell_{\xi,\mathbf{1}}^{\psi}$, thus
\[
\ell_{\xi,\mathbf{1}}^{\psi}\circ A_\xi^\psi
=\ell_{\xi,\mathbf{1}}^{\psi}\circ D_\xi\circ R_\xi
=\ell_{\mathbf{1},\xi}^{\psi}\circ R_\xi
=\ell_{\xi,\mathbf{1}}^{\psi}.
\]
By \eqref{eq:active-relation-covariance} and $\xi^2=\mathbf{1}$ we find that $(A_\xi^{\psi})^2\in \End_G(\pi_\xi)$, hence $(A_\xi^{\psi})^2$ is a scalar by irreducibility of $\pi_\xi$. Finally, by $\ell_{\xi,\mathbf{1}}^{\psi}\circ(A_\xi^\psi)^2
=\ell_{\xi,\mathbf{1}}^{\psi}$ it is the identity.
\end{proof}

Now we write
\[
A_1:= A_\xi^{\psi}\otimes\operatorname{id}_{V_\xi}\otimes\operatorname{id}_{W_{\alpha}},
\qquad
A_2:=\operatorname{id}_{V_\xi}\otimes A_\xi^{\psi}\otimes\operatorname{id}_{W_{\alpha}},
\]
so that 
\[A_1 A_2 = A_2 A_1 = A_\xi^{\psi}\otimes A_\xi^{\psi}\otimes \operatorname{id}_{W_{\alpha}}.\]
The following Lemma provides our ``model transition''; it is clear from the proof that the order 2 condition of $\xi$ is crucial.

\begin{lemma}\label{lemma:type-i-fixed-tensor-model}
The following diagram commutes:
\[
\begin{tikzcd}[column sep=huge,row sep=large]
X_{\xi,\alpha}
\arrow[r,"{A_1A_2}"]
\arrow[d,"\sim","{\operatorname{res}_{\bm{\chi}^{\#}}}"']
& X_{\xi,\alpha}
\arrow[d,"\sim"',"{\operatorname{res}_{\bm{\chi}^{\#}}}"]\\
I(\bm{\chi})
\arrow[r,"{R(r_{12},\bm{\chi})}"']
& I(\bm{\chi}).
\end{tikzcd}
\]
In particular, the map $A_1A_2$ is $M$-equivariant.
\end{lemma}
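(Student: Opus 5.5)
We factor $A_1A_2$ through the normalized intertwining operator on $M^{\#}$ and then use Lemma~\ref{lemma:normalized-intertwining-restriction}. By the tensor factorization \eqref{eq:normalized-intertwining-tensor-factorization}, applied to $\bm{\chi}^{\#}=(\xi,\mathbf{1};\xi,\mathbf{1};\alpha,\alpha^{-1})$ and $S=\{1,2\}$, we have
\[
R(w_{12},\bm{\chi}^{\#})=R_\xi\otimes R_\xi\otimes \operatorname{id}_{W_\alpha}:X_{\xi,\alpha}\longrightarrow I(w_{12}\bm{\chi}^{\#})=V_{\mathbf{1},\xi}\otimes V_{\mathbf{1},\xi}\otimes W_\alpha .
\]
Hence
\[
A_1A_2=D\circ R(w_{12},\bm{\chi}^{\#}),\qquad D:=D_\xi\otimes D_\xi\otimes \operatorname{id}_{W_\alpha}:I(w_{12}\bm{\chi}^{\#})\longrightarrow X_{\xi,\alpha}.
\]
Lemma~\ref{lemma:normalized-intertwining-restriction} gives $\operatorname{res}_{w_{12}\bm{\chi}^{\#}}\circ R(w_{12},\bm{\chi}^{\#})=R(r_{12},\bm{\chi})\circ \operatorname{res}_{\bm{\chi}^{\#}}$. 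Regularity is automatic here because $\bm{\chi}$ is unitary and $w_{12}\in W_{\bm{\chi}}$. So it suffices to prove
\[
\operatorname{res}_{\bm{\chi}^{\#}}\circ D=\operatorname{res}_{w_{12}\bm{\chi}^{\#}}\quad\text{on }I(w_{12}\bm{\chi}^{\#}).
\]

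This identity is a pointwise check. Identify $X_{\xi,\alpha}$ and $I(w_{12}\bm{\chi}^{\#})$ with spaces of functions on $M^{\#}=G^3$. Then $D$ multiplies a function $F$ by the scalar function $(g_1,g_2,g_3)\mapsto \xi(\det g_1)\xi(\det g_2)=\xi(\det g_1\det g_2)$. On $M$ we have $\det g_1=\det g_2$, so this factor equals $\xi(\det g_1)^2=1$, using $\xi^2=\mathbf{1}$. Therefore $(DF)|_M=F|_M$, which is the desired identity. This is exactly where the order-two hypothesis on $\xi$ is used. One can also note that $w_{12}\bm{\chi}^{\#}=(\mathbf{1},\xi;\mathbf{1},\xi;\alpha,\alpha^{-1})$ restricts to $\bm{\chi}$ on $T_M$, which is consistent with $w_{12}\in W_{\bm{\chi}}$.

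Combining the two steps,
\[
\operatorname{res}_{\bm{\chi}^{\#}}\circ A_1A_2=\operatorname{res}_{\bm{\chi}^{\#}}\circ D\circ R(w_{12},\bm{\chi}^{\#})=\operatorname{res}_{w_{12}\bm{\chi}^{\#}}\circ R(w_{12},\bm{\chi}^{\#})=R(r_{12},\bm{\chi})\circ\operatorname{res}_{\bm{\chi}^{\#}},
\]
which is the commutativity of the diagram. Since both vertical arrows are $M$-isomorphisms and $R(r_{12},\bm{\chi})$ is $M$-equivariant, $A_1A_2$ is $M$-equivariant. This is also visible from \eqref{eq:active-relation-covariance}: the twisting factor of $A_1A_2$ is $\xi(\det g_1)\xi(\det g_2)$, which is trivial on $M$.

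The only delicate point is bookkeeping. One must be sure that the tensor factorization of $R(w_{12},\bm{\chi}^{\#})$ matches $R_\xi=R_{\xi,\mathbf{1}}$ on each of the first two factors, with the same $\psi$ and Haar measure. This holds because $C^{\psi}(w_{12},\bm{\chi}^{\#})=C^\psi(\xi,\mathbf{1})^2$.
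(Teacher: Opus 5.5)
Your proof is correct and is essentially the paper's argument. You factor $A_1A_2$ through $R(w_{12},\bm{\chi}^{\#})=R_\xi\otimes R_\xi\otimes\operatorname{id}_{W_\alpha}$ via the determinant twist $D_\xi\otimes D_\xi\otimes\operatorname{id}_{W_\alpha}$, note that the twist disappears after restriction to $M$ because $\xi(\det g_1)\xi(\det g_2)=\xi(d)^2=1$, and conclude with Lemma~\ref{lemma:normalized-intertwining-restriction}. The only difference is cosmetic: the paper runs the twist $D_{12}$ from $X_{\xi,\alpha}$ to $I(w_{12}\bm{\chi}^{\#})$ and uses $D_\xi^2=\operatorname{id}$ to write $R(w_{12},\bm{\chi}^{\#})=D_{12}\circ A_1A_2$, whereas your direction gives $A_1A_2=D\circ R(w_{12},\bm{\chi}^{\#})$ directly.
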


\begin{proof}
Put
\[
X_{\xi,\alpha}^w:=I(w_{12}\bm{\chi}^{\#})
=V_{\mathbf{1},\xi}\otimes V_{\mathbf{1},\xi}\otimes W_{\alpha}
\]
and set
\[
D_{12}:=D_\xi\otimes D_\xi\otimes\operatorname{id}_{W_\alpha}:
X_{\xi,\alpha}\longrightarrow X_{\xi,\alpha}^w.
\]
By $D_\xi^2=\operatorname{id}$, we have
\begin{equation}\label{eq:type-i-factorization-before-closing}
R(w_{12},\bm{\chi}^{\#})
=R_{\xi}\otimes R_{\xi}\otimes\operatorname{id}_{W_{\alpha}}
=D_{12}\circ (A_1A_2).
\end{equation}
For $m=(g_1,g_2,g_3)\in M$ with $d=\det g_1=\det g_2=\det g_3$, from
\[
(D_{12}f)(m)=\xi(d)^2f(m)=f(m),
\]
the following diagram commutes:
\begin{equation}
    \label{eq:type-i-tensor-restriction-diagram} 
    \begin{tikzcd}
        X_{\xi,\alpha}=I(\bm{\chi}^{\#})
        \arrow[dr, "{\operatorname{res}_{\bm{\chi}^{\#}}}"']
        \arrow[rr,"{ D_{12}}"]
        && I(w_{12}\bm{\chi}^{\#})=X_{\xi,\alpha}^{w}
        \arrow[ld, "{ \operatorname{res}_{w_{12}\bm{\chi}^{\#}}}"]\\
        & I(\bm{\chi}) &
    \end{tikzcd}
\end{equation}
Combining \eqref{eq:type-i-factorization-before-closing},
\eqref{eq:type-i-tensor-restriction-diagram}, and the compatibility relatioin
\eqref{eq:normalized-intertwining-restriction}, we obtain
\[
{ \operatorname{res}_{\bm{\chi}^{\#}}\circ (A_1A_2)
=R(r_{12},\bm{\chi})\circ\operatorname{res}_{\bm{\chi}^{\#}},
}
\]
which is the asserted diagram.  Since both vertical arrows
are $M$-isomorphisms and the bottom arrow is $M$-equivariant, $A_1A_2$ is
$M$-equivariant as well.
\end{proof}

\subsection{The open-orbit flip}\label{subsec:open-orbit-flip}

Let $\gamma:F^\times\longrightarrow\CC^1$ be a unitary character, and
\[
P_{12}:X_{\xi,\gamma}\longrightarrow X_{\xi,\gamma},
\qquad
P_{12}(v_1\otimes v_2\otimes v_3):=v_2\otimes v_1\otimes v_3
\]
the 1-2 component flip map on $X_{\xi,\gamma} = V_\xi \otimes V_\xi \otimes W_\gamma$, which is a $G^{\Delta}$-isomorphism. The main result of this subsection is the following computation. 

\begin{theorem} \label{theorem:type-i-open-orbit-flip}
Let $L_\gamma^{\#}\in\Hom_{G^\Delta}(X_{\xi,\gamma},\mathbf{1})$ be a nonzero trilinear form on $X_{\xi,\gamma}$, then we have
\begin{equation}\label{eq:type-i-general-flip-scalar}
L_\gamma^{\#}\circ P_{12}=\gamma(-1)L_\gamma^{\#}.
\end{equation}
\end{theorem}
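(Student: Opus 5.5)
We follow the hint in the introduction and use D. Prasad's Mackey filtration of $X_{\xi,\gamma}|_{G^\Delta}$ for $B^3\backslash G^3/G^\Delta$. The open orbit is represented by a point $(1,\dot w_0^{-1},\eta)$, say with $\eta=\dot w_0^{-1}u(1)$ or a similar element chosen so that the three points $\infty,0,1$ (or $\infty,0,-1$) of $\PP^1$ are distinct. Its stabilizer in $G$ is the center $Z$, since $PGL_2$ acts simply transitively on triples of distinct points. The open-orbit piece is then $\ind_Z^G(\omega)$ with $\omega=\xi^2\gamma\gamma^{-1}=\mathbf 1$. Prasad's argument shows that a nonzero invariant form $L^\#_\gamma$ is nonzero on this piece. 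It is represented there by an integral of the form
\[
L(f_1\otimes f_2\otimes f_3)=\int_{Z\backslash G} f_1(g)\,f_2(\dot w_0^{-1} g)\,f_3(\eta g)\,dg,
\]
suitably regularized. Because the form is unique up to scalar, it suffices to compute $L\circ P_{12}$ on test vectors $f_1\otimes f_2\otimes f_3$ supported in the open cell, where the integral converges absolutely.

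First we describe $P_{12}$ geometrically. Swapping the first two factors sends the triple of points $(x_1,x_2,x_3)$ to $(x_2,x_1,x_3)$. We choose $h\in G$ that fixes the third point and interchanges the first two. In projective coordinates with points $\infty,0,1$, the element $g\mapsto$ ``$x\mapsto x/(x-1)$'' works, and it is represented by a matrix $h$ of determinant $-1$, for instance $h=\begin{bmatrix}1&0\\1&-1\end{bmatrix}$. We then substitute $g\mapsto h g$ in the integral. This expresses $L(f_2\otimes f_1\otimes f_3)$ as $L(f_1\otimes f_2\otimes f_3)$ times factors coming from rewriting $\dot w_0^{-1}h=b_1\cdot 1$, $h=b_2\dot w_0^{-1}$ and $\eta h=b_3\eta$ with $b_i\in B$. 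Each factor is a value of the inducing character on $b_i$, together with $\delta_B^{1/2}$. Since $\det h=-1$, the determinants of the $b_i$ are $\pm1$ times $\det$ of the representatives.

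The main computation is to track the torus parts of the $b_i$. On the two $V_\xi$ factors the characters are $(\xi,\mathbf 1)$ applied to diagonal entries. We expect these to contribute $\xi(a)\xi(a')$ with $aa'$ a square times $(-1)^{?}$, and we must check that the $\xi$-contributions cancel, using $\xi^2=\mathbf 1$. On $W_\gamma$ the character is $(\gamma,\gamma^{-1})$ evaluated on $\diag(t_1,t_2)$ with $t_1t_2=-1$. This gives $\gamma(t_1)\gamma(t_2)^{-1}=\gamma(t_1^2)\gamma(-1)^{-1}$, and we should find $t_1=\pm1$, which yields $\gamma(-1)$. The $\delta_B$ factors have absolute value $1$ for these unipotent-conjugate decompositions.

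The main obstacle is this bookkeeping: choosing representatives so that each $b_i$ is explicit, and making sure the contributions of $\xi$ really cancel rather than leaving a stray $\xi(-1)$. The type I result depends on exactly this distinction between $\gamma(-1)$ here and $\xi(-1)$ in the final sign, so a leftover factor would change the answer. As a cross-check we can take $\gamma$ trivial and $\xi$ unramified, where the spherical-vector nonvanishing result of Prasad \cite[Theorem 1.3]{prasad} gives a positive value invariant under the flip, consistent with sign $+1$.
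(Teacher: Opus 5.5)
Your strategy is correct and gives a proof that differs from the paper's and is shorter. However, the one explicit choice you make is wrong, and the decisive computation is only announced, not carried out.

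The matrix $h=\begin{bmatrix}1&0\\1&-1\end{bmatrix}$, i.e.\ $x\mapsto x/(x-1)$, fixes $0$ and interchanges $1$ and $\infty$. So it does not swap your first two base points; concretely, $\dot w_0^{-1}h=\begin{bmatrix}-1&1\\1&0\end{bmatrix}\notin B$. For the base point $(1,\dot w_0^{-1},\eta)$ with $\eta=\dot w_0^{-1}u(1)$, the right element is $h=s_0$, and then
\[
\dot w_0^{-1}s_0=\diag(-1,1),\qquad s_0=\diag(-1,1)\,\dot w_0^{-1},\qquad \eta s_0=\begin{bmatrix}-1&-1\\0&1\end{bmatrix}\eta .
\]
Thus all three $b_i$ have diagonal part $\diag(-1,1)$ and every $\delta_B$-factor is $1$. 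Put $I(F):=\int_{Z\backslash G}F(g,\dot w_0^{-1}g,\eta g)\,dg$. The substitution $g\mapsto s_0g$ gives $I(P_{12}F)=\xi(-1)^2\gamma(-1)I(F)=\gamma(-1)I(F)$ for every $F$ supported on the open orbit, as you predicted. No regularization is needed there, since such $F$ are compactly supported modulo $Z$.

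Compared with the paper: the paper uses only the two-factor filtration of $V_\xi\otimes V_\xi$. Its open orbit $T\backslash G$ leads, via Frobenius reciprocity, to $\Hom_T(W_\gamma,\xi\boxtimes\xi)$, where the flip becomes $\ell\mapsto\ell\circ\tau_\gamma(s_0)$. The sign is then computed on an explicit Waldspurger torus period after the twist $D_\xi$, which needs Waldspurger's multiplicity one and a convergence estimate. Your three-point orbit has stabilizer $Z$, so the invariant functional there is just Haar integration on $Z\backslash G$. Its uniqueness is elementary, and the sign is read off from three explicit Bruhat decompositions.

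The price is that you must show $L_\gamma^{\#}$ is nonzero on the triple open orbit. You attribute this to ``Prasad's argument'', but it has to be checked on all four boundary strata:
\begin{itemize}
\item the three strata $\{x_i=x_j\neq x_k\}$ give $\ind_T^G(\theta)$ with $|\theta|=\delta_B^{\pm1/2}\neq 1$;
\item the closed stratum gives $\ind_B^G(\theta')$ with $|\theta'|=\delta_B^{3/2}\neq\delta_B$.
\end{itemize}
So none of them carries an invariant functional. The paper only needs the analogous vanishing for the single piece $|\cdot|^{1/2}\times|\cdot|^{-1/2}$. This check is routine for unitary data, but it should be stated. In both proofs the sign ultimately comes from the element $s_0$, which has determinant $-1$.
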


The computation starts from the Mackey filtration established by Prasad in the proof of his unramified test vector theorem \cite[Theorem 5.10]{prasad}, so we first recall his results. Temporarily, for $1\leq i\leq 3$ let $\chi_i:T\longrightarrow \CC^\times$ be a character of $T$ and $(I_B^G(\chi_i),V_i)$ the normalized parabolic induction obtained from $\chi_i$. We have a canonical isomorphism 
\[\Hom_{G^\Delta}(V_1\otimes V_2\otimes V_3,\CC)\simeq \Hom_G(V_1\otimes V_2,V_3^\vee),\]
which leads us to study the $G$-module $(V_1\otimes V_2)|_{G}$ via the Mackey theory. Identifying $B\backslash G$ with $\PP^1$ in the natural way, the double coset space
\[(B\times B)\backslash (G\times G)/G^{\Delta}\simeq (\PP^1\times \PP^1)/G^{\Delta}\]
has two elements:
\begin{enumerate}
    \item the closed orbit $\Delta =\{(x,x)\mid x\in \PP^1\}$, with the stabilizer of $(x_0,x_0)$ given by $B\subset G^{\Delta}$;
    \item the open orbit $\Omega=\{(x,y)\in \PP^1\times \PP^1\mid x\neq y\}$, with the point $(x_0,x_0s_0)=(B,Bs_0)$ having stabilizer $B\cap (s_0 B s_0^{-1}) = T\subset G^\Delta$.
\end{enumerate}
Here $x_0\in \PP^1\simeq (B\backslash G)$ corresponds to the base point $B \cdot 1\in B\backslash G$, and we recall
\[
s_0:=\begin{bmatrix}0&1\\1&0\end{bmatrix}.
\]
We have a $G$-equivariant identification of the open orbit
\[
T\backslash G\xrightarrow{\sim}\Omega,\qquad
Tg\longmapsto(Bg,Bs_0g).
\]

With these preparations, we can obtain a $G$-module exact sequence (\cite[pp. 19]{prasad})
\begin{equation}\label{eq:prasad-mackey-filtration} 0 \longrightarrow \mathrm{ind}_T^G(\chi_1\chi_2^w)\overset{\mathrm{ext}}{\longrightarrow} V_1\otimes V_2\overset{\mathrm{res}_\Delta}{\longrightarrow} I_B^G(\chi_1\chi_2\delta_B^{1/2})\longrightarrow 0,\end{equation}
where $\chi_i^w(t):=\chi_i(s_0ts_0^{-1})$, and the two maps $\mathrm{ext},\mathrm{res}_\Delta$ are defined as follows. 
\begin{itemize}
    \item The map
    \begin{equation}\label{eq:mackey-restriction} \mathrm{res}_\Delta:V_1\otimes V_2\longrightarrow I_B^G(\chi_1\chi_2\delta_B^{1/2}),\qquad F\longmapsto \bigl(g\mapsto F(g,g)\bigr)\end{equation}
    is the restriction to the closed orbit. 
    \item The map 
    \begin{equation}\label{eq:mackey-extension}\mathrm{ext}: \mathrm{ind}_T^G(\chi_1\chi_2^w)\longrightarrow V_1\otimes V_2\end{equation}
    is extension by $0$ to the closed orbit. More precisely, we identify the open orbit $\Omega$ with $T\backslash G$, and for $f\in \mathrm{ind}_T^G(\chi_1\chi_2^w)$ define 
    \[ \mathrm{ext}(f)(g,g):=0,\qquad \mathrm{ext}(f)(g,s_0 g):=f(g),\qquad g\in G.\]
    Then we extend $\mathrm{ext}(f)$ to a function on $G\times G$ by $B\times B$-equivariance, which is well-defined.
\end{itemize}

This ends our brief review of Prasad's Mackey filtration \eqref{eq:prasad-mackey-filtration}. 

Now we apply this to our case $\chi_1 = \chi_2 = \xi \boxtimes \mathbf{1}$ and $\chi_3 = \gamma\boxtimes \gamma^{-1}$, where $\xi$ is of order 2 and $\gamma$ is unitary. Then the exact sequence \eqref{eq:prasad-mackey-filtration} becomes 
\begin{equation}\label{eq:concrete-prasad-mackey} 0 \longrightarrow \mathrm{ind}_T^G(\xi \boxtimes \xi)\overset{\mathrm{ext}}{\longrightarrow} V_\xi \otimes V_\xi\overset{\mathrm{res}_{\Delta}}{\longrightarrow} |\cdot|^{1/2}\times |\cdot|^{-1/2}\longrightarrow 0.\end{equation}
The first key observation is the following.
\begin{lemma}\label{lemma:open-orbit-flip-diagram}
Define
\[
\mathscr S:\mathrm{ind}_T^G(\xi\boxtimes\xi)
\longrightarrow
\mathrm{ind}_T^G(\xi\boxtimes\xi),
\qquad
(\mathscr Sf)(g):=f(s_0g),
\]
and by abuse of notation we still write
\[
P_{12}:V_\xi\otimes V_\xi\longrightarrow V_\xi\otimes V_\xi,
\qquad
P_{12}(v_1\otimes v_2):=v_2\otimes v_1,
\]
for the flip map. Then the following diagram commutes.
\[
\begin{tikzcd}
\mathrm{ind}_T^G(\xi\boxtimes\xi)
\arrow[r,"\mathrm{ext}"]
\arrow[d,"\mathscr S"']
&
V_\xi\otimes V_\xi
\arrow[d,"P_{12}"]
\\
\mathrm{ind}_T^G(\xi\boxtimes\xi)
\arrow[r,"\mathrm{ext}"']
&
V_\xi\otimes V_\xi.
\end{tikzcd}
\]
\end{lemma}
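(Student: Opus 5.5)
We identify $V_\xi\otimes V_\xi$ with smooth functions $F$ on $G\times G$ satisfying $F(b_1g_1,b_2g_2)=(\xi\boxtimes\mathbf 1)\delta_B^{1/2}(b_1)\,(\xi\boxtimes\mathbf 1)\delta_B^{1/2}(b_2)\,F(g_1,g_2)$. Under this identification the flip is $(P_{12}F)(g_1,g_2)=F(g_2,g_1)$. Both $\mathrm{ext}(f)$ and $\mathrm{ext}(\mathscr Sf)$ are determined by their values on pairs $(g,g)$ and $(g,s_0g)$, via $B\times B$-equivariance together with $G\times G=(B\times B)\{(g,g)\}\cup(B\times B)\{(g,s_0g)\}$. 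So it suffices to compare $P_{12}\,\mathrm{ext}(f)$ and $\mathrm{ext}(\mathscr S f)$ at these two kinds of points.

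\textbf{Closed orbit.} We have $(P_{12}\mathrm{ext} f)(g,g)=\mathrm{ext}(f)(g,g)=0=\mathrm{ext}(\mathscr Sf)(g,g)$.

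\textbf{Open orbit.} We have $(P_{12}\mathrm{ext} f)(g,s_0g)=\mathrm{ext}(f)(s_0g,g)$. Writing $h=s_0g$, this equals $\mathrm{ext}(f)(h,s_0^{-1}h)=\mathrm{ext}(f)(h,s_0h)$ because $s_0^2=1$, and hence equals $f(h)=f(s_0g)=(\mathscr Sf)(g)=\mathrm{ext}(\mathscr Sf)(g,s_0g)$.

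It remains to check that $\mathscr S$ is well defined, i.e.\ that it preserves $\mathrm{ind}_T^G(\xi\boxtimes\xi)$. For $t\in T$ we have $f(s_0tg)=f(t^{w}s_0g)$ with $t^w=s_0ts_0$. Since $(\xi\boxtimes\xi)(t^w)=(\xi\boxtimes\xi)(t)$, the twisted character is symmetric, so $\mathscr S f$ has the right $T$-equivariance. Compact support modulo $T$ is preserved because $s_0$ normalizes $T$. The map $\mathscr S$ is a $G$-map since it acts by left translation while $G$ acts by right translation.

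The one point needing care is the bookkeeping of the $T$-character in $\mathrm{ext}$. One has to check that the definition $\mathrm{ext}(f)(g,s_0g)=f(g)$ is compatible with the character $\chi_1\chi_2^w$ on the stabilizer, and that after the flip the roles of $\chi_1$ and $\chi_2$ swap, turning $\chi_1\chi_2^w$ into $\chi_2\chi_1^w$. This is harmless exactly because $\chi_1=\chi_2$ and $\xi\boxtimes\xi$ is $s_0$-invariant. The modulus factors also match, since $\delta_B^{1/2}(t)\delta_B^{1/2}(t^w)=1$ for each pair. I expect this equivariance check to be the only nontrivial verification, and I would carry it out directly in coordinates $t=\mathrm{diag}(a,b)$.
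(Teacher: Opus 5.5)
Your proposal is correct and follows the paper's proof essentially verbatim: both sides vanish on the closed diagonal orbit, they agree at $(g,s_0g)$ because $s_0^2=1$, and $\mathscr S$ is well defined because $\xi\boxtimes\xi$ is $s_0$-invariant. The $T$-character check for $\mathrm{ext}$ that you flag at the end is not an extra step of this lemma; it is part of the setup of Prasad's filtration, where well-definedness of $\mathrm{ext}$ is already asserted.
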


\begin{proof}
It is easy to check that $\mathscr S$ is a well-defined $G$-isomorphism, by
\[
s_0\diag(a,d)s_0^{-1}=\diag(d,a),\qquad a,d\in F^\times
\]
and the $s_0$-invariance of the character $\xi\boxtimes\xi$. In the induced
model $V_\xi\otimes V_\xi$, the flip is given by
\[
(P_{12}F)(g_1,g_2)=F(g_2,g_1),\qquad g_1,g_2\in G,~F\in V_\xi\otimes V_\xi.
\]
As functions on $G\times G$, both $P_{12}\circ\mathrm{ext}(f)$ and
$\mathrm{ext}\circ\mathscr S(f)$ vanish on the closed diagonal orbit. On the
open orbit, by $s_0^2=1$ we have
\[
\bigl(P_{12}\circ\mathrm{ext}(f)\bigr)(g,s_0g)=\mathrm{ext}(f)(s_0g,g)=\mathrm{ext}(f)\bigl(s_0g,s_0(s_0g)\bigr)=f(s_0g),
\]
whereas
\[
\bigl(\mathrm{ext}\circ\mathscr S(f)\bigr)(g,s_0g)
=(\mathscr Sf)(g)
=f(s_0g).
\]
\end{proof}

We apply the functor $\Hom_G(-,W_{\gamma}^\vee)$ to this exact sequence, and from $\Hom_G(|\cdot|^{1/2}\times |\cdot|^{-1/2},W_{\gamma}^\vee)=0$ we get an embedding
\begin{equation}
    \label{eq:trilinear-form-open-orbit-embedding} \mathrm{ext}^*: \Hom_{G^\Delta}(V_\xi \otimes V_\xi \otimes W_\gamma,\CC)\simeq \Hom_G(V_\xi \otimes V_\xi, W_{\gamma}^\vee)\hookrightarrow \Hom_G(\mathrm{ind}_T^G(\xi \boxtimes \xi),W_{\gamma}^\vee).
\end{equation}
Next, the Frobenius reciprocity for compact induction gives 
\begin{equation}\label{eq:frobenius-reciprocity} \mathrm{Frob}_{\gamma}:\Hom_G(\mathrm{ind}_T^G(\xi \boxtimes \xi),W_{\gamma}^\vee)\simeq \Hom_T(\xi\boxtimes \xi, W_{\gamma}^{\vee})\simeq \Hom_T(W_\gamma,\xi\boxtimes \xi),\end{equation}
where we used $\xi^{-1}=\xi$ for the last isomorphism. 

Since the central character of $(\tau_\gamma,W_\gamma)$ is trivial, which coincides with the restriction of $\xi\boxtimes \xi$ to the center of $G$, by a theorem of Waldspurger \cite[Lemma 5.6 (a)]{prasad} we have 
\[\dim \Hom_T(W_\gamma,\xi\boxtimes \xi)= 1.\]
Now from the previous Lemma we then get the following key computational result. 

\begin{proposition}\label{proposition:reduction-to-waldspurger-functional} The composite $\mathrm{Frob}_\gamma\circ\operatorname{ext}^*$ gives an isomorphism 
    \[\Hom_{G^\Delta}(X_{\xi,\gamma}, \CC)\overset{\sim}{\longrightarrow} \operatorname{Hom}_T(W_\gamma,\xi\boxtimes\xi),\]
    and the following diagram commutes:
\begin{equation}\label{eq:waldspurger-functional-diagram}\begin{tikzcd} \Hom_{G^\Delta}(X_{\xi,\gamma}, \CC) \arrow[rr,"\sim"',"\mathrm{Frob}_\gamma\circ\operatorname{ext}^*"] \arrow[d,"{L_\gamma^{\#}\mapsto L_\gamma^{\#}\circ P_{12}}"'] && \operatorname{Hom}_T(W_\gamma,\xi\boxtimes\xi) \arrow[d,"{\ell\mapsto\ell\circ\tau_\gamma(s_0)}"] \\ \Hom_{G^\Delta}(X_{\xi,\gamma}, \CC) \arrow[rr,"\sim","\mathrm{Frob}_\gamma\circ\operatorname{ext}^*"'] && \operatorname{Hom}_T(W_\gamma,\xi\boxtimes\xi). \end{tikzcd}\end{equation}
In particular, for $0\neq L_\gamma^{\#}\in \Hom_{G^\Delta}(X_{\xi,\gamma},\CC)$, $0\neq \ell \in \Hom_T(W_\gamma,\xi\boxtimes \xi)$ and $c\in \CC$, we have 
\begin{equation} \label{eq:trilinear-torus-reduction}
    L_\gamma^{\#}\circ P_{12} = cL_\gamma^{\#} \Longleftrightarrow \ell\circ \tau_{\gamma}(s_0)=c\ell.
\end{equation}
\end{proposition}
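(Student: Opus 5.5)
We need to prove Proposition \ref{proposition:reduction-to-waldspurger-functional}: the composite is an isomorphism, and the diagram commutes.

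For the isomorphism, the plan is to show that $\mathrm{ext}^*$ is injective and then use dimensions. Injectivity is \eqref{eq:trilinear-form-open-orbit-embedding}. Since $\mathrm{Frob}_\gamma$ is an isomorphism, the composite is injective into $\Hom_T(W_\gamma,\xi\boxtimes\xi)$, which is one-dimensional by Waldspurger. By \cite{prasad} the source is also one-dimensional: the central character of $X_{\xi,\gamma}$ is $\xi^2\gamma\gamma^{-1}=\mathbf{1}$ on $Z^\Delta$, and Prasad's theorem for principal series gives dimension exactly one. An injective linear map between one-dimensional spaces is an isomorphism. Alternatively, one can avoid Prasad's nonvanishing by also showing surjectivity of $\mathrm{ext}^*$ from $\mathrm{Ext}^1_G(|\cdot|^{1/2}\times|\cdot|^{-1/2},W_\gamma^\vee)=0$; this vanishing holds because the cuspidal supports differ, since $\gamma$ is unitary.

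For the commutativity of \eqref{eq:waldspurger-functional-diagram}, first note that $P_{12}$ commutes with $G^\Delta$. Under $\Hom_{G^\Delta}(X,\CC)\simeq\Hom_G(V_\xi\otimes V_\xi,W_\gamma^\vee)$, precomposition with $P_{12}$ on the first two factors corresponds to $\Phi\mapsto\Phi\circ P_{12}$. By Lemma \ref{lemma:open-orbit-flip-diagram}, $\mathrm{ext}^*(\Phi\circ P_{12})=\Phi\circ\mathrm{ext}\circ\mathscr S=\mathrm{ext}^*(\Phi)\circ\mathscr S$. It therefore remains to compute Frobenius reciprocity twisted by $\mathscr S$.

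Recall that for $\Psi\in\Hom_G(\mathrm{ind}_T^G(\xi\boxtimes\xi),W_\gamma^\vee)$, the corresponding $T$-map $\xi\boxtimes\xi\to W_\gamma^\vee$ is $1\mapsto\Psi(f_1)$. Here $f_1$ is the function supported on $T\cdot K_0$ for a small open compact $K_0$, suitably normalized; equivalently, take the limit over $K_0$, or use the dual description $\Psi\leftrightarrow w\mapsto$ the evaluation at $1$ of the function $\Psi^\vee(w)\in\mathrm{Ind}_T^G$. The cleanest approach is the dual one. $\Psi$ corresponds to $\Psi^t:W_\gamma\to\mathrm{Ind}_T^G(\xi\boxtimes\xi)$ (smooth induction, using $\xi^{-1}=\xi$), and $\ell=\mathrm{ev}_1\circ\Psi^t$. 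The transpose of $\mathscr S$ on $\mathrm{Ind}_T^G$ is again $F\mapsto F(s_0\,\cdot)$, since $s_0$ normalizes $T$ and preserves the character, and $s_0^{-1}=s_0$ makes the pairing invariant. Hence the new functional is
\[
w\mapsto(\Psi^tw)(s_0)=(\Psi^t\tau_\gamma(s_0)w)(1)=\ell(\tau_\gamma(s_0)w).
\]
This is exactly $\ell\circ\tau_\gamma(s_0)$.

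The last claim \eqref{eq:trilinear-torus-reduction} then follows because both vertical maps are linear and both spaces are one-dimensional lines identified by the horizontal isomorphism. The main obstacle is the bookkeeping of the duality in the Frobenius step, namely checking that transposing $\mathscr S$ against the natural pairing between $\mathrm{ind}_T^G(\xi\boxtimes\xi)$ and $\mathrm{Ind}_T^G(\xi\boxtimes\xi)$ (integration over $T\backslash G$) gives $F\mapsto F(s_0\cdot)$ with no extra scalar. This holds because right translation by $s_0$ on $T\backslash G$ preserves the invariant measure, and $T\backslash G$ is unimodular.
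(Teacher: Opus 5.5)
Your proposal is correct and follows the same route as the paper: injectivity of $\mathrm{ext}^*$ from the Mackey sequence and the dimension count via Prasad and Waldspurger, then Lemma~\ref{lemma:open-orbit-flip-diagram} and naturality of Frobenius reciprocity. You usefully make that naturality explicit through the duality $\mathrm{ind}_T^G(\xi\boxtimes\xi)^\vee\simeq\mathrm{Ind}_T^G(\xi\boxtimes\xi)$ and the transpose of $\mathscr S$; drop your first description via $\Psi(f_1)$, since a $T$-equivariant functional lies in the algebraic dual of $W_\gamma$, not in $W_\gamma^\vee$. The only slip is the measure justification: $Tg\mapsto Ts_0g$ is left multiplication by $s_0\in N_G(T)$, not a right translation, but it commutes with the right $G$-action and is an involution, so it preserves the $G$-invariant measure on $T\backslash G$ and no scalar appears.
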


\begin{proof}By \eqref{eq:trilinear-form-open-orbit-embedding}, the composite $\mathrm{Frob}_\gamma\circ\operatorname{ext}^*$ is injective. Its domain (resp. codomain) is one-dimensional by Prasad's theorem (resp. Waldspurger's theorem) recalled above, thus the composite is an isomorphism. Lemma \ref{lemma:open-orbit-flip-diagram} and the naturality of
compact Frobenius reciprocity give
\[
\begin{aligned}
\mathrm{Frob}_\gamma\circ
\operatorname{ext}^*(L_\gamma^{\#}\circ P_{12})
&=\mathrm{Frob}_\gamma
  \bigl(\operatorname{ext}^*(L_\gamma^{\#})\circ\mathscr S\bigr)\\
&=\mathrm{Frob}_\gamma\circ
\operatorname{ext}^*(L_\gamma^{\#})\circ\tau_\gamma(s_0),
\end{aligned}
\]
which gives the commutativity of the diagram. The equivalence \eqref{eq:trilinear-torus-reduction} then follows immediately.
\end{proof}

We can easily produce an explicit element (a ``Waldspurger functional") in $\operatorname{Hom}_T(W_\gamma,\xi\boxtimes\xi)$, based on the following lemma. 

\begin{lemma}\label{lemma:waldspurger-functional-formula}
    Let $\delta:F^\times \longrightarrow\CC^1$ be a unitary character, we define 
    \begin{equation}\label{eq:definition-type-i-torus-functional}
\varphi_\delta: W_{\delta}\longrightarrow \CC,\qquad \varphi_\delta(f):=\int_{F^\times}f(\overline{u}(x))\delta(x)|x|^{1/2}\,d^\times x,
\qquad
\overline{u}(x):=\begin{bmatrix}1&0\\x&1\end{bmatrix},
\end{equation}
then we have
\begin{equation}\label{eq:type-i-torus-hom-dimension}
\Hom_T(\tau_\delta,\mathbf{1})=\CC\varphi_\delta.
\end{equation}
\end{lemma}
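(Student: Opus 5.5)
The plan has three parts: check that $\varphi_\delta$ is well defined on all of $W_\delta$, check its $T$-equivariance, and show it is nonzero. Equality with the whole space then comes from Waldspurger's theorem. Since $\tau_\delta=\delta\times\delta^{-1}$ has trivial central character, \cite[Lemma 5.6 (a)]{prasad} gives $\dim\Hom_T(\tau_\delta,\mathbf{1})=1$. So once $\varphi_\delta$ is shown to be a nonzero element of this space, \eqref{eq:type-i-torus-hom-dimension} follows.

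\emph{Convergence.} For $f\in W_\delta$ the function $x\mapsto f(\overline{u}(x))$ is locally constant on $F$, hence bounded near $x=0$. There the integrand is dominated by $|x|^{1/2}$, which is integrable against $d^\times x$. For $|x|$ large we use the Iwasawa-type identity
\[
\overline{u}(x)=\begin{bmatrix}x^{-1}&1\\0&x\end{bmatrix}\begin{bmatrix}0&-1\\1&x^{-1}\end{bmatrix}.
\]
It gives $f(\overline{u}(x))=\delta(x)^{-2}|x|^{-1}f(\dot{w}_0^{-1}u(-x^{-1}))$, using the normalized induction $\delta\boxtimes\delta^{-1}$ with $\delta_B^{1/2}(\diag(a,d))=|a/d|^{1/2}$. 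Since $f(\dot w_0^{-1}u(y))$ is bounded for $y$ near $0$, the integrand is $O(|x|^{-1/2})$ as $|x|\to\infty$, which is again integrable for $d^\times x$. Hence the integral converges absolutely.

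\emph{Equivariance.} Take $t=\diag(a,d)$ and substitute. From $\overline{u}(x)\diag(a,d)=\diag(a,d)\,\overline{u}(xd/a)$ we get
\[
(\tau_\delta(t)f)(\overline{u}(x))=\delta(a)\delta(d)^{-1}|a/d|^{1/2}f(\overline{u}(xd/a)).
\]
Changing variables $y=xd/a$ produces the factor $\delta(a/d)|a/d|^{1/2}$ from $\delta(x)|x|^{1/2}$. Together with the prefactor this gives total weight $\delta(a/d)^2|a/d|$, which is not yet trivial. So the exponents and characters must be re-checked, because the correct invariant functional should have the form $\int f(\overline u(x))\,\delta^{\pm1}(x)|x|^{\mp 1/2}\cdots$.

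I expect this bookkeeping to be the main place where a slip can occur. I will first recompute carefully with $(\tau(t)f)(g)=f(gt)$, where the conjugation gives $\overline u(x)t=t\,\overline u(xa/d)$ and the sign conventions flip. With that convention the substitution $y=xa/d$ produces $\delta(d/a)|d/a|^{1/2}$. This exactly cancels $\delta(a)\delta(d)^{-1}|a/d|^{1/2}$, so $\varphi_\delta\circ\tau_\delta(t)=\varphi_\delta$. The asymptotic estimate above should be redone with the same conventions, and I expect it to go through by the same argument.

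\emph{Nonvanishing.} Choose $f$ supported in the big cell $B\overline{U}$ with $f(\overline u(x))=\phi(x)$ for an arbitrary $\phi\in C_c^\infty(F^\times)$; such $f$ exist because $B\overline{U}$ is open and $B\times\overline U\to B\overline U$ is a homeomorphism. Then $\varphi_\delta(f)=\int\phi(x)\delta(x)|x|^{1/2}d^\times x$. Taking $\phi$ to be the characteristic function of a small neighbourhood of $1$ makes this integral nonzero, which completes the proof.
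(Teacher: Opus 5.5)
Your proposal is correct and follows essentially the same route as the paper: absolute convergence by bounding the integrand near $0$ and, via a Bruhat-type identity, near $\infty$; $T$-invariance by substitution, using $\overline{u}(x)t=t\,\overline{u}(ax/d)$ for $t=\mathrm{diag}(a,d)$ (your first attempt with $xd/a$ was just a wrong conjugation formula, not a change of convention, and your corrected computation is the right one); nonvanishing via a section supported in the open cell; and equality from Waldspurger's multiplicity-one theorem. One harmless slip: the second factor in your identity is $\dot{w}_0^{-1}u(x^{-1})$ rather than $\dot{w}_0^{-1}u(-x^{-1})$, which does not affect the $O(|x|^{-1/2})$ bound.
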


\begin{proof}
    The formal $T$-invariance of $\varphi_\delta$ is a simple direct computation: from the matrix identity
\[
\overline{u}(x)t=t\,\overline{u}(ad^{-1}x),\qquad t=\diag(a,d)\in T,
\]
one can easily verify $\varphi_\delta\circ\tau_\delta(t)=\varphi_\delta$ 
by the substitution $x\longmapsto da^{-1}x$ in the integral.
It remains to verify the absolute convergence and nonvanishing.

We let $K:=\GL_2(\mathcal{O}_F)\subset G$ and set $C_f:=\sup_{k\in K}|f(k)|$ for our fixed element $f\in W_\delta$. For $x\in F$ with $|x|\leq 1$, $\overline{u}(x)\in K$ gives $|f(\overline{u}(x))|\leq C_f$, combined with unitarity of $\delta$ we get 
\[\left| f(\overline{u}(x))\delta(x)|x|^{1/2}\right|\leq C_f|x|^{1/2},\qquad |x|\leq 1,\]
With the fixed measure $d^\times x=dx/|x|$, put
$c_F:=\operatorname{vol}_{dx}(\mathcal O_F^\times)$, then we have
\begin{equation}
    \label{eq:torus-integrability-bound} \int_{|x|\leq 1} |x|^{1/2}d^\times x = \int_{|x|\leq 1} |x|^{-1/2}d x =c_F\sum_{n\geq 0}q_F^{-n/2}<\infty .
\end{equation}
For $x\in F$ with $|x|\geq 1$, we first use the identity
\begin{equation}\label{eq:lower-unipotent-weyl-identity}\overline{u}(x) = \begin{bmatrix}
    -x^{-1} & 1 \\ & x
\end{bmatrix}\begin{bmatrix}
1 & \\ x^{-1} & 1
\end{bmatrix} \begin{bmatrix}
    & 1 \\ 1 &
\end{bmatrix} = \begin{bmatrix}
    -x^{-1} & 1 \\ & x
\end{bmatrix}\overline{u}(x^{-1})s_0\end{equation}
and $|x^{-1}|\leq 1$ to get 
\[|f(\overline{u}(x))|=|\delta(-1)\delta(x)^{-2}|x|^{-1}f(\overline{u}(x^{-1})s_0)|\leq C_f|x|^{-1}\]
by unitarity of $\delta$. Then we have 
\[\left| f(\overline{u}(x))\delta(x)|x|^{1/2}\right|\leq C_f|x|^{-1/2},\qquad |x|\geq 1,\]
and hence $|\varphi_\delta(f)|\leq2c_FC_f\sum_{n\geq0}q_F^{-n/2}$ by the bound \eqref{eq:torus-integrability-bound}. To see that $\varphi_\delta$ is nonzero, choose a compact open subset $X\subset F^\times$ and a section $f_X\in W_\delta$, supported on the corresponding compact subset of the open Bruhat cell, so that
$f_X(\overline u(x))=\delta(x)^{-1}|x|^{-1/2}$ for $x\in X$.  Then
$\varphi_\delta(f_X)=\operatorname{vol}_{d^\times x}(X)\neq0$. 
\end{proof}

\begin{corollary}\label{corollary:torus-weyl-scalar} For $\varphi \in \Hom_T(\tau_\delta,\mathbf{1})$, we have
    \begin{equation}\label{eq:type-i-torus-weyl-scalar}
\varphi\circ\tau_\delta(s_0)=\delta(-1)\varphi.
\end{equation}
\end{corollary}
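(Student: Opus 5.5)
Since $\Hom_T(\tau_\delta,\mathbf{1})=\CC\varphi_\delta$ by Lemma \ref{lemma:waldspurger-functional-formula}, it suffices to treat $\varphi=\varphi_\delta$. First, $\varphi_\delta\circ\tau_\delta(s_0)$ lies again in $\Hom_T(\tau_\delta,\mathbf{1})$. Indeed, $s_0$ normalizes $T$, and for $t=\diag(a,d)$ we have $\varphi_\delta\circ\tau_\delta(s_0)\circ\tau_\delta(t)=\varphi_\delta\circ\tau_\delta(s_0ts_0^{-1})\circ\tau_\delta(s_0)$, with $s_0ts_0^{-1}=\diag(d,a)\in T$. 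Hence $\varphi_\delta\circ\tau_\delta(s_0)=c\,\varphi_\delta$ for some scalar $c$. It remains to identify $c=\delta(-1)$, and I will do this by a direct computation on the integral.

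For $f\in W_\delta$, the left side is
\[
\varphi_\delta(\tau_\delta(s_0)f)=\int_{F^\times}f(\overline{u}(x)s_0)\,\delta(x)|x|^{1/2}\,d^\times x .
\]
Rewrite \eqref{eq:lower-unipotent-weyl-identity} with $x$ replaced by $x^{-1}$: since $s_0^2=1$, it gives
\[
\overline{u}(x)s_0=\begin{bmatrix}-x&1\\&x^{-1}\end{bmatrix}^{-1}\overline{u}(x^{-1}).
\]
The inverse is upper triangular with diagonal $(-x^{-1},x)$. The inducing character of $\tau_\delta=I_{\delta,\delta^{-1}}$ together with $\delta_B^{1/2}$ evaluates on $\diag(a,d)$ as $\delta(a)\delta(d)^{-1}|a/d|^{1/2}$, and at $(a,d)=(-x^{-1},x)$ this equals $\delta(-1)\delta(x)^{-2}|x|^{-1}$. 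This is the same factor as in the convergence argument in the proof of Lemma \ref{lemma:waldspurger-functional-formula}. Therefore
\[
f(\overline{u}(x)s_0)=\delta(-1)\delta(x)^{-2}|x|^{-1}f(\overline{u}(x^{-1})).
\]

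Substituting this into the integral, the integrand becomes $\delta(-1)f(\overline{u}(x^{-1}))\delta(x)^{-1}|x|^{-1/2}$. Now change variables $x\mapsto x^{-1}$, which preserves $d^\times x$. This turns the integrand into $\delta(-1)f(\overline{u}(x))\delta(x)|x|^{1/2}$, so $\varphi_\delta(\tau_\delta(s_0)f)=\delta(-1)\varphi_\delta(f)$. The inversion is legitimate because both integrals converge absolutely, by the bounds established in the proof of Lemma \ref{lemma:waldspurger-functional-formula}.

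The only step that needs care is the sign and modulus bookkeeping in the Bruhat-type identity. An error there would replace $\delta(-1)$ by $1$ or introduce a spurious power of $|x|$. I would double-check it by expanding the matrix product, and by noting that the $|x|$-powers must cancel for the $T$-invariance to be consistent.
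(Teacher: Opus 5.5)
Your proposal is correct and follows essentially the same route as the paper: reduce to $\varphi_\delta$ by Lemma~\ref{lemma:waldspurger-functional-formula}, use the identity \eqref{eq:lower-unipotent-weyl-identity} with $x\mapsto x^{-1}$ to get $f(\overline{u}(x)s_0)=\delta(-1)\delta(x)^{-2}|x|^{-1}f(\overline{u}(x^{-1}))$, and then substitute $y=x^{-1}$. Your preliminary check that $\varphi_\delta\circ\tau_\delta(s_0)$ is again $T$-invariant is harmless but unnecessary, since the direct computation already produces the scalar $\delta(-1)$.
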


\begin{proof}
By the previous lemma, it suffices to verify this for $\varphi = \varphi_\delta$. Replacing $x$ by $x^{-1}$ in \eqref{eq:lower-unipotent-weyl-identity} and using $\delta(-1)^{-1}=\delta(-1)$ gives instead
\[
f(\overline u(x)s_0)
=\delta(-1)\delta(x)^{-2}|x|^{-1}f(\overline u(x^{-1})).
\]
Consequently, by the substitution $y=x^{-1}$ we get
\begin{align*}
(\varphi_\delta\circ\tau_\delta(s_0))(f)
& =\int_{F^\times}f(\overline u(x)s_0)\delta(x)|x|^{1/2}\,d^\times x\\
& =\delta(-1)\int_{F^\times}f(\overline u(x^{-1}))
\delta(x)^{-1}|x|^{-1/2}\,d^\times x\\
& =\delta(-1)\int_{F^\times}f(\overline u(y))
\delta(y)|y|^{1/2}\,d^\times y\\
& =\delta(-1)\varphi_\delta(f).
\end{align*}
\end{proof}

\begin{corollary}\label{corollary:twisted-torus-weyl-scalar}
    For $\ell\in \Hom_T(W_\gamma, \xi\boxtimes \xi)$, we have 
    \begin{equation}
        \label{eq:twisted-torus-weyl-scalar} \ell\circ \tau_\gamma(s_0)=\gamma(-1)\ell.
    \end{equation}
\end{corollary}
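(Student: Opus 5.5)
The plan is to reduce the claim to Corollary~\ref{corollary:torus-weyl-scalar} by a determinant twist, which turns the $(\xi\boxtimes\xi)$-equivariant functionals on $W_\gamma$ into $T$-invariant functionals on another principal series of the form $\tau_\delta$.

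Choose a unitary character $\rho:F^\times\longrightarrow\CC^1$ with $\rho^2=\xi$. Such a $\rho$ need not exist a priori, so we would rather twist by $\xi$ itself and track what happens. For $t=\diag(a,d)$ we have $(\xi\boxtimes\xi)(t)=\xi(ad)=\xi(\det t)$. Given $\ell\in\Hom_T(W_\gamma,\xi\boxtimes\xi)$, set $\ell':=\ell\circ D_{\xi}$, where $D_\xi:V_{\gamma\xi,\gamma^{-1}\xi}\longrightarrow V_{\gamma,\gamma^{-1}}=W_\gamma$ is the determinant twist map \eqref{eq:determinant-twist}, using $\xi^2=\mathbf{1}$. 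By \eqref{eq:determinant-twist-covariance}, $\tau_\gamma(t)\circ D_\xi=\xi(\det t)D_\xi\circ I_{\gamma\xi,\gamma^{-1}\xi}(t)$. Hence $\ell'$ is $T$-invariant on $I_{\gamma\xi,\gamma^{-1}\xi}=I_{\gamma\xi,(\gamma\xi)^{-1}}$ (since $\xi=\xi^{-1}$), which is exactly $\tau_\delta$ with $\delta:=\gamma\xi$. This $\delta$ is unitary.

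Next we compare the actions of $s_0$. Since $\det s_0=-1$, covariance gives $\tau_\gamma(s_0)\circ D_\xi=\xi(-1)\,D_\xi\circ\tau_\delta(s_0)$. Therefore
\[
\ell\circ\tau_\gamma(s_0)\circ D_\xi=\xi(-1)\,\ell'\circ\tau_\delta(s_0)=\xi(-1)\delta(-1)\,\ell'=\xi(-1)^2\gamma(-1)\,\ell\circ D_\xi ,
\]
where the middle equality is Corollary~\ref{corollary:torus-weyl-scalar} applied to $\ell'\in\Hom_T(\tau_\delta,\mathbf{1})$. Because $D_\xi$ is a bijection and $\xi(-1)^2=1$, this gives $\ell\circ\tau_\gamma(s_0)=\gamma(-1)\ell$, which is the claimed identity.

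The only points needing care are bookkeeping. First, one must check that $D_\xi$ indeed maps $V_{\gamma\xi,\gamma^{-1}\xi}$ into $V_{\gamma,\gamma^{-1}}$, which holds because $\xi\cdot\xi=\mathbf{1}$ on both torus entries. Second, the twisted character must land in the family $\tau_\delta$ with the same normalization used in Lemma~\ref{lemma:waldspurger-functional-formula}; this holds because $(\gamma\xi)^{-1}=\gamma^{-1}\xi$. There is no genuine obstacle here: the sign $\xi(-1)$ from $\det s_0$ and the sign $\xi(-1)$ hidden in $\delta(-1)$ cancel.
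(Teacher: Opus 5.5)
Your argument is correct and is essentially the paper's proof. Both twist by $D_\xi$ to reduce to Corollary~\ref{corollary:torus-weyl-scalar} for $\delta=\xi\gamma$, and in both the sign $\xi(\det s_0)=\xi(-1)$ cancels the $\xi(-1)$ hidden in $\delta(-1)$. The only difference is the direction of transport: the paper pushes the specific functional $\varphi_{\xi\gamma}$ forward to $\ell_0=\varphi_{\xi\gamma}\circ D_\xi$ and then invokes $\dim\Hom_T(W_\gamma,\xi\boxtimes\xi)=1$, whereas you pull an arbitrary $\ell$ back into $\Hom_T(\tau_\delta,\mathbf{1})$, which avoids that separate appeal. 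The opening remark about a square root $\rho$ is a false start and can simply be deleted.
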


\begin{proof}
    We apply Lemma \ref{lemma:waldspurger-functional-formula} to $\delta=\xi \gamma$, which gives $0\neq \varphi_{\xi\gamma}\in \Hom_T(\tau_{\xi\gamma},\mathbf{1})$. Then we take the composite 
    \[\ell_0: W_\gamma\overset{D_\xi}{\longrightarrow} W_{\xi\gamma}\overset{\varphi_{\xi\gamma}}{\longrightarrow}\CC,\]
    where the determinant twist $D_\xi$ was introduced in \eqref{eq:determinant-twist}.
    Then \eqref{eq:determinant-twist-covariance} becomes
\begin{equation}\label{eq:concrete-det-twist}D_\xi(\tau_\gamma(g)f) = \xi(\det g)\cdot \tau_{\xi\gamma}(g)(D_\xi f),\qquad g\in G,~f\in W_\gamma
 \end{equation}
and the previous Corollary gives 
\begin{equation}\label{eq:twisted-torus-functional-weyl-action}
    \varphi_{\xi\gamma}\circ \tau_{\xi\gamma}(s_0) = \xi(-1)\gamma(-1)\varphi_{\xi\gamma}.
\end{equation}
Combined with the $T$-invariance of $\varphi_{\xi\gamma}$ and $\xi(\det t) = (\xi\boxtimes\xi)(t)$ for $t\in T$, we get 
\[\ell_0(\tau_\gamma(t)f)
=(\xi\boxtimes\xi)(t)\cdot \ell_0(f),
\qquad t\in T,\]
i.e. $\ell_0\in \Hom_T(\tau_{\gamma},\xi\boxtimes \xi)$. It suffices to prove the statement for this $\ell_0$. Taking $g = s_0$ in \eqref{eq:concrete-det-twist} we get 
\[
D_\xi\circ \tau_\gamma(s_0)=\xi(\det s_0)\cdot \tau_{\xi\gamma}(s_0)\circ D_\xi=\xi(-1)\tau_{\xi\gamma}(s_0)\circ D_\xi
.
\]
Together with \eqref{eq:twisted-torus-functional-weyl-action}, this yields
\begin{align*}
     \ell_0\circ\tau_\gamma(s_0) & = \varphi_{\xi\gamma}\circ D_\xi \circ \tau_\gamma(s_0) = \xi(-1)\varphi_{\xi\gamma}\circ \tau_{\xi\gamma}(s_0)\circ D_\xi\\ 
    & =\gamma(-1)\cdot (\varphi_{\xi\gamma}\circ D_\xi) = \gamma(-1)\cdot\ell_0.
\end{align*}
\end{proof}

Combining the above Corollary with \eqref{eq:trilinear-torus-reduction} established in Proposition \ref{proposition:reduction-to-waldspurger-functional}, we have proved Theorem \ref{theorem:type-i-open-orbit-flip}.

\subsection{Proof of the order 2, type I case}\label{subsec:type-i-proof}
We set $\beta:=\xi\alpha$ and use the notations
\[
D_\xi:W_{\beta}\longrightarrow W_{\alpha},
\qquad
X_{\xi,\beta}:=V_\xi\otimes V_\xi\otimes W_{\beta}.
\]
We define two $G^{\Delta}$-isomorphisms $T_1,T_2:X_{\xi,\beta}\longrightarrow X_{\xi,\alpha}$ by
\[
T_1:=A_\xi^\psi\otimes\operatorname{id}_{V_\xi}\otimes D_\xi,
\qquad
T_2:=\operatorname{id}_{V_\xi}\otimes A_\xi^\psi\otimes D_\xi.
\]
where $A_\xi^\psi$ is defined in \eqref{eq:definition-active-relation-involution} and $G^{\Delta}$-equivariance follows from \eqref{eq:active-relation-covariance}. Both paths of the diagram 
\begin{equation}\label{eq:type-i-flip-transition-diagram}
\begin{tikzcd}[column sep=huge,row sep=large]
X_{\xi,\beta}
\arrow[r,"{P_{12}}"]
\arrow[d,"{T_1}"']
& X_{\xi,\beta}
\arrow[d,"{T_2}"]\\
X_{\xi,\alpha}
\arrow[r,"{P_{12}}"']
& X_{\xi,\alpha}
\end{tikzcd}
\end{equation}
send $v_1\otimes v_2\otimes v_3$ to
$v_2\otimes A_\xi^\psi v_1\otimes D_\xi v_3$; hence the diagram commutes.

\begin{proposition}\label{proposition:type-i-moving-relation-involution}
For $L_{\alpha}^{\#}\in \Hom_{G^\Delta}(X_{\xi,\alpha},\mathbf{1})$, we have
\begin{equation}\label{eq:type-i-moving-identity}
L_\alpha^{\#}\circ A_1
=\xi(-1)L_\alpha^{\#}\circ A_2.
\end{equation}
Equivalently, for $v_1,v_2\in V_\xi$ and $v_3\in W_{\alpha}$,
\begin{equation}\label{eq:type-i-moving-identity-on-vectors}
L_\alpha^{\#}(A_\xi^\psi v_1\otimes v_2\otimes v_3)
=\xi(-1)L_\alpha^{\#}(v_1\otimes A_\xi^\psi v_2\otimes v_3).
\end{equation}
\end{proposition}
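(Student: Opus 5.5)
Consider the commutative square \eqref{eq:type-i-flip-transition-diagram}, $P_{12}\circ T_1=T_2\circ P_{12}$ as maps $X_{\xi,\beta}\to X_{\xi,\alpha}$. We pull back $L_\alpha^{\#}$ along both paths and compare the results using Theorem~\ref{theorem:type-i-open-orbit-flip}, applied once with $\gamma=\alpha$ and once with $\gamma=\beta=\xi\alpha$. We may assume $L_\alpha^{\#}\neq0$, since otherwise both sides of \eqref{eq:type-i-moving-identity} vanish.

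First, $T_1$ and $T_2$ are $G^\Delta$-isomorphisms. On the $G^\Delta$-diagonal, the factor $\xi(\det g)$ coming from $A_\xi^\psi$ (by \eqref{eq:active-relation-covariance}) cancels the factor $\xi(\det g)$ coming from $D_\xi$ (by \eqref{eq:determinant-twist-covariance}). Hence $L_\alpha^{\#}\circ T_1$ and $L_\alpha^{\#}\circ T_2$ are nonzero elements of $\Hom_{G^\Delta}(X_{\xi,\beta},\mathbf{1})$.

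Theorem~\ref{theorem:type-i-open-orbit-flip} for $\gamma=\alpha$ gives $L_\alpha^{\#}\circ P_{12}=\alpha(-1)L_\alpha^{\#}$. The same theorem for $\gamma=\beta$, applied to $L_\alpha^{\#}\circ T_2$, gives $L_\alpha^{\#}\circ T_2\circ P_{12}=\beta(-1)\,L_\alpha^{\#}\circ T_2$. Combining these with the commutative square,
\[
\alpha(-1)\,L_\alpha^{\#}\circ T_1=L_\alpha^{\#}\circ P_{12}\circ T_1=L_\alpha^{\#}\circ T_2\circ P_{12}=\beta(-1)\,L_\alpha^{\#}\circ T_2 .
\]
Since $\beta(-1)=\xi(-1)\alpha(-1)$ and $\alpha(-1)=\pm1$, this simplifies to $L_\alpha^{\#}\circ T_1=\xi(-1)\,L_\alpha^{\#}\circ T_2$.

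Next we remove the common factor $D_\xi$ on the third tensor slot. We have $T_i=A_i\circ(\operatorname{id}\otimes\operatorname{id}\otimes D_\xi)$ for $i=1,2$, and $\operatorname{id}\otimes\operatorname{id}\otimes D_\xi\colon X_{\xi,\beta}\to X_{\xi,\alpha}$ is a linear bijection. Precomposing with its inverse gives $L_\alpha^{\#}\circ A_1=\xi(-1)L_\alpha^{\#}\circ A_2$, which is \eqref{eq:type-i-moving-identity}. The vector form \eqref{eq:type-i-moving-identity-on-vectors} is then immediate from the definitions of $A_1$ and $A_2$.

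The main point requiring care is checking that the hypotheses of Theorem~\ref{theorem:type-i-open-orbit-flip} hold for $\gamma=\beta$. This needs $\beta$ unitary, which holds, and the central-character conditions implicit in the Mackey argument. The triple $X_{\xi,\beta}$ has trivial central character on $Z^\Delta$: $W_\beta$ has trivial central character, and $\xi^2=\mathbf{1}$. The theorem was stated for arbitrary unitary $\gamma$, so no further input is needed.
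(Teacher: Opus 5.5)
Your proof is correct and takes essentially the same route as the paper. You pull back $L_\alpha^{\#}$ along the square $T_2\circ P_{12}=P_{12}\circ T_1$ and apply Theorem~\ref{theorem:type-i-open-orbit-flip} with $\gamma=\alpha$ and with $\gamma=\beta=\xi\alpha$, which gives $L_\alpha^{\#}\circ T_1=\xi(-1)L_\alpha^{\#}\circ T_2$, and then you strip off the $D_\xi$ on the third factor; the paper does this last step by evaluating on $v_1\otimes v_2\otimes D_\xi v_3$ and using $D_\xi^2=\operatorname{id}$, which is equivalent to your precomposition with the inverse of $\operatorname{id}\otimes\operatorname{id}\otimes D_\xi$.
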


\begin{proof}
Suppose $L_{\alpha}^{\#}\neq 0$, and set
\[
L_\beta^{\#}:=L_\alpha^{\#}\circ T_2\in \Hom_{G^\Delta}(X_{\xi,\beta},\CC),
\]
which is nonzero since $T_2$ is a $G^\Delta$-isomorphism. Applying Theorem \ref{theorem:type-i-open-orbit-flip} to $L_\alpha^{\#}$ and $L_\beta^{\#}$, we get
\[
L_\alpha^{\#}\circ P_{12}=\alpha(-1)L_\alpha^{\#},
\qquad
L_\beta^{\#}\circ P_{12}=\beta(-1)L_\beta^{\#}.
\]
Using the above commutative diagram \eqref{eq:type-i-flip-transition-diagram}, i.e. $T_2\circ P_{12}= P_{12}\circ T_1$, we obtain
\[
\begin{aligned}
\beta(-1)L_\alpha^{\#}\circ T_2
&=\beta(-1)L_\beta^{\#}
=L_\beta^{\#}P_{12}
=L_\alpha^{\#}\circ T_2\circ P_{12}\\
&=L_\alpha^{\#}\circ P_{12} \circ T_1
=\alpha(-1)L_\alpha^{\#}\circ T_1.
\end{aligned}
\]
Since $\beta=\xi\alpha$, this is just
\begin{equation}\label{eq:type-i-comparison-with-spectator}
L_\alpha^{\#}\circ T_1=\xi(-1)L_\alpha^{\#}\circ T_2.
\end{equation}
Let $v_3\in W_\alpha$ and put
$w_3:=D_\xi v_3\in W_\beta$. Evaluating
\eqref{eq:type-i-comparison-with-spectator} on
$v_1\otimes v_2\otimes w_3$ gives
\[
L_\alpha^{\#}(A_\xi^\psi v_1\otimes v_2\otimes v_3)
=\xi(-1)L_\alpha^{\#}(v_1\otimes A_\xi^\psi v_2\otimes v_3),
\]
because $D_\xi w_3=D_\xi^2v_3=v_3$. This proves
\eqref{eq:type-i-moving-identity-on-vectors}, and hence
\eqref{eq:type-i-moving-identity}.
\end{proof}

Since $A_2^2=\operatorname{id}$ by \eqref{eq:active-relation-involution}, we obtain
\begin{equation}\label{eq:type-i-fixed-model-period-eigenvalue}
L_\alpha^{\#}\circ A_1A_2
=\xi(-1)L_\alpha^{\#}.
\end{equation}

For our fixed $0\neq L\in\Hom_{G^\Delta}(I(\bm\chi),\mathbf{1})$, we set
\[
L_\alpha^{\#}:=L\circ\operatorname{res}_{\bm\chi^{\#}}
\in\Hom_{G^\Delta}(X_{\xi,\alpha},\mathbf{1}),
\]
which is nonzero because
$\operatorname{res}_{\bm\chi^{\#}}$ is an isomorphism.  Now Lemma
\ref{lemma:type-i-fixed-tensor-model} gives
\[
(L\circ R(r_{12},\bm{\chi}))\circ\operatorname{res}_{\bm{\chi}^{\#}}
=L\circ\operatorname{res}_{\bm{\chi}^{\#}}\circ A_1A_2=L_\alpha^{\#}\circ A_1A_2
=\xi(-1)L_\alpha^{\#}.
\]
Since $\operatorname{res}_{\bm{\chi}^{\#}}$ is an isomorphism, this proves \eqref{eq:type-i-period-eigenvalue}, that is
\[L\circ R(r_{12},\bm{\chi})=\xi(-1)L.\]
We put
\[
s:=\xi(-1)\in\{\pm1\}.
\]
With the Whittaker normalization,
\[
\bm{\sigma}_+=\ker\bigl(R(r_{12},\bm{\chi})-1\bigr),
\qquad
\bm{\sigma}_-=\ker\bigl(R(r_{12},\bm{\chi})+1\bigr).
\]
If $v\in\bm{\sigma}_t$ with $t\in\{\pm1\}$, then
\[
tL(v)
=L\bigl(R(r_{12},\bm{\chi})v\bigr)
=sL(v).
\]
Thus $L$ vanishes on $\bm{\sigma}_{-s}$, and since $L\neq0$, it is nonzero on $\bm{\sigma}_s$. Consequently, we get
\[
\dim\Hom_{G^\Delta}(\bm{\sigma}_s,\mathbf{1})=1,
\qquad
\dim\Hom_{G^\Delta}(\bm{\sigma}_{-s},\mathbf{1})=0,
\]
which proves the order $2$, type I case.
% ===== End 04_type 1.tex =====

\section{Proof of the order 2, type II case}\label{sec:type-ii}
% ===== Begin 05_type 2.tex =====
In this section we match the computation and the context of Venkatesh-Wang \cite[Lemma 11.4.2]{venkatesh-wang} with our setting, and explain how a proof of the order 2, type II case follows. Let $\mu,\nu:F^\times \longrightarrow \CC^\times$ be two characters. Let us recall that the ``usual'' induced model of $I_B^G(\chi)=I_B^G(\mu\boxtimes\nu)$ is denoted by $(I_{\mu, \nu},V_{\mu,\nu})$, which consists of smooth functions $f: G\longrightarrow \CC$ such that 
\[
f\left(\begin{bmatrix}
    a& b\\ & d
\end{bmatrix}g\right)= \mu(a)\nu(d) \left|\frac{a}{d}\right|^{1/2}f(g),\qquad g\in G,~a,d\in F^\times, ~b\in F,
\]
with the right regular $G$-action 
\[
\left(I_{\mu,\nu}(g)f\right)(x):=f(xg),\qquad x,g\in G.
\]

Now let $\cH_{\mu,\nu}$ be the space of smooth functions $\varphi:F^2\setminus\{0\}\longrightarrow\CC$ such that
\[
\varphi(tz)=|t|^{-1}(\mu^{-1}\nu)(t)\varphi(z),\qquad t\in F^\times,~z\in F^2\setminus\{0\}.
\]
We regard $F^2$ as the set of row vectors, and let $G$ act on $\cH_{\mu,\nu}$ by
\[
\left(\pi_{\mu,\nu}(g)\varphi\right)(z):=\mu(\det g)|\det g|^{1/2}\varphi(zg),\qquad \varphi\in\cH_{\mu,\nu},~z\in F^2\setminus\{0\},~g\in G.
\]
Then we have a $G$-isomorphism
\begin{equation}\label{eq:homogeneous-model-isomorphism}
\iota_{\mu,\nu}:\cH_{\mu,\nu}\overset{\sim}{\longrightarrow}V_{\mu,\nu},\qquad
\varphi\longmapsto f_\varphi(g):=\mu(\det g)|\det g|^{1/2}\varphi(e_2g),
\end{equation}
where $e_2=(0,1)\in F^2$. Thus $(\pi_{\mu,\nu},\cH_{\mu,\nu})$ gives another model of $(I_{\mu,\nu},V_{\mu,\nu})$, which we shall call the ``homogeneous model''. A computational advantage of this homogeneous model is that we can pull it back to the Schwartz space $\cS(F^2)$. More precisely, we put $\xi: = \mu \nu^{-1}$, then we have a linear surjection
\[
q_{\mu,\nu}:\cS(F^2)\longrightarrow\cH_{\mu,\nu}
\]
defined by
\[
q_{\mu,\nu}(\Phi)(z):=\int_{F^\times}\Phi(tz)\xi(t)|t|\,d^\times t.
\]
where we recall $d^\times t=dt/|t|$. Note that if $\xi=\xi_0|\cdot|^s$ with $\xi_0$ unitary, this integral converges absolutely on $\operatorname{Re}(s)>-1$ (in particular for the unitary characters used below) and admits meromorphic continuation, by computation in Tate's thesis.

Let $dw=dw_1\,dw_2$ be the product measure on $F^2$, where each $dw_i$ is the self-dual Haar measure on $F$ with respect to our fixed character $\psi:F\longrightarrow\CC^1$. The ``symplectic Fourier transform'' in \cite{venkatesh-wang} is defined by
\[
\cF_\psi(\Phi)(z):=\int_{F^2}\Phi(w)\psi(z\wedge w)dw,\qquad \Phi\in\cS(F^2),~z\in F^2,
\]
where
\[
z\wedge w:=z_1w_2-z_2w_1,\qquad z=(z_1,z_2),~w=(w_1,w_2),
\]
is the standard symplectic form on $F^2$. With this convention, the Fourier inversion formula reads
\[
\cF_\psi\circ\cF_\psi=\mathrm{id}.
\]

The Fourier transform $\cF_\psi$ gives a ``lift" of the standard intertwining operator. To clarify this, we first give a formula for the (unnormalized) intertwining operator on homogeneous models. We say a smooth function $\rho:F^2\setminus\{0\}\longrightarrow\CC$ is $(-2)$-homogeneous if
\begin{equation}\label{eq:degree-minus-two-homogeneity}
\rho(tz)=|t|^{-2}\rho(z),\qquad t\in F^\times.
\end{equation}
For such a $(-2)$-homogeneous (smooth) function $\rho$, we set
\[
\int_{\PP^1(F)}\rho(z)d^\PP z:=\int_F\rho(1,x)dx.
\]
The integral converges, since $\rho$ is bounded on the two compact sets $\{(1,x)\mid |x|\leq1\}$ and $\{(x,1)\mid |x|\leq1\}$, while homogeneity gives $\rho(1,x)=|x|^{-2}\rho(x^{-1},1)$ for $|x|>1$. For such $\rho$ we have
\[
\int_{\PP^1(F)}\rho(zg)d^\PP z
=|\det g|^{-1}\int_{\PP^1(F)}\rho(z)d^\PP z,\qquad g\in G.
\]
For any $z\in F^2\setminus\{0\}$ and $\varphi\in\cH_{\mu,\nu}$, the smooth function
\[
z'\longmapsto
\frac{\xi(z'\wedge z)}{|z'\wedge z|}\varphi(z')
\]
satisfies the condition \eqref{eq:degree-minus-two-homogeneity} but is only defined on $F^2\backslash Fz$. Suppose $\xi = \xi^{\circ}|\cdot|^s$ for a unitary $\xi^{\circ}$ and $s\in \CC$, then a direct computation (of zeta integral) shows that
\begin{equation} \label{eq:homogeneous-intertwining-integral}
(M_{\mu,\nu}\varphi)(z)=\int_{\PP^1(F)}
\frac{\xi(z'\wedge z)}{|z'\wedge z|}
\varphi(z')d^\PP z'
\end{equation}
converges absolutely for all $\varphi \in \cH_{\mu,\nu}$ if and only if $\Re(s)>0$, and in this case gives a $G$-equivariant map
\[
M_{\mu,\nu}:\cH_{\mu,\nu}\longrightarrow\cH_{\nu,\mu}.
\]
Then it admits meromorphic continuation to general $\xi$. 
\begin{remark}
    Here the continuation process is the same as that for standard intertwining operators. For given $\mu,\nu$ and $\xi = \mu\nu^{-1}$, we put 
    \[ \mu_t:=\mu|\cdot|^{t/2},\qquad \nu_t:=\nu|\cdot|^{-t/2},\qquad \xi_t :=\mu_t\nu_t^{-1} = \xi |\cdot|^{t}\]
    for $t\in \CC$, then the above integral \eqref{eq:homogeneous-intertwining-integral} converges for $\Re(s+t)>0$. Then we choose a common compact model for different $\cH_{\mu_t,\nu_t}$ (or $\cH_{\nu_t,\mu_t}$), which can be the restriction of functions in $\cH_{\mu_t,\nu_t}$ to
    \[ S := \{(z_1,z_2)\in \cO_F^2\mid \max(|z_1|,|z_2|) = 1\}\subset F^2\backslash \{0\}.\]
    This model depends only on $\xi$ and not on $t$, so that the continuation of \eqref{eq:homogeneous-intertwining-integral} makes sense. 

    The point is that it suffices to carry out computations in the convergence range, and results at other regular points of characters will follow from continuation process. 
\end{remark}

Recall our notations
\[
\dot w_0=\begin{bmatrix}0&1\\-1&0\end{bmatrix},
\qquad
\dot w_0^{-1}=\begin{bmatrix}0&-1\\1&0\end{bmatrix},
\qquad
u(x):=\begin{bmatrix}1&x\\0&1\end{bmatrix},\qquad x\in F. 
\]
We denote by
\[
J_{\mu,\nu}:V_{\mu,\nu}\longrightarrow V_{\nu,\mu}
\]
the standard unnormalized intertwining operator, defined by the intertwining integral
\[
\left(J_{\mu,\nu}f\right)(g):=\int_F f(\dot w_0^{-1}u(x)g)dx
\]
in the sense of meromorphic continuation. The following is then a $\GL_2$-version of Lemma 11.3.1 of \cite[\S 11]{venkatesh-wang}, where they carry out the computation in the case $\mu\nu = \mathbf{1}$ (i.e. when the central character of $\pi_{\mu,\nu}$ is trivial and it descends to a representation of $\PGL_2(F)$).

\begin{lemma}\label{lemma:rank-one-fourier-intertwining}
As an identity of meromorphic families, we have the following commutative diagram, which specializes at every parameter where all the displayed maps are regular:
\[
\begin{tikzcd}
\cS(F^2) \arrow[d,twoheadrightarrow,"q_{\mu,\nu}"'] \arrow[rrr,"\cF_\psi"] &&&
\cS(F^2) \arrow[d,twoheadrightarrow,"q_{\nu,\mu}"]\\
\cH_{\mu,\nu}
\arrow[d,"\iota_{\mu,\nu}"',"\sim"]
\arrow[rrr,"{\gamma(1,\xi^{-1},\psi)^{-1}M_{\mu,\nu}}"']
&&&
\cH_{\nu,\mu}
\arrow[d,"\iota_{\nu,\mu}","\sim"']\\
V_{\mu,\nu}
\arrow[rrr,"{\gamma(1,\xi^{-1},\psi)^{-1}J_{\mu,\nu}}"']
&&&
V_{\nu,\mu}.
\end{tikzcd}
\]

\end{lemma}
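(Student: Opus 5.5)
The plan is to verify the two squares separately. The bottom square is an exact identity with no scalar involved, and the top square reduces to Tate's local functional equation. Since both rows are meromorphic in the twisting parameter $t$ (in the notation of the Remark above, with a common compact model on $S$), it suffices to prove commutativity for $\xi=\xi^\circ|\cdot|^{s}$ in a nonempty open strip of $\Re(s)$. The identity then extends by meromorphic continuation to every parameter where the displayed maps are regular.

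\emph{Bottom square.} Since $\iota_{\mu,\nu}$ and $\iota_{\nu,\mu}$ differ from $M_{\mu,\nu}$ and $J_{\mu,\nu}$ only by the scalar $\gamma(1,\xi^{-1},\psi)^{-1}$ on both rows, it suffices to show $\iota_{\nu,\mu}\circ M_{\mu,\nu}=J_{\mu,\nu}\circ\iota_{\mu,\nu}$ for $\Re(s)>0$.

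\begin{itemize}
\item On the $J$ side: we have $\det(\dot w_0^{-1}u(x))=1$ and $e_2\dot w_0^{-1}u(x)=(1,x)$. Hence
\[
(J_{\mu,\nu}f_\varphi)(g)=\mu(\det g)|\det g|^{1/2}\int_F\varphi((1,x)g)\,dx .
\]
\item On the $M$ side: $\iota_{\nu,\mu}(M_{\mu,\nu}\varphi)(g)=\nu(\det g)|\det g|^{1/2}(M_{\mu,\nu}\varphi)(e_2g)$. Substitute $z'=z''g$ in the $\PP^1$-integral, using the rule $\int\rho=|\det g|\int\rho(z''g)\,d^{\PP}z''$ and the identity $z''g\wedge e_2g=\det g\,(z''\wedge e_2)$. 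This produces a factor $\xi(\det g)|\det g|^{-1}\cdot|\det g|$.
\item Parametrizing $z''=(1,x)$ gives $z''\wedge e_2=1$, so the $M$ side also equals $\nu\xi(\det g)|\det g|^{1/2}\int_F\varphi((1,x)g)\,dx$, which is the $J$ side since $\nu\xi=\mu$.
\end{itemize}

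\emph{Top square.} I will compute both composites at a point $z\neq 0$ as integrals over $F^2$.

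\begin{itemize}
\item Unfold the $\PP^1$-integral against the $F^\times$-integral defining $q_{\mu,\nu}$. Write $w=tz'$ with $z'=(1,x)$; the Jacobian is $dw=|t|\,dt\,dx=|t|^2d^\times t\,dx$. The homogeneity of the kernel then gives
\[
M_{\mu,\nu}q_{\mu,\nu}(\Phi)(z)=\int_{F^2}\Phi(w)\,\frac{\xi(w\wedge z)}{|w\wedge z|}\,dw .
\]
This is absolutely convergent for $0<\Re(s)$ small, since $|w\wedge z|^{\Re(s)-1}$ is locally integrable in the direction transverse to the line $Fz$.
\item On the other side,
\[
q_{\nu,\mu}(\cF_\psi\Phi)(z)=\int_{F^\times}\int_{F^2}\Phi(w)\psi(r\,z\wedge w)\,dw\;\xi^{-1}(r)|r|\,d^\times r .
\]
I will formally swap the integrals. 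The inner $r$-integral is the Fourier transform of the homogeneous distribution $\xi^{-1}|\cdot|$ evaluated at $a=z\wedge w$. By Tate's local functional equation this equals $\gamma(1,\xi^{-1},\psi)\,\xi(a)|a|^{-1}$, up to the sign $\xi(-1)$ coming from $z\wedge w=-(w\wedge z)$. This sign must be absorbed by the precise convention for $\gamma$, and I must check it carefully against the definition of $\cF_\psi$.
\item Comparing with the first bullet gives $q_{\nu,\mu}\circ\cF_\psi=\gamma(1,\xi^{-1},\psi)^{-1}M_{\mu,\nu}\circ q_{\mu,\nu}$ up to this bookkeeping of $\gamma$ versus $\gamma^{-1}$. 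I will fix that bookkeeping by testing both sides on an explicit $\Phi=\mathbf 1_{\cO_F^2}$ with unramified data.
\end{itemize}

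\emph{Main obstacle.} The hard step is justifying the interchange, since the $r$-integral is not absolutely convergent. I would first treat $\Phi$ supported in a small neighbourhood of a point off the line $Fz$, so that $a=z\wedge w$ ranges over a compact subset of $F^\times$. There the identity is the standard distributional Tate computation, valid for $0<\Re(1-s)<1$, a strip that overlaps the convergence range $\Re(s)>0$ of $M_{\mu,\nu}$. For a general $\Phi$, I would evaluate at $z=e_2$ (enough by $\SL_2$-equivariance of all maps) and cut the $r$-integral off with $\mathbf 1_{|r|\le q^N}$, letting $N\to\infty$. The truncated inner integrals are then Schwartz-type in $w_1$, and dominated convergence, with majorant $|w_1|^{\Re(s)-1}$ near $w_1=0$, finishes the argument. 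The final step is meromorphic continuation as described in the Remark.
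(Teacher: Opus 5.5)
Your route is the paper's route. The bottom square is the same direct computation, via the substitution $z'=z''g$ and $(z''g)\wedge(e_2g)=\det(g)\,(z''\wedge e_2)$. For the top square, your unfolding $M_{\mu,\nu}q_{\mu,\nu}(\Phi)(z)=\int_{F^2}\Phi(w)\,\xi(w\wedge z)|w\wedge z|^{-1}\,dw$ is correct, as is the appeal to Tate's functional equation in the strip $0<\Re(s)<1$. Your extra care about the interchange is harmless.

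\textbf{The gap.} The gap is in the one step that carries the actual content of the lemma, the constant. As written, you claim the inner $r$-integral equals $\gamma(1,\xi^{-1},\psi)\,\xi(a)|a|^{-1}$, which has $\gamma$ where $\gamma^{-1}$ is needed. You then leave a sign $\xi(-1)$ to be ``absorbed by the precise convention for $\gamma$''. Your proposed repair, testing on $\mathbf 1_{\cO_F^2}$ with unramified data, cannot settle this. For unramified $\xi$ one has $\xi(-1)=1$, so the test is blind to exactly the sign in question, and it says nothing about ramified $\xi$.

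\textbf{The correct bookkeeping.} Use Tate's convention $Z(\widehat f,\chi^{-1},1-s)=\gamma(s,\chi,\psi)Z(f,\chi,s)$, where $\widehat f(y)=\int_F f(x)\psi(xy)\,dx$. Then, as distributions in $a$,
\[
\int_F\xi^{-1}(r)\psi(ra)\,dr=\gamma(0,\xi,\psi)\,\frac{\xi(a)}{|a|}=\xi(-1)\,\gamma(1,\xi^{-1},\psi)^{-1}\,\frac{\xi(a)}{|a|},
\]
using $\gamma(0,\xi,\psi)\gamma(1,\xi^{-1},\psi)=\xi(-1)$. Put $a=z\wedge w$. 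This $\xi(-1)$ cancels exactly against the one from $\xi(z\wedge w)=\xi(-1)\xi(w\wedge z)$, leaving precisely $\gamma(1,\xi^{-1},\psi)^{-1}$. The sign is therefore not a convention issue, and nothing remains to be absorbed. This is the paper's computation.

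\textbf{A simpler way to finish.} Your own reduction to $z=e_2$ gives a clean proof with no truncation. Set $\Phi_1(x):=\int_F\Phi(x,w_2)\,dw_2$. Then
\begin{itemize}
\item $\cF_\psi\Phi(re_2)=\widehat{\Phi_1}(-r)$;
\item $M_{\mu,\nu}q_{\mu,\nu}(\Phi)(e_2)=\int_F\Phi_1(x)\xi(x)|x|^{-1}\,dx$.
\end{itemize}
The top square at $e_2$ is then literally
\[
\xi(-1)\,Z(\widehat{\Phi_1},\xi^{\circ\,-1},1-s)=\xi(-1)\,\gamma(s,\xi^\circ,\psi)\,Z(\Phi_1,\xi^\circ,s).
\]
This is Tate's functional equation for the Schwartz function $\Phi_1$. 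Both zeta integrals converge absolutely for $0<\Re(s)<1$, and $\xi(-1)\gamma(s,\xi^\circ,\psi)=\gamma(1,\xi^{-1},\psi)^{-1}$.
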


\begin{proof}
Write $\xi=\xi^\circ|\cdot|^s$ with $\xi^\circ$ unitary. It suffices first to assume $0<\Re(s)<1$, where the defining integrals for the two sides converge absolutely; the one-dimensional Fourier identity used below is understood in the usual Tate sense. The general case follows from the uniqueness of continuation. For $\Phi\in\cS(F^2)$, we have
\[
q_{\nu,\mu}(\cF_\psi\Phi)(z)
=\int_{F^2}\Phi(w)\left(\int_F\xi^{-1}(\lambda)\psi\bigl(\lambda(z\wedge w)\bigr)d\lambda\right)dw.
\]
A direct computation (which is classical in Tate's thesis) gives
\[
\int_F\xi^{-1}(\lambda)\psi(\lambda x)d\lambda
=\xi(-1)\gamma(1,\xi^{-1},\psi)^{-1}\frac{\xi(x)}{|x|}
=\gamma(1,\xi^{-1},\psi)^{-1}\frac{\xi(-x)}{|x|}.
\]
Since $w\wedge z=-(z\wedge w)$, it follows that
\[
q_{\nu,\mu}(\cF_\psi\Phi)(z)
=\gamma(1,\xi^{-1},\psi)^{-1}
M_{\mu,\nu}\bigl(q_{\mu,\nu}(\Phi)\bigr)(z).
\]
This proves the commutativity of the upper square.

The comparison between $M_{\mu,\nu}$ and the standard intertwining operator $J_{\mu,\nu}$ is also a direct computation. Let $\varphi\in\cH_{\mu,\nu}$. Using the integration formula \eqref{eq:homogeneous-intertwining-integral} and 
\[
(z'g)\wedge(e_2g)=\det(g)(z'\wedge e_2),
\]
we obtain
\[
\begin{aligned}
(M_{\mu,\nu}\varphi)(e_2g)
&=
|\det g|
\int_{\PP^1(F)}
\frac{\xi((z'g)\wedge(e_2g))}
{|(z'g)\wedge(e_2g)|}
\varphi(z'g)d^\PP z'\\
&=
\xi(\det g)
\int_{\PP^1(F)}
\frac{\xi(z'\wedge e_2)}
{|z'\wedge e_2|}
\varphi(z'g)d^\PP z' \\ &=
\xi(\det g)\int_F\varphi((1,x)g)dx,
\end{aligned}
\]
where the last equality uses the affine chart $z'=(1,x)$, for which $z'\wedge e_2=1$. It follows that
\[
\begin{aligned}
\left(\iota_{\nu,\mu}(M_{\mu,\nu}\varphi)\right)(g)
&=
\nu(\det g)|\det g|^{1/2}
(M_{\mu,\nu}\varphi)(e_2g)\\
&=
\mu(\det g)|\det g|^{1/2}
\int_F\varphi((1,x)g)dx.
\end{aligned}
\]
On the other hand, since $\det(\dot w_0^{-1}u(x))=1$ and
\[
e_2\dot w_0^{-1}u(x)=(1,x),
\]
we have
\[
\begin{aligned}
\left(J_{\mu,\nu}f_\varphi\right)(g)
&=\int_F f_\varphi(\dot w_0^{-1}u(x)g)dx\\
&=\mu(\det g)|\det g|^{1/2}
\int_F\varphi(e_2\dot w_0^{-1}u(x)g)dx\\
&=
\mu(\det g)|\det g|^{1/2}
\int_F\varphi((1,x)g)dx.
\end{aligned}
\]
Therefore $\iota_{\nu,\mu}(M_{\mu,\nu}\varphi)
=
J_{\mu,\nu}(\iota_{\mu,\nu}\varphi)$, which proves the commutativity of the lower square.
\end{proof}

Note that under the Whittaker normalization in Section~\ref{sec:representations}, we have
\[
\xi(-1)R_{\mu,\nu}
=
\gamma(1,\xi^{-1},\psi)^{-1}J_{\mu,\nu}.
\]

We next come to the triple product setting. For $1\leq i\leq 3$, we simplify the notations as
\[
(\pi_i,\cH_i):=(\pi_{\mu_i,\nu_i},\cH_{\mu_i,\nu_i}),\qquad q_i:=q_{\mu_i,\nu_i}:\cS(F^2)\longrightarrow \cH_i,
\]
and denote by $\omega_i:=\mu_i\nu_i$ the central character of $\pi_i$. Again, we use the notations $$\bm{\chi}^{\#}:=(\mu_1,\nu_1;\mu_2,\nu_2;\mu_3,\nu_3),\qquad \xi_i:=\mu_i \nu_i^{-1}.$$ From now on, we assume that all $\mu_i,\nu_i$ are unitary and $\omega_1\omega_2\omega_3=\mathbf{1}$, so that
\[
\dim\Hom_{G^\Delta}(\pi_1\boxtimes \pi_2\boxtimes \pi_3,\mathbf{1})=1.
\]
We next put
\begin{equation}\label{eq:trilinear-kernel-characters}
\alpha_1:=\nu_1\mu_2\mu_3|\cdot|^{-1/2},\qquad \alpha_2:=\mu_1\nu_2\mu_3|\cdot|^{-1/2},\qquad \alpha_3:=\mu_1\mu_2\nu_3|\cdot|^{-1/2}.
\end{equation}
The assumption $\omega_1\omega_2\omega_3=\mathbf{1}$ gives
\begin{equation}\label{eq:kernel-character-identities}
\alpha_2\alpha_3=\xi_1|\cdot|^{-1},\qquad \alpha_3\alpha_1=\xi_2|\cdot|^{-1},\qquad \alpha_1\alpha_2=\xi_3|\cdot|^{-1}.
\end{equation}

Let
\[
\Omega:=\left\{(z_1,z_2,z_3)\in(F^2)^3\mid (z_2\wedge z_3)(z_3\wedge z_1)(z_1\wedge z_2)\neq0\right\},
\]
which is an open subset of $F^6$. We define a smooth function
\[
K_{\bm{\chi}^{\#}}:\Omega\longrightarrow \CC,\qquad K_{\bm{\chi}^{\#}}(z_1,z_2,z_3):=\alpha_1(z_2\wedge z_3)\alpha_2(z_3\wedge z_1)\alpha_3(z_1\wedge z_2),\qquad z_i\in F^2
\]
and regard it as a function on $F^6$ via extension by zero. The local integrability range of $|x|^s$ at $x=0$ is $\Re(s)>-1$, and our current setting is $\alpha_i = \alpha_i^\circ |\cdot|^{-1/2}$ with $\alpha_i^{\circ}$ unitary, so we can easily find that $K_{\bm{\chi}^{\#}}$ for our current $\bm{\chi}^{\#}$ is locally integrable (also see \cite[\S 10.2]{venkatesh-wang}). We then regard $K_{\bm{\chi}^{\#}}$ as a distribution on $\cS(F^6)$ via 
\[ \cS(F^6)\ni \Phi \longmapsto \langle K_{\bm{\chi}^{\#}},\Phi\rangle :=\int_{F^6}K_{\bm{\chi}^{\#}}(z)\Phi(z)dz.\]

For $\varphi_i\in\cH_i$, the function
\[
(z_1,z_2,z_3)\longmapsto K_{\bm{\chi}^{\#}}(z_1,z_2,z_3)\varphi_1(z_1)\varphi_2(z_2)\varphi_3(z_3)
\]
is $(-2)$-homogeneous in each variable separately. We therefore define
\[
L_{\bm{\chi}^{\#}}(\varphi_1,\varphi_2,\varphi_3):=
\int_{\PP^1(F)^3}K_{\bm{\chi}^{\#}}(z_1,z_2,z_3)\varphi_1(z_1)\varphi_2(z_2)\varphi_3(z_3)
\,d^\PP z_1\,d^\PP z_2\,d^\PP z_3,
\]
which also converges absolutely.

\begin{lemma}\label{lemma:homogeneous-trilinear-form}
The above distribution $\Phi\longmapsto \langle K_{\bm{\chi}^{\#}},\Phi\rangle$ descends to a nonzero $G^\Delta$-invariant trilinear form $L_{\bm{\chi}^{\#}}$ on $\cH_1\otimes \cH_2\otimes \cH_3$ along the quotient map
\[
q_{\bm{\chi}^{\#}}=q_1\otimes q_2\otimes q_3:\cS(F^2)\otimes\cS(F^2)\otimes\cS(F^2)\longrightarrow\cH_1\otimes\cH_2\otimes\cH_3.
\]
More precisely, for $\Phi_i\in\cS(F^2)$, we have
\[
\langle K_{\bm{\chi}^{\#}},\Phi_1\otimes\Phi_2\otimes\Phi_3\rangle
=
L_{\bm{\chi}^{\#}}\bigl(q_1(\Phi_1),q_2(\Phi_2),q_3(\Phi_3)\bigr).
\]
\end{lemma}

\begin{proof}
The identities \eqref{eq:kernel-character-identities} give
\[
K_{\bm{\chi}^{\#}}(t_1z_1,t_2z_2,t_3z_3)
=
\prod_{i=1}^3|t_i|^{-1}\xi_i(t_i)\,
K_{\bm{\chi}^{\#}}(z_1,z_2,z_3).
\]
Applying the projective integration formula successively in the three variables gives
\[
\langle K_{\bm{\chi}^{\#}},\Phi_1\otimes\Phi_2\otimes\Phi_3\rangle
=
L_{\bm{\chi}^{\#}}\bigl(q_1(\Phi_1),q_2(\Phi_2),q_3(\Phi_3)\bigr).
\]
The $G^\Delta$-invariance follows directly from the transformation property of the kernel together with the definition of the $G$-actions on the homogeneous models. The nonvanishing can be seen by choosing the three functions with sufficiently small supports contained in the open set on which all three wedge products are nonzero.
\end{proof}

We define the Fourier transform $\cF_{\psi}^{\otimes3}(K_{\bm{\chi}^{\#}})$ of the distribution $K_{\bm{\chi}^{\#}}$ by
\[
\langle\cF_{\psi}^{\otimes3}(K_{\bm{\chi}^{\#}}),\Phi\rangle
:=
\langle K_{\bm{\chi}^{\#}},\cF_{\psi^{-1}}^{\otimes3}\Phi\rangle,\qquad \Phi\in\cS(F^6).
\]
With this convention, the Fourier transform of a distribution represented by a locally integrable function agrees with the Fourier transform of that function defined above. The following is a reformulation of the distributional Fourier-transform identity  of \cite[Lemma 11.4.2]{venkatesh-wang}, rewritten in our notations.

\begin{theorem}[Venkatesh--Wang]\label{theorem:venkatesh-wang-fourier-identity}
In the present unitary range, we have
\[
\cF_\psi^{\otimes3}(K_{\bm{\chi}^{\#}})
=c(\bm{\chi}^{\#})K_{w_{123}\bm{\chi}^{\#}}
\]
as distributions on $\cS(F^6)$, with
\[
c(\bm{\chi}^{\#})=\left(\gamma(1,\alpha_1\alpha_2\alpha_3|\cdot|,\psi)\prod_{i=1}^3\gamma(1,\alpha_i,\psi)\right)^{-1}.
\]
In other words, we have the following commutative diagram.
\[
\begin{tikzcd}
\cS(F^6)\arrow[rr,"\cF_{\psi^{-1}}^{\otimes 3}"] \arrow[rd,"{c(\bm{\chi}^{\#})K_{w_{123}\bm{\chi}^{\#}}}"'] && \cS(F^6) \arrow[ld, "K_{\bm{\chi}^{\#}}"] \\
    & \CC &
\end{tikzcd}
\]
\end{theorem}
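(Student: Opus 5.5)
The plan is to prove the identity by an invariance-plus-uniqueness argument, and then to compute the constant $c(\bm{\chi}^{\#})$ by iterated one-variable Tate integrals, following the structure of \cite[Lemma 11.4.2]{venkatesh-wang}. I would first deform $\bm{\chi}^{\#}$ in a complex family: replace $\alpha_i$ by $\alpha_i|\cdot|^{s_i}$ with $(s_1,s_2,s_3)$ in an open region. On that region $K_{\bm{\chi}^{\#}}$ is still locally integrable, and there the singular set $F^6\setminus\Omega$ can carry no distribution with the relevant homogeneity. Both sides of the identity are meromorphic families of tempered distributions, so it suffices to prove the identity for generic $(s_i)$. It then extends to the unitary range by continuation, since both sides are regular there: $K_{\bm{\chi}^{\#}}$ and $K_{w_{123}\bm{\chi}^{\#}}$ are locally integrable.

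\textbf{Step 1 (transformation law).} I will check two properties of $\cF_\psi$.
\begin{itemize}
\item Since $gz\wedge gw=\det(g)\,(z\wedge w)$, for $g\in G$ we have $\cF_\psi(\Phi(\cdot\, g))(z)=|\det g|^{-1}\cF_\psi(\Phi)(z\,g^{-\top\ast})$, where $g^{-\top\ast}$ denotes the symplectic adjoint, which equals $\det(g)^{-1}g$ on $F^2$.
\item Under scaling $z_i\mapsto t_iz_i$, a distribution homogeneous of character $\chi$ in the $i$-th variable is sent to one homogeneous of character $\chi^{-1}|\cdot|^{-2}$.
\end{itemize}
Combining these with the homogeneity of $K_{\bm{\chi}^{\#}}$ recorded in the proof of Lemma~\ref{lemma:homogeneous-trilinear-form} and with \eqref{eq:kernel-character-identities}, I expect $\cF_\psi^{\otimes3}(K_{\bm{\chi}^{\#}})$ to transform under $(F^\times)^3$ by $\prod_i\xi_i^{-1}(t_i)|t_i|^{-1}$ and under $G^\Delta$ by the same determinant character as $K_{w_{123}\bm{\chi}^{\#}}$. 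This is exactly the transformation law of $K_{w_{123}\bm{\chi}^{\#}}$. The verification is bookkeeping with $\omega_1\omega_2\omega_3=\mathbf{1}$.

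\textbf{Step 2 (uniqueness).} The group $G\times(F^\times)^3$ acts transitively on $\Omega$. Indeed, $G$ moves an independent pair $(z_1,z_2)$ to $(e_1,e_2)$, then $z_3=ae_1+be_2$ with $ab\neq0$, and scaling normalizes $a=b=1$ up to the residual diagonal torus, which is absorbed by adjusting $t_1,t_2$. Hence any distribution on $\Omega$ with the above transformation law is a smooth function, determined by its value at one point, so it is a multiple of $K_{w_{123}\bm{\chi}^{\#}}|_\Omega$. Distributions supported on $F^6\setminus\Omega$ with this law vanish for generic $(s_i)$ by an exponent comparison on each stratum: the strata where some $z_i\wedge z_j=0$, and where some $z_i=0$. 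Therefore $\cF_\psi^{\otimes3}(K_{\bm{\chi}^{\#}})=c\,K_{w_{123}\bm{\chi}^{\#}}$ with $c$ meromorphic in $(s_i)$. As an alternative to the stratum analysis, one could descend via the surjections $q_i$ and invoke Prasad's multiplicity one \cite{prasad}; this also identifies the resulting functional as a multiple of $L_{w_{123}\bm{\chi}^{\#}}$.

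\textbf{Step 3 (the constant; main obstacle).} I would compute $c$ by applying $\cF_{\psi}$ one variable at a time. Fix $z_1,z_2$ in general position. As a function of $z_3$, the kernel is $\alpha_1(z_2\wedge z_3)\alpha_2(z_3\wedge z_1)$. Changing coordinates to $(x,y)=(z_2\wedge z_3,\,z_3\wedge z_1)$ has Jacobian $|z_1\wedge z_2|^{-1}$. In these coordinates the symplectic Fourier transform in $z_3$ factors into two one-dimensional Tate transforms, producing $\gamma(1,\alpha_1,\psi)^{-1}\gamma(1,\alpha_2,\psi)^{-1}$ and the characters $\alpha_i^{-1}|\cdot|^{-1}$. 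Repeating the procedure in $z_2$ and then $z_1$ should produce the remaining factors $\gamma(1,\alpha_3,\psi)^{-1}$ and $\gamma(1,\alpha_1\alpha_2\alpha_3|\cdot|,\psi)^{-1}$, the latter arising when the Jacobian factor $|z_1\wedge z_2|^{-1}$ merges with $\alpha_3$. This is the genuinely delicate step. The interchanges of integration are only formal, so I would instead test against a product of characteristic functions of small neighbourhoods of $(e_1,e_2,e_1+e_2)$, where everything converges, and keep track of the signs $\alpha_i(-1)$ coming from $w\wedge z=-z\wedge w$. Finally, the computation is literally that of \cite[Lemma 11.4.2]{venkatesh-wang}, carried out there for $\mu_i\nu_i=\mathbf{1}$. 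It uses only the characters $\alpha_i$, not the individual $\mu_i,\nu_i$, so their calculation applies verbatim, and I would present it that way, checking agreement of normalizations.
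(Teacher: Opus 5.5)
Your proposal is correct, but it is organized differently from the paper.

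\textbf{The paper's route.} The paper's proof is essentially a citation. Venkatesh--Wang's Lemma 11.4.2 already supplies the whole distributional identity, with right-hand side $c(\bm{\chi}^{\#})$ times $\alpha_1^{-1}|\cdot|^{-1}(z_2\wedge z_3)\,\alpha_2^{-1}|\cdot|^{-1}(z_3\wedge z_1)\,\alpha_3^{-1}|\cdot|^{-1}(z_1\wedge z_2)$. The paper's only own input is the character identity $\alpha_i^{w_{123}}=\alpha_i^{-1}|\cdot|^{-1}$, which follows from $\omega_1\omega_2\omega_3=\mathbf{1}$ and turns that kernel into $K_{w_{123}\bm{\chi}^{\#}}$.

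\textbf{Your route.} You derive the \emph{shape} structurally: matching transformation laws under $G^\Delta\times(F^\times)^3$, transitivity on $\Omega$, and excluding distributions on the singular strata after a generic deformation (or via Prasad's multiplicity one after descending along $q$). You reserve the Venkatesh--Wang computation for the constant alone. This explains conceptually why the Fourier transform must be proportional to $K_{w_{123}\bm{\chi}^{\#}}$. It costs a deformation/continuation argument and an orbit analysis that the paper avoids, and it does not remove the dependence on Venkatesh--Wang.

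\textbf{Steps 1--2 are dispensable.} In the unitary range every intermediate kernel of your iterated partial Fourier transforms stays locally integrable. For instance, after the $z_3$-step the wedge characters are $\alpha_2^{-1}|\cdot|^{-1}$ on $z_2\wedge z_3$, $\alpha_1^{-1}|\cdot|^{-1}$ on $z_3\wedge z_1$ and $\alpha_1\alpha_2\alpha_3|\cdot|$ on $z_1\wedge z_2$, all of exponent $-1/2$. So Fubini plus the one-variable Tate identity make Step 3 rigorous by itself and produce the full identity, shape included. Along the way an extra pair $\gamma(1,\alpha_2,\psi)^{-1}\gamma(1,\alpha_2^{-1}|\cdot|^{-1},\psi)^{-1}$ appears, which collapses to a sign via $\gamma(1,\alpha_2,\psi)\gamma(0,\alpha_2^{-1},\psi)=\alpha_2(-1)$.

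\textbf{If you keep Step 2.} Matching its constant with the one read off from the Venkatesh--Wang kernel requires the two kernels to agree pointwise, not just to share a transformation law. That is exactly the identity $\alpha_i^{w_{123}}=\alpha_i^{-1}|\cdot|^{-1}$, so state it explicitly rather than leaving it to bookkeeping.

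\textbf{On applying Venkatesh--Wang here.} Your remark that their calculation involves only the $\alpha_i$ is the right justification for using it in this setting. In type II the $\alpha_i$ need not come from $\PGL_2$ data, since $\alpha_j\alpha_k|\cdot|=\xi_i$ (for $\{i,j,k\}=\{1,2,3\}$) need not be the square of a character. The paper's citation uses this point only implicitly.
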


Note that here we have $|\alpha_i^{-1}(x)|/|x|=|x|^{-1/2}$, so the kernels on both sides lie in the same locally integrable range.

\begin{proof}
By Lemma 11.4.2 of \cite{venkatesh-wang}, we have
\[
\cF_{\psi}^{\otimes3}(K_{\bm{\chi}^{\#}})(z_1,z_2,z_3)
=
c(\bm{\chi}^{\#})
\frac{\alpha_1^{-1}(z_2\wedge z_3)\alpha_2^{-1}(z_3\wedge z_1)\alpha_3^{-1}(z_1\wedge z_2)}
{|(z_2\wedge z_3)(z_3\wedge z_1)(z_1\wedge z_2)|}
\]
as distributions. Let $\alpha_i^{w_{123}}$ denote the quasi-character obtained from the inducing data $w_{123}\bm{\chi}^{\#}$. The assumption $\omega_1\omega_2\omega_3=\mathbf{1}$ gives
\[
\alpha_i^{w_{123}}=\alpha_i^{-1}|\cdot|^{-1},\qquad 1\leq i\leq3.
\]
Hence the expression on the right-hand side is precisely $c(\bm{\chi}^{\#})K_{w_{123}\bm{\chi}^{\#}}$, which proves the claim.
\end{proof}

Now we turn to the case in which we are interested, namely the order 2, type II case for unitary principal series. By Corollary \ref{corollary:order-2-canonical-form}, we can take three mutually distinct characters $\xi_1,\xi_2,\xi_3:F^\times\longrightarrow\CC^\times$ of order 2 and $\xi_1\xi_2\xi_3=\mathbf{1}$, together with a unitary character $\lambda:F^\times\longrightarrow\CC^\times$ satisfying $\lambda^2=\mathbf{1}$ (not necessarily nontrivial), such that our L-packet is restricted from 
\[\bm{\pi}^{\#}:=(\xi_1\times \mathbf{1})\boxtimes (\xi_2\times \mathbf{1})\boxtimes (\lambda\xi_3\times \lambda).\]
The related characters are 
\[
\bm{\chi}^{\#}:=(\xi_1,\mathbf{1};\xi_2,\mathbf{1};\lambda\xi_3,\lambda),\qquad
w_{123}\bm{\chi}^{\#}=(\mathbf{1},\xi_1;\mathbf{1},\xi_2;\lambda,\lambda\xi_3).
\]

\begin{lemma}\label{lemma:type-ii-fourier-constant}
    In the current case, we have $K_{w_{123}\bm{\chi}^{\#}}=K_{\bm{\chi}^{\#}}$ as functions on $\Omega$, and hence as distributions on $\cS(F^6)$. Moreover, the constant in the Venkatesh--Wang formula becomes
\begin{equation}\label{eq:type-ii-fourier-constant}
c(\bm{\chi}^{\#})
=\left(
\prod_{\eta\in\{\mathbf{1},\xi_1,\xi_2,\xi_3\}}
\gamma(\frac12,\lambda\eta,\psi)
\right)^{-1}.
\end{equation}

\end{lemma}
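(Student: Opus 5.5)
The proof is a direct computation with the explicit characters; no structural input beyond the definitions \eqref{eq:trilinear-kernel-characters} and the formula for $c(\bm{\chi}^{\#})$ in Theorem~\ref{theorem:venkatesh-wang-fourier-identity} is needed. The plan is to compute the kernel characters $\alpha_i$ for $\bm{\chi}^{\#}$ and for $w_{123}\bm{\chi}^{\#}$, compare them, and then substitute into the constant.

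First, for $\bm{\chi}^{\#}=(\xi_1,\mathbf{1};\xi_2,\mathbf{1};\lambda\xi_3,\lambda)$ we have $\mu_1=\xi_1$, $\nu_1=\mathbf{1}$, $\mu_2=\xi_2$, $\nu_2=\mathbf{1}$, $\mu_3=\lambda\xi_3$, $\nu_3=\lambda$. Plugging into \eqref{eq:trilinear-kernel-characters} gives
\[
\alpha_1=\xi_2\xi_3\lambda|\cdot|^{-1/2},\qquad \alpha_2=\xi_1\xi_3\lambda|\cdot|^{-1/2},\qquad \alpha_3=\xi_1\xi_2\lambda|\cdot|^{-1/2}.
\]
Using $\xi_1\xi_2\xi_3=\mathbf{1}$ and $\xi_i^2=\mathbf{1}$, these simplify to $\alpha_i=\lambda\xi_i|\cdot|^{-1/2}$.

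For $w_{123}\bm{\chi}^{\#}=(\mathbf{1},\xi_1;\mathbf{1},\xi_2;\lambda,\lambda\xi_3)$, the same formula gives $\alpha_1^{w}=\xi_1\lambda|\cdot|^{-1/2}$, and likewise $\alpha_2^{w}=\xi_2\lambda|\cdot|^{-1/2}$ and $\alpha_3^{w}=\xi_3\lambda|\cdot|^{-1/2}$. Thus $\alpha_i^{w}=\alpha_i$ for each $i$. Since the kernel $K$ depends only on the triple $(\alpha_1,\alpha_2,\alpha_3)$, we get $K_{w_{123}\bm{\chi}^{\#}}=K_{\bm{\chi}^{\#}}$ on $\Omega$. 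Both kernels are locally integrable and extended by zero, so the equality also holds as distributions.

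As a cross-check, the proof of Theorem~\ref{theorem:venkatesh-wang-fourier-identity} gives $\alpha_i^{w}=\alpha_i^{-1}|\cdot|^{-1}$. Here $\alpha_i^{-1}|\cdot|^{-1}=(\lambda\xi_i)^{-1}|\cdot|^{1/2-1}=\lambda\xi_i|\cdot|^{-1/2}$, because $\lambda\xi_i$ has order at most $2$. This agrees with the direct computation.

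For the constant, $\gamma(1,\alpha_i,\psi)=\gamma(1,\lambda\xi_i|\cdot|^{-1/2},\psi)=\gamma(\tfrac12,\lambda\xi_i,\psi)$, using the standard shift $\gamma(s,\chi|\cdot|^t,\psi)=\gamma(s+t,\chi,\psi)$. Next,
\[
\alpha_1\alpha_2\alpha_3|\cdot|=\lambda^3\xi_1\xi_2\xi_3|\cdot|^{-3/2+1}=\lambda|\cdot|^{-1/2},
\]
so $\gamma(1,\alpha_1\alpha_2\alpha_3|\cdot|,\psi)=\gamma(\tfrac12,\lambda,\psi)$. This is the $\eta=\mathbf{1}$ factor. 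Multiplying the four factors and inverting gives \eqref{eq:type-ii-fourier-constant}.

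There is no real obstacle in this argument. The only points to check carefully are the sign and shift conventions: which $\mu,\nu$ enter each $\alpha_i$, and the shift rule for $\gamma$. The shift rule holds because $L$- and $\varepsilon$-factors satisfy the same translation identity.
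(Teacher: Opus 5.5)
Your proposal is correct and follows the paper's own route: a direct computation showing $\alpha_i=\alpha_i^{w_{123}}=\lambda\xi_i|\cdot|^{-1/2}$ for each $i$, followed by substitution into the Venkatesh--Wang constant. You also spell out the shift rule $\gamma(s,\chi|\cdot|^t,\psi)=\gamma(s+t,\chi,\psi)$ and the computation $\alpha_1\alpha_2\alpha_3|\cdot|=\lambda|\cdot|^{-1/2}$, which the paper leaves implicit under ``direct computation.''
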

\begin{proof}
    Direct computation. The definition \eqref{eq:trilinear-kernel-characters} of $\alpha_i$ now gives
    \[\alpha_1 = \lambda\xi_2\xi_3|\cdot|^{-1/2},\qquad \alpha_2 = \lambda\xi_1\xi_3|\cdot|^{-1/2},\qquad \alpha_3 = \lambda\xi_1\xi_2|\cdot|^{-1/2}\]
    for the character $\bm{\chi}^{\#}$. The three corresponding characters  $\alpha_i^{w_{123}}$ for $w_{123}\bm{\chi}^{\#}$ then become
    \[
    \alpha_1^{w_{123}}=\lambda\xi_1|\cdot|^{-1/2},\qquad
    \alpha_2^{w_{123}}=\lambda\xi_2|\cdot|^{-1/2},\qquad
    \alpha_3^{w_{123}}=\lambda\xi_3|\cdot|^{-1/2}.
    \]
    By assumption $\xi_1\xi_2\xi_3=\mathbf{1}$, we have $\alpha_i^{w_{123}}=\alpha_i$ for $1\leq i\leq 3$, hence $K_{w_{123}\bm{\chi}^{\#}}=K_{\bm{\chi}^{\#}}$. The formula \eqref{eq:type-ii-fourier-constant} is then a direct computation.
\end{proof}

We now pass from the preceding identity of distributions to the standard induced model. Put
\[
(\cH_{\bm{\chi}^{\#}},q_{\bm{\chi}^{\#}}):=\bigotimes_{i=1}^3 (\cH_{\mu_i,\nu_i},q_{\mu_i,\nu_i}),\qquad
(\cH_{w_{123}\bm{\chi}^{\#}},q_{w_{123}\bm{\chi}^{\#}}):=\bigotimes_{i=1}^3 (\cH_{\nu_i,\mu_i},q_{\nu_i,\mu_i}),
\]
\[
\bm{\chi}:=\bm{\chi}^{\#}|_{T_M}=w_{123}\bm{\chi}^{\#}|_{T_M}.
\]
 Restricting the tensor products of the isomorphisms $\iota_{\mu_i,\nu_i}$ and $\iota_{\nu_i,\mu_i}$ to $M$, we obtain isomorphisms
\[
\iota_{\bm{\chi}^{\#}}:\cH_{\bm{\chi}^{\#}}\overset{\sim}{\longrightarrow}I(\bm{\chi}),\qquad
\iota_{w_{123}\bm{\chi}^{\#}}:\cH_{w_{123}\bm{\chi}^{\#}}\overset{\sim}{\longrightarrow}I(w_{123}\bm{\chi})=I(\bm{\chi}).
\]

\begin{lemma}\label{lemma:type-ii-common-homogeneous-model}
In the current case, we have set-theoretic equalities
\[
\cH_{\bm{\chi}^{\#}}=\cH_{w_{123}\bm{\chi}^{\#}},\qquad
q_{\bm{\chi}^{\#}}=q_{w_{123}\bm{\chi}^{\#}},
\]
and $\iota_{\bm{\chi}^{\#}}=\iota_{w_{123}\bm{\chi}^{\#}}$ after identifying the two targets with $I(\bm{\chi})$. Consequently, $L_{\bm{\chi}^{\#}}=L_{w_{123}\bm{\chi}^{\#}}$ as linear forms on the common homogeneous model.
\end{lemma}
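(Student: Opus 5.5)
The plan is to compare the two sides factor by factor, keeping track of which data depend on $(\mu_i,\nu_i)$ only through $\xi_i$ and which depend on the individual characters. Throughout, $\bm{\chi}^{\#}=(\xi_1,\mathbf{1};\xi_2,\mathbf{1};\lambda\xi_3,\lambda)$. So on the $i$-th factor the pair $(\mu_i,\nu_i)$ is replaced by $(\nu_i,\mu_i)$, and $\mu_i\nu_i^{-1}=\xi_i$ has order $2$. Note also that $w_{123}\in W_{\bm{\chi}}$ by Proposition~\ref{proposition:r-group-self-twists}, since $\xi_i^2=\mathbf{1}$ and $\xi_1\xi_2\xi_3=\mathbf{1}$. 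Hence $w_{123}\bm{\chi}=\bm{\chi}$, and $I(w_{123}\bm{\chi})$ is literally the space $I(\bm{\chi})$; this is the identification of targets used in the statement.

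First, the spaces. $\cH_{\mu_i,\nu_i}$ is cut out by $\varphi(tz)=|t|^{-1}\xi_i^{-1}(t)\varphi(z)$, while $\cH_{\nu_i,\mu_i}$ is cut out by $\varphi(tz)=|t|^{-1}\xi_i(t)\varphi(z)$. Since $\xi_i=\xi_i^{-1}$, these are the same subspace of functions on $F^2\setminus\{0\}$, so $\cH_{\bm{\chi}^{\#}}=\cH_{w_{123}\bm{\chi}^{\#}}$ as sets. Similarly, $q_{\mu_i,\nu_i}$ integrates against $\xi_i(t)|t|$ and $q_{\nu_i,\mu_i}$ against $\xi_i^{-1}(t)|t|$, which are equal. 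This gives $q_{\bm{\chi}^{\#}}=q_{w_{123}\bm{\chi}^{\#}}$.

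Next, the isomorphisms to $I(\bm{\chi})$. For a pure tensor $\varphi_1\otimes\varphi_2\otimes\varphi_3$ and $m=(g_1,g_2,g_3)\in M$ with common determinant $d$:
\begin{itemize}
\item $\iota_{\bm{\chi}^{\#}}$ produces $\bigl(\prod_i\mu_i(d)\bigr)|d|^{3/2}\prod_i\varphi_i(e_2g_i)$;
\item $\iota_{w_{123}\bm{\chi}^{\#}}$ produces $\bigl(\prod_i\nu_i(d)\bigr)|d|^{3/2}\prod_i\varphi_i(e_2g_i)$.
\end{itemize}
The ratio of the two prefactors is $\prod_i\xi_i(d)=(\xi_1\xi_2\xi_3)(d)=1$. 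So the two maps agree as functions on $M$, and hence $\iota_{\bm{\chi}^{\#}}=\iota_{w_{123}\bm{\chi}^{\#}}$ after restricting to $M$. The same computation applied to the actions $\pi_{\mu_i,\nu_i}$ versus $\pi_{\nu_i,\mu_i}$ shows that the two $M$-module structures on the common space coincide, which is consistent with this identification. This determinant bookkeeping is the only place where the restriction to $M$ and the relation $\xi_1\xi_2\xi_3=\mathbf{1}$ enter, so it is the step I would check most carefully. In particular, the identity holds only on $M$ and not on $M^{\#}$.

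Finally, the linear forms. $L_{\bm{\chi}^{\#}}$ and $L_{w_{123}\bm{\chi}^{\#}}$ are defined by the same recipe: integrate the kernel times $\varphi_1\varphi_2\varphi_3$ over $\PP^1(F)^3$ using the chart $z_i=(1,x_i)$. This recipe depends on nothing but the kernel. By Lemma~\ref{lemma:type-ii-fourier-constant}, $K_{w_{123}\bm{\chi}^{\#}}=K_{\bm{\chi}^{\#}}$ on $\Omega$. Therefore the two forms coincide on the common space $\cH_{\bm{\chi}^{\#}}$. Equivalently, by Lemma~\ref{lemma:homogeneous-trilinear-form}, both forms are the descent of the same distribution along the same surjection $q_{\bm{\chi}^{\#}}=q_{w_{123}\bm{\chi}^{\#}}$.
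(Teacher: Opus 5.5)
Your proposal is correct and follows essentially the same route as the paper: the factorwise equalities $\cH_{\mu_i,\nu_i}=\cH_{\nu_i,\mu_i}$ and $q_{\mu_i,\nu_i}=q_{\nu_i,\mu_i}$ come from $\xi_i=\xi_i^{-1}$, the determinant ratio $\prod_i\xi_i(\det m)=1$ on $M$ handles both the actions and the maps $\iota$, and the kernel identity $K_{w_{123}\bm{\chi}^{\#}}=K_{\bm{\chi}^{\#}}$ gives equality of the linear forms. One harmless inaccuracy is in your side remark: $\xi_1\xi_2\xi_3=\mathbf{1}$ is not used only in the determinant bookkeeping, since it also enters implicitly through Lemma~\ref{lemma:type-ii-fourier-constant}.
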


\begin{proof}
For each $1\leq i\leq3$, we have $\mu_i\nu_i^{-1}=\xi_i$ and $\xi_i^{-1}=\xi_i$. Hence both $\cH_{\mu_i,\nu_i}$ and $\cH_{\nu_i,\mu_i}$ consist of the smooth functions $\varphi:F^2\setminus\{0\}\longrightarrow\CC$ satisfying
\[
\varphi(tz)=|t|^{-1}\xi_i(t)\varphi(z),
\]
and
\[
q_{\mu_i,\nu_i}(\Phi)(z)
=\int_{F^\times}\Phi(tz)\xi_i(t)|t|\,d^\times t
=q_{\nu_i,\mu_i}(\Phi)(z).
\]
This verifies the set-theoretic equalities.

Let $m=(g_1,g_2,g_3)\in M$ and write $\det m:=\det g_i$. On a pure tensor in the common homogeneous model, the ratio between the actions attached to $w_{123}\bm{\chi}^{\#}$ and $\bm{\chi}^{\#}$ is
\[
\prod_{i=1}^3(\nu_i\mu_i^{-1})(\det m)=(\xi_1\xi_2\xi_3)^{-1}(\det m)=1.
\]
Thus the identity map is $M$-equivariant. The same computation (based on the definition of the maps $\iota_{\mu_i,\nu_i}$) gives
\[
\iota_{w_{123}\bm{\chi}^{\#}}(\varphi)(m)
=\prod_{i=1}^3(\nu_i\mu_i^{-1})(\det m)\,\iota_{\bm{\chi}^{\#}}(\varphi)(m)
=\iota_{\bm{\chi}^{\#}}(\varphi)(m).
\]
Finally, the preceding lemma gives $K_{w_{123}\bm{\chi}^{\#}}=K_{\bm{\chi}^{\#}}$, hence $L_{w_{123}\bm{\chi}^{\#}}=L_{\bm{\chi}^{\#}}$.
\end{proof}

We henceforth write
\[
\cH:=\cH_{\bm{\chi}^{\#}}=\cH_{w_{123}\bm{\chi}^{\#}},\qquad
q:=q_{\bm{\chi}^{\#}}=q_{w_{123}\bm{\chi}^{\#}},\qquad
L^{\mathrm{hom}}:=L_{\bm{\chi}^{\#}}=L_{w_{123}\bm{\chi}^{\#}},
\]
and
\[
\iota:=\iota_{\bm{\chi}^{\#}}=\iota_{w_{123}\bm{\chi}^{\#}}:\cH\overset{\sim}{\longrightarrow}I(\bm{\chi}).
\]
We transport the trilinear form to the standard induced model by setting
\[
L:=L^{\mathrm{hom}}\circ\iota^{-1}\in\Hom_{G^\Delta}(I(\bm{\chi}),\mathbf{1}).
\]

For $1\leq i\leq3$, applying Lemma~\ref{lemma:rank-one-fourier-intertwining}  with $(\mu,\nu,\psi)$ replaced by $(\nu_i,\mu_i,\psi^{-1})$, we get
\[
q_{\mu_i,\nu_i}\circ\cF_{\psi^{-1}}
=\gamma(1,\xi_i,\psi^{-1})^{-1}M_{\nu_i,\mu_i}\circ q_{\nu_i,\mu_i}.
\]
We therefore define
\[
R^{\mathrm{hom}}:=\bigotimes_{i=1}^3\left(\gamma(1,\xi_i,\psi^{-1})^{-1}M_{\nu_i,\mu_i}\right):\cH_{w_{123}\bm{\chi}^{\#}}\longrightarrow\cH_{\bm{\chi}^{\#}}.
\]
We can now summarize our discussion so far in the following commutative diagram:
\[
\begin{tikzcd}[column sep=huge,row sep=large]
\cS(F^6)
\arrow[d,"{q_{w_{123}\bm{\chi}^{\#}}}"']
\arrow[rrrr,"\cF_{\psi^{-1}}^{\otimes3}"]
&&&&
\cS(F^6)
\arrow[d,"q_{\bm{\chi}^{\#}}"]\\
\cH_{w_{123}\bm{\chi}^{\#}}
\arrow[rrrr,"{R^{\mathrm{hom}}}"]
\arrow[rrd,"{c(\bm{\chi}^{\#})L_{w_{123}\bm{\chi}^{\#}}}"']
&&&&
\cH_{\bm{\chi}^{\#}}
\arrow[lld,"L_{\bm{\chi}^{\#}}"]\\
&&\CC&&
\end{tikzcd}
\]
with
\[
\cF_{\psi^{-1}}(\Phi)(z):=\int_{F^2}\Phi(w)\psi^{-1}(z\wedge w)dw.
\]
(Note that $\psi^{-1}(x) = \psi(-x)$ and the self-dual measure $dx$ for $\psi^{-1}$ is the same as that for $\psi$.) The upper square is the tensor product of the three preceding identities, whereas the triangle follows from Theorem~\ref{theorem:venkatesh-wang-fourier-identity} in the form
\[
\langle K_{\bm{\chi}^{\#}},\cF_{\psi^{-1}}^{\otimes3}\Phi\rangle
=c(\bm{\chi}^{\#})\langle K_{w_{123}\bm{\chi}^{\#}},\Phi\rangle.
\]
Using the preceding Lemma~\ref{lemma:type-ii-common-homogeneous-model}, the commutativity of the triangle then becomes
\begin{equation}\label{eq:homogeneous-trilinear-eigenvalue}
L^{\mathrm{hom}}\circ R^{\mathrm{hom}}=c(\bm{\chi}^{\#})L^{\mathrm{hom}}.
\end{equation}

The final remaining task is to identify $R^{\mathrm{hom}}$ with the normalized self-intertwining operator on the standard induced model. Let $f\in V_{\nu_i,\mu_i}$ and set $\phi(y):=f(\dot w_0^{-1}u(y))$. For $x\neq0$, we have
\[
\dot w_0^{-1}u(x)\dot w_0^{-1}u(y)=
\begin{bmatrix}
 x^{-1}&-1\\
 &x
\end{bmatrix}
 \dot w_0^{-1}u(y-x^{-1}).
\]
Then a change of variables $t=x^{-1}$ gives
\[
(J_{\nu_i,\mu_i}f)(\dot w_0^{-1}u(y))
=\int_F\xi_i(t)|t|^{-1}\phi(y-t)dt.
\]
(See \S~\ref{subsec:restriction-compatibility} for the definitions of $J_{\nu_i,\mu_i},\ell_{\mu_i,\nu_i}^{\psi}$.) Consequently,
\[
\ell_{\mu_i,\nu_i}^{\psi}(J_{\nu_i,\mu_i}f)
=\left(\int_F\xi_i(t)|t|^{-1}\psi(-t)dt\right)
\ell_{\nu_i,\mu_i}^{\psi}(f)=\gamma(1,\xi_i,\psi^{-1})\ell_{\nu_i,\mu_i}^{\psi}(f).
\]
Here as usual in the context related to intertwining operators, the two integrals above are regarded as the regular value at $s=0$ of the meromorphic families
\[
\int_F\xi_i(t)|t|^{s-1}\phi(y-t)dt,\qquad \int_F\xi_i(t)|t|^{s-1}\psi(-t)\,dt\qquad (s\in \CC)
\]
of zeta integrals. Thus
\[
R_{\nu_i,\mu_i}:=\gamma(1,\xi_i,\psi^{-1})^{-1}J_{\nu_i,\mu_i}:V_{\nu_i,\mu_i}\longrightarrow V_{\mu_i,\nu_i}
\]
is really the Whittaker-normalized standard intertwining operator. By the comparison of $M_{\nu_i,\mu_i}, J_{\nu_i,\mu_i}$ in Lemma~\ref{lemma:rank-one-fourier-intertwining}, we get
\[
\iota_{\mu_i,\nu_i}\circ\gamma(1,\xi_i,\psi^{-1})^{-1}M_{\nu_i,\mu_i}
=R_{\nu_i,\mu_i}\circ\iota_{\nu_i,\mu_i}.
\]

Taking the tensor product over $1\leq i\leq3$ and restricting to $M$ gives the commutative diagram
\[
\begin{tikzcd}[column sep=huge]
\cH_{w_{123}\bm{\chi}^{\#}}
\arrow[d,"{\iota_{w_{123}\bm{\chi}^{\#}}}"']
\arrow[rrrr,"{R^{\mathrm{hom}}}"]
&&&&
\cH_{\bm{\chi}^{\#}}
\arrow[d,"\iota_{\bm{\chi}^{\#}}"]\\
I(w_{123}\bm{\chi})
\arrow[rrrr,"{R(w_{123},w_{123}\bm{\chi})}"]
&&&&
I(\bm{\chi})
\end{tikzcd}
\]
by the restriction compatibility Lemma~\ref{lemma:normalized-intertwining-restriction}.
In the current type~II case, $W_{\bm{\chi}}=\{1,w_{123}\}$ and $W_{\bm{\chi}}^0=\{1\}$. Let $r_{123}$ be the image of $w_{123}=w_1w_2w_3$ in $R_{\bm{\chi}}$, so that $r_{123}$ is the unique nontrivial element of $R_{\bm{\chi}}$. Since $w_{123}\bm{\chi}=\bm{\chi}$, the bottom arrow in the above diagram is the self-intertwining operator
\[
R(w_{123},w_{123}\bm{\chi})= R(r_{123},\bm{\chi}):I(\bm{\chi})\longrightarrow I(\bm{\chi}).
\]
Under the common identifications fixed above, the commutativity of the diagram reads
\begin{equation}\label{eq:homogeneous-whittaker-intertwining-comparison}
\iota\circ R^{\mathrm{hom}}=R(r_{123},\bm{\chi})\circ\iota.
\end{equation}

\begin{proposition}\label{proposition:type-ii-period-eigenvalue}
For the trilinear form $L=L^{\mathrm{hom}}\circ\iota^{-1}$ defined above, we have
\[
L\circ R(r_{123},\bm{\chi})=c(\bm{\chi}^{\#})L.
\]
\end{proposition}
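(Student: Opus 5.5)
The statement follows by transporting the identity \eqref{eq:homogeneous-trilinear-eigenvalue} from the homogeneous model to the induced model via $\iota$, using the comparison \eqref{eq:homogeneous-whittaker-intertwining-comparison}. By Lemma~\ref{lemma:type-ii-common-homogeneous-model}, both $\cH_{\bm{\chi}^{\#}}$ and $\cH_{w_{123}\bm{\chi}^{\#}}$ are the same space $\cH$. Hence $R^{\mathrm{hom}}$ is an endomorphism of $\cH$, and $L^{\mathrm{hom}}=L_{\bm{\chi}^{\#}}=L_{w_{123}\bm{\chi}^{\#}}$ is a single linear form on it. The proof is then a short formal computation:
\[
L\circ R(r_{123},\bm{\chi})
=L^{\mathrm{hom}}\circ\iota^{-1}\circ R(r_{123},\bm{\chi})
=L^{\mathrm{hom}}\circ R^{\mathrm{hom}}\circ\iota^{-1}
=c(\bm{\chi}^{\#})\,L^{\mathrm{hom}}\circ\iota^{-1}
=c(\bm{\chi}^{\#})\,L .
\]
The second equality rewrites \eqref{eq:homogeneous-whittaker-intertwining-comparison} as $\iota^{-1}\circ R(r_{123},\bm{\chi})=R^{\mathrm{hom}}\circ\iota^{-1}$. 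The third equality is \eqref{eq:homogeneous-trilinear-eigenvalue}.

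The substantive point is justifying \eqref{eq:homogeneous-trilinear-eigenvalue} on all of $\cH$, not only on the image of pure tensors of Schwartz functions. I would argue as follows. Since each $q_i$ is surjective, $q=q_1\otimes q_2\otimes q_3$ maps $\cS(F^2)^{\otimes 3}\subset\cS(F^6)$ onto $\cH$. For $\Phi=\Phi_1\otimes\Phi_2\otimes\Phi_3$, apply Lemma~\ref{lemma:homogeneous-trilinear-form} together with the upper square of the diagram, i.e.\ $q\circ\cF_{\psi^{-1}}^{\otimes3}=R^{\mathrm{hom}}\circ q$, which is the tensor product of the rank-one identities from Lemma~\ref{lemma:rank-one-fourier-intertwining}. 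This gives
\[
L^{\mathrm{hom}}(R^{\mathrm{hom}}q\Phi)=\langle K_{\bm{\chi}^{\#}},\cF_{\psi^{-1}}^{\otimes3}\Phi\rangle .
\]
Here we use that $\cF_{\psi^{-1}}^{\otimes3}\Phi$ is again a pure tensor. Theorem~\ref{theorem:venkatesh-wang-fourier-identity} then gives
\[
\langle K_{\bm{\chi}^{\#}},\cF_{\psi^{-1}}^{\otimes3}\Phi\rangle=c(\bm{\chi}^{\#})\langle K_{w_{123}\bm{\chi}^{\#}},\Phi\rangle .
\]
By Lemma~\ref{lemma:type-ii-fourier-constant}, the right-hand side equals $c(\bm{\chi}^{\#})L^{\mathrm{hom}}(q\Phi)$. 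Surjectivity of $q$ on pure tensors (by linearity) then yields the identity on $\cH$.

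The main obstacle is regularity. The rank-one identities in Lemma~\ref{lemma:rank-one-fourier-intertwining} are identities of meromorphic families, and the integrals defining $q_{\nu_i,\mu_i}$ and $M_{\nu_i,\mu_i}$ sit at the boundary unitary parameter $\xi_i$. One must check that these families are regular at the given parameter, so that the diagram specializes. Here $\xi_i$ is a nontrivial quadratic character, so $\gamma(1,\xi_i,\psi^{-1})$ is finite and nonzero. Moreover, $R_{\nu_i,\mu_i}$ is regular because $\xi_i\neq\mathbf{1}$ and $\xi_i$ is unitary. Also, $q_{\mu_i,\nu_i}$ converges absolutely since $\Re=0>-1$. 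So all displayed maps are regular and the specialization is legitimate.

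With the proposition established, Theorem~\ref{theorem:order-2-main-theorem}(2) follows exactly as in the type I argument. It remains only to note that $c(\bm{\chi}^{\#})=s$: by Lemma~\ref{lemma:type-ii-fourier-constant}, $\gamma(\frac12,\lambda\eta,\psi)=\varepsilon(\frac12,\lambda\eta,\psi)\,L(\frac12,\lambda\eta)/L(\frac12,\lambda\eta)=\varepsilon(\frac12,\lambda\eta,\psi)$ for quadratic $\lambda\eta$, and the product of these signs is its own inverse.
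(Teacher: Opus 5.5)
Your argument is correct and is essentially the paper's proof: the paper likewise combines the homogeneous eigenvalue identity $L^{\mathrm{hom}}\circ R^{\mathrm{hom}}=c(\bm{\chi}^{\#})L^{\mathrm{hom}}$ with the comparison $\iota\circ R^{\mathrm{hom}}=R(r_{123},\bm{\chi})\circ\iota$, and then cancels the isomorphism $\iota$. Your extra verification of the homogeneous identity reproduces the diagram argument the paper gives just before the proposition, with somewhat more explicit attention to regularity at the unitary parameter; that argument uses the rank-one Fourier squares, the Venkatesh--Wang identity on pure tensors, $K_{w_{123}\bm{\chi}^{\#}}=K_{\bm{\chi}^{\#}}$, and surjectivity of $q$.
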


\begin{proof}
Equations \eqref{eq:homogeneous-trilinear-eigenvalue} and \eqref{eq:homogeneous-whittaker-intertwining-comparison} give
\[
L\circ R(r_{123},\bm{\chi})\circ\iota
=L^{\mathrm{hom}}\circ R^{\mathrm{hom}}=c(\bm{\chi}^{\#})L^{\mathrm{hom}}=c(\bm{\chi}^{\#})L\circ\iota.
\]
Since $\iota$ is an isomorphism, the result follows.
\end{proof}

Now let us complete the proof of the order 2, type II case in the following Corollary.

\begin{corollary}\label{corollary:type-ii-distinguished-constituent}
Put (see Theorem \ref{theorem:order-2-main-theorem})
\[
Y:=\{\mathbf{1},\xi_1,\xi_2,\xi_3\},\qquad
s:=\prod_{\eta\in Y}\varepsilon(\frac12,\lambda\eta,\psi)\in \{\pm 1\}.
\]
Then we have
\[
L\circ R(r_{123},\bm\chi)=sL,\qquad L\in\Hom_{G^\Delta}(I(\bm\chi),\mathbf{1}),
\]
and consequently, 
\[
\dim\Hom_{G^\Delta}(\bm{\sigma}_s,\mathbf{1})=1,
\qquad
\dim\Hom_{G^\Delta}(\bm{\sigma}_{-s},\mathbf{1})=0.
\]
\end{corollary}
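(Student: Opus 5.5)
The plan is to combine Proposition~\ref{proposition:type-ii-period-eigenvalue} with the explicit form \eqref{eq:type-ii-fourier-constant} of $c(\bm{\chi}^{\#})$ from Lemma~\ref{lemma:type-ii-fourier-constant}. Two things remain: show that $c(\bm{\chi}^{\#})=s$, and then run the same eigenvalue argument used at the end of the type~I proof.

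First, the identity for an arbitrary $L$. By Prasad's multiplicity one, $\Hom_{G^\Delta}(I(\bm\chi),\mathbf{1})$ is one-dimensional. It is spanned by the form $L=L^{\mathrm{hom}}\circ\iota^{-1}$, which is nonzero by Lemma~\ref{lemma:homogeneous-trilinear-form}. So it suffices to prove $L\circ R(r_{123},\bm\chi)=sL$ for this particular $L$, and Proposition~\ref{proposition:type-ii-period-eigenvalue} reduces this to the numerical identity
\[
\prod_{\eta\in Y}\gamma(\tfrac12,\lambda\eta,\psi)^{-1}=\prod_{\eta\in Y}\varepsilon(\tfrac12,\lambda\eta,\psi).
\]

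To prove this identity, note that each $\chi:=\lambda\eta$ with $\eta\in Y$ is unitary and quadratic, so $\chi^{-1}=\chi$. Then
\[
\gamma(\tfrac12,\chi,\psi)=\varepsilon(\tfrac12,\chi,\psi)\,\frac{L(\tfrac12,\chi^{-1})}{L(\tfrac12,\chi)}=\varepsilon(\tfrac12,\chi,\psi).
\]
The $L$-factors cancel, and they are finite and nonzero because $\chi$ is unitary. The functional equation recalled after Theorem~\ref{theorem:order-2-main-theorem} gives $\varepsilon(\tfrac12,\chi,\psi)^2=\chi(-1)$, hence $\varepsilon(\tfrac12,\chi,\psi)^{-1}=\chi(-1)\,\varepsilon(\tfrac12,\chi,\psi)$. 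Taking the product over $Y$ gives
\[
c(\bm{\chi}^{\#})=\Bigl(\prod_{\eta\in Y}(\lambda\eta)(-1)\Bigr)\cdot s=\lambda(-1)^4(\xi_1\xi_2\xi_3)(-1)\cdot s=s.
\]
This uses $\xi_1\xi_2\xi_3=\mathbf{1}$, exactly the computation showing that $s$ is a sign.

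Step 1 (the derivation of \eqref{eq:type-ii-fourier-constant}) is where I expect the main obstacle, since the lemma leaves it as a "direct computation". I would check it as follows. First, $\alpha_1=\lambda\xi_2\xi_3|\cdot|^{-1/2}=\lambda\xi_1|\cdot|^{-1/2}$, and similarly $\alpha_2=\lambda\xi_2|\cdot|^{-1/2}$ and $\alpha_3=\lambda\xi_3|\cdot|^{-1/2}$. Second, $\alpha_1\alpha_2\alpha_3|\cdot|=\lambda|\cdot|^{-1/2}$. Third, $\gamma(1,\chi|\cdot|^{-1/2},\psi)=\gamma(\tfrac12,\chi,\psi)$. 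Together these produce exactly the four factors indexed by $Y$. A convention mismatch with Venkatesh--Wang, for instance $\psi$ versus $\psi^{-1}$, would change each factor only by $\chi(-1)$, and the product of these over $Y$ is $1$. So the sign is robust.

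Finally, with the Whittaker normalization we have $\bm{\sigma}_{\pm}=\ker\bigl(R(r_{123},\bm\chi)\mp1\bigr)$. For $v\in\bm{\sigma}_t$,
\[
tL(v)=L\bigl(R(r_{123},\bm\chi)v\bigr)=sL(v),
\]
so $L$ vanishes on $\bm{\sigma}_{-s}$. Since $L\neq0$ and $I(\bm\chi)=\bm{\sigma}_+\oplus\bm{\sigma}_-$, $L$ is nonzero on $\bm{\sigma}_s$. Together with the one-dimensionality of $\Hom_{G^\Delta}(I(\bm\chi),\mathbf{1})$, this gives $\dim\Hom_{G^\Delta}(\bm{\sigma}_s,\mathbf{1})=1$ and $\dim\Hom_{G^\Delta}(\bm{\sigma}_{-s},\mathbf{1})=0$.
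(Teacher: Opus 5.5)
Your proposal is correct and follows essentially the same route as the paper: you reduce to Proposition~\ref{proposition:type-ii-period-eigenvalue}, identify $c(\bm\chi^{\#})$ with $s$ via $\gamma(\tfrac12,\lambda\eta,\psi)=\varepsilon(\tfrac12,\lambda\eta,\psi)$ for the quadratic characters $\lambda\eta$ together with $s^2=1$ (your factorwise identity $\varepsilon^{-1}=\chi(-1)\varepsilon$ amounts to the same fact), and then run the eigenvalue argument on $\bm\sigma_{\pm}$. You also re-derive \eqref{eq:type-ii-fourier-constant} and check that the sign is insensitive to $\psi$ versus $\psi^{-1}$; both checks are correct but go beyond what the paper writes out.
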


\begin{proof}
Since $(\lambda\eta)^2=\mathbf{1}$ for $\eta\in Y$, the definition of the local gamma factor gives
\[
\gamma(\frac12,\lambda\eta,\psi)
=\varepsilon(\frac12,\lambda\eta,\psi).
\]
Thus from \eqref{eq:type-ii-fourier-constant} and the fact $s^2=1$ (see remarks under Theorem \ref{theorem:order-2-main-theorem}), we get 
\[
c(\bm\chi^{\#})
=\left(\prod_{\eta\in Y}\varepsilon(\frac12,\lambda\eta,\psi)\right)^{-1}
=s^{-1}=s.
\]
The first assertion then follows from Proposition~\ref{proposition:type-ii-period-eigenvalue}. For $t\in\{\pm1\}$ and $v\in\bm{\sigma}_t$, Section~\ref{sec:representations} has explained $R(r_{123},\bm\chi)v=t^{-1}v=tv$, so that $(t-s)L(v)=0$. Thus a nonzero $L$ vanishes on $\bm{\sigma}_{-s}$ and is then nonzero on $\bm{\sigma}_s$. 
\end{proof}
% ===== End 05_type 2.tex =====

% ===== Begin 06_references.tex =====

% ===== End 06_references.tex =====

\end{document}